\pdfoutput=1
\RequirePackage{ifpdf}
\ifpdf 
\documentclass[pdftex]{sigma}
\else
\documentclass{sigma}
\fi

\usepackage{tikz,tikz-cd,mathrsfs,dsfont,tensor}

\newcommand{\N}{\mathbb{N}}

\newcommand{\K}{\Bbbk} 
\newcommand{\aff}{\mathbb{A}} 


\newcommand{\mc}[1]{\mathcal{#1}}
\newcommand{\mf}[1]{\mathfrak{#1}}
\newcommand{\mr}[1]{\mathrm{#1}}
\newcommand{\ms}[1]{\mathsf{#1}}
\newcommand{\mx}[1]{\mathscr{#1}}


\newcommand{\1}{\mathds{1}} 
\newcommand{\op}{\mathrm{op}} 

\newcommand{\Stk}{\mathsf{St}_\K}
\newcommand{\Algst}{\mathsf{AlgSt}_\K}
\newcommand{\bal}{\mathcal{B}^\mathrm{bal}_{0,n}(G)}
\newcommand{\balk}{\mathcal{B}^\mathrm{bal}_{0,k}(G)}
\newcommand{\balN}{\mathcal{B}^\mathrm{bal}_{0,N}(G)}
\newcommand{\balNsm}{\mathcal{B}^\mathrm{sm}_{0,N}(G)}
\newcommand{\balsm}{\mathcal{B}^\mathrm{sm}_{0,n}(G)}
\newcommand{\balc}{\mathcal{B}^\mathrm{bal}_{0,\boldsymbol{c}}(G)}

\newcommand{\balmv}{\mathcal{B}^\mathrm{bal}_{0,\boldsymbol{\mf{m}}(v)}(G)}

\newcommand{\balmvsm}{\mathcal{B}^\mathrm{sm}_{0,\boldsymbol{\mf{m}}(v)}(G)}
\newcommand{\balmwsm}{\mathcal{B}^\mathrm{sm}_{0,\boldsymbol{\mf{m}}(w)}(G)}

\newcommand{\Kstk}{K_0(\Algst)}
\newcommand{\Ksnmod}{K_0^{\boldsymbol{S}_n}({\Algst})}
\newcommand{\Ksmod}{K_0^{\ms{S}}({\Algst}/ B)}
\newcommand{\KsmodK}{K_0^{\ms{S}}({\Algst})}
\newcommand{\Ksmodr}{K_0^{\ms{S}, r}({\Algst}/B)}
\newcommand{\BG}{{\mathcal{B}G}}
\newcommand{\cyinst}{\overline{\mathcal{I}}_{\boldsymbol{\mu}}(\BG)}

\numberwithin{equation}{section}

\newtheorem{Theorem}{Theorem}[section]
\newtheorem*{Theorem*}{Theorem}
\newtheorem{Corollary}[Theorem]{Corollary}
\newtheorem{Lemma}[Theorem]{Lemma}
\newtheorem{Proposition}[Theorem]{Proposition}
 { \theoremstyle{definition}
\newtheorem{Definition}[Theorem]{Definition}

\newtheorem{Example}[Theorem]{Example}
\newtheorem{Remark}[Theorem]{Remark}
\newtheorem{Notation}[Theorem]{Notation} }

\begin{document}
\allowdisplaybreaks

\renewcommand{\thefootnote}{}

\newcommand{\arXivNumber}{2212.14722}

\renewcommand{\PaperNumber}{107}

\FirstPageHeading

\ShortArticleName{On the Motivic Class of the Moduli Stack of Twisted $G$-Covers}

\ArticleName{On the Motivic Class of the Moduli Stack\\ of Twisted $\boldsymbol{G}$-Covers\footnote{This paper is a~contribution to the Special Issue on Enumerative and Gauge-Theoretic Invariants in honor of Lothar G\"ottsche on the occasion of his 60th birthday. The~full collection is available at \href{https://www.emis.de/journals/SIGMA/Gottsche.html}{https://www.emis.de/journals/SIGMA/Gottsche.html}}}

\Author{Massimo BAGNAROL~$^{\rm a}$ and Fabio PERRONI~$^{\rm b}$}

\AuthorNameForHeading{M.~Bagnarol and F.~Perroni}

\Address{$^{\rm a)}$~Dipartimento di Ingegneria e Architettura, Universit\`a degli Studi di Trieste,\\
\hphantom{$^{\rm a)}$}~via Valerio 6/1, 34127 Trieste, Italy}
\EmailD{\href{mailto:massimo.bagnarol@dia.units.it}{massimo.bagnarol@dia.units.it}}

\Address{$^{\rm b)}$~Dipartimento di Matematica e Geoscienze, Universit\`a degli Studi di Trieste,\\
\hphantom{$^{\rm b)}$}~via Valerio 12/1, 34127 Trieste, Italy}
\EmailD{\href{mailto:fperroni@units.it}{fperroni@units.it}}

\ArticleDates{Received January 02, 2023, in final form December 19, 2023; Published online December 27, 2023}

\Abstract{We describe the class, in the Grothendieck group of stacks, of the stack of twisted $G$-covers of genus $0$ curves, in terms of the loci corresponding to covers over smooth bases.}

\Keywords{moduli spaces of covers; Grothendieck group of stacks}

\Classification{14D23; 14F45; 14H30}

\renewcommand{\thefootnote}{\arabic{footnote}}
\setcounter{footnote}{0}

\section{Introduction}
Moduli stacks of twisted stable maps have been introduced in the framework of Gromov--Witten theory
for orbifolds and for Deligne--Mumford stacks,
independently by Chen--Ruan \cite{CR02} and by Abramovich--Graber--Vistoli \cite{AGV08, AV02}. In the case where the target is the classifying stack $\BG$ of a~finite group~$G$,
a twisted stable map is called a twisted $G$-cover~\cite{ACV03}.
The moduli stack ${\mathcal{B}^\mathrm{bal}_{g,n}(G)}$ of balanced twisted stable maps to $\BG$ is isomorphic to
the stack of admissible $G$-covers, which has been extensively studied due to its several applications
(see, e.g., \cite{ACV03, FG03,HM82,JKK05, PJ95}).

The present
work originates from \cite{CLP11, CLP15,CLP16}
where the authors studied the classification problem for the connected components
of moduli spaces of Galois covers of smooth curves. The results obtained there give a complete description
of such components for any genus of the base curves when the Galois group is the dihedral group,
in the general case one only has a complete classification when the genus of the base curves is large enough.
In this article, we consider the compactifications of moduli spaces of Galois covers of curves
given by admissible covers, restricting our attention to the case where the genus of the base curves is $0$.
The aim is to investigate how the boundary can be used to obtain information about
the geometry of the moduli spaces.
Our main result (Theorem~\ref{mainthm}) gives a description of the class of $\bal$
in the Grothendieck group of stacks, i.e., of the \textit{motivic class} of $\bal$.
This description is obtained by stratifying the moduli stack as the disjoint union of locally closed
substacks $\mathcal B^{\rm bal}(G, \tilde \tau)$, which correspond to the $G$-covers
of a given combinatorial structure $\tilde \tau$, where $\tilde \tau$ is a gerby tree (cf.\ Definition \ref{gerbytree}).
The main technical step is Proposition \ref{mainprop}, where we show how these strata are
obtained by gluing together the loci in $\bal$ corresponding to covers over smooth bases.
In order to keep track of the actions of the symmetric groups on the marked points, these
gluing constructions correspond to two basic operations, the \textit{Day convolution}
(Definition \ref{Day}) and the \textit{composition} (Definition \ref{composition}).
As a result, we obtain a formula \eqref{maineq} that
expresses the motivic class of $\bal$
recursively in terms of
the loci $\mc{B}_k$, for $k\leq n$, corresponding to covers over smooth bases
(cf.\ Remark~\ref{remmainthm}).
Here we follow the strategy of \cite{GP06} (where the authors compute the Betti numbers of the moduli spaces
of stable maps to projective spaces), further extended in~\cite{BagnarolMN} to the case of
moduli spaces of stable maps to Grassmannians.
Over the field of complex numbers, by applying the Poincar\'e (or Serre) characteristic
\cite{BD07,GP06},
our formula allows to compute the Betti numbers of $\bal$ in terms of
the cohomology of $\mc{B}_k$, as a representation of the symmetric group $\boldsymbol{S}_k$, $k=3, \dots ,n$,
and the decomposition of $\mc{B}_k$ as union of the open and closed substacks
$\mc{B}_{\boldsymbol{c}} := \balc \cap \mc{B}_k$,
where $\boldsymbol{c}=(c_1, \dots , c_k)$ varies among the sequences of~$k$ conjugacy classes of~$G$,
and $\balc$ is the full sub-category of $\balk$ whose objects are the twisted $G$-covers
whose evaluation at the marked points is equal to $\boldsymbol{c}$ (cf.\ Definition~\ref{balc}).
Notice that $\mc{B}_{\boldsymbol{c}}$ is determined by $\mc{B}_k$, as a stack over
$\bar{\mathcal I}_{\boldsymbol{\mu}} (\BG)^k$ with respect to the evaluation morphism.
We will address the problem of computing effectively these invariants
of $\mc{B}_n$ in future works.

The structure of the article is as follows. In Section~\ref{recall}, we recall the definition and collect
basic properties of $\bal$. In Section~\ref{section3}, we define the Day convolution and the composition, and
we prove their main properties that will be used. In Section~\ref{section4}, following the definition of the Grothendieck group of
stacks from \cite{Eke09}, we introduce the Grothendieck group
of $\ms{S}$-modules, over which there is an induced composition
operation.
In Section~\ref{section5}, we define the notion of gerby tree~$\tilde \tau$ (following~\cite{AJT16}) and we show that the locus
in $\bal$ of covers of a fixed combinatorial type~$\tilde \tau$ is locally closed.
In Section~\ref{section recursive relations}, we prove our main results, Proposition~\ref{mainprop} and Theorem~\ref{mainthm}.
In the last Section~\ref{section7}, we show how these results can be used to compute the Hodge--Grothendieck
characteristic of $\bal$ in terms of $\mathcal{B}_k$ for $k=3, \dots , n$.

\subsection*{Conventions}
The symbol $\N$ denotes the set of nonnegative integers.
For each $n \in \N$, the symmetric group on~$n$ letters is denoted by $\boldsymbol{S}_n$.

We fix a universe, and we use the term ``small'' in reference to the chosen universe.
As usual, the category of small categories is denoted by $\ms{Cat}$, the category of small sets is denoted by $\ms{Set}$,
and $\boldsymbol{1}$ denotes the terminal category.
For all (enriched) categories $\ms{C}$ and $\ms{D}$,
$\ms{D}^\ms{C}$ denotes the (enriched) category of (enriched) functors from $\ms{C}$ to $\ms{D}$.
A $2$-category is a $\ms{Cat}$-enriched category, which is sometimes referred to as a strict $2$-category.

In order to avoid set-theoretic issues when considering toposes, by a stack over a base scheme~$T$
we mean a stack in groupoids over a suitable small sub-site of the big fppf site of~$T$ (constructed as in \cite[Section~IV.2.5]{SGA4IV}).

 For what concerns group actions on stacks, we follow \cite{Romagny}, in particular, according to \cite[Definition~2.1]{Romagny}
the actions that we will consider are all strict. This degree of generality is sufficient for our purposes, even though in several places
one could allow more general actions.

\section[The stack of twisted G-covers]{The stack of twisted $\boldsymbol{G}$-covers}\label{recall}

In this section, we recall the definition of the stack of balanced twisted $G$-covers of $n$-pointed genus $0$ curves,
$\bal$, following \cite{ACV03},
to which we refer for further details and properties of this stack.

Let $\K$ be a fixed algebraically closed field of characteristic $0$. By an algebraic $\K$-stack we mean an algebraic stack of finite type over $\K$ which has affine stabilizers.
The $2$-category of algebraic $\K$-stacks is denoted by $\Algst$, while $\Stk$ denotes the $2$-category of stacks over $\K$.

Let $G$ be a finite group. Our aim is to study the class of $\bal$
in the Grothendieck ring of algebraic $\K$-stacks $\Kstk$.

Roughly speaking, the objects of $\bal$ are twisted stable maps from twisted curves to $\BG$, the classifying stack of $G$.

\begin{Definition}\label{def twisted curve}
A \textit{twisted curve} over a $\K$-scheme $T$ is a morphism $\mathcal{C} \to T$, where
\begin{enumerate}\itemsep=0pt
\item[(1)] $\mathcal{C}$ is a Deligne--Mumford stack;
\item[(2)] $\mathcal{C} \to T$ is \'etale locally isomorphic to $[U/{\boldsymbol{\mu}}_r] \to \operatorname{Spec} (A)$, and $U$ is
\begin{itemize}\itemsep=0pt
\item either $\operatorname{Spec} (A[z])$, with ${\boldsymbol{\mu}}_r$ acting as $(\zeta , z) \mapsto \zeta \cdot z$,
\item or $\operatorname{Spec}(A[z,w]/(zw-t))$, for some $t\in A$, with ${\boldsymbol{\mu}}_r$ acting as $(\zeta , (z,w)) \mapsto (\zeta \cdot z , \zeta^{a}\cdot w)$,
for some $a\in (\mathbb{Z}/r\mathbb{Z})^\times$.
\end{itemize}
\end{enumerate}
Notice that in the second case the fiber over $t=0$ has a node in $U$, which is locally smoothable if and only if $a\equiv -1$ (mod $r$).
A twisted curve is called \textit{balanced} if at every node it is presented as above with $a\equiv -1$ (mod $r$).
\end{Definition}

\begin{Definition}\label{twnodnptdcurve}
Let $N$ be a finite set. A \textit{twisted nodal $N$-pointed curve} over a~$\K$-scheme $T$ is a~diagram
\[
\begin{tikzcd}
\Sigma \arrow[r, hook] & \mathcal{C} \arrow{d}\\
& C \arrow{d} \\
& \,T,
\end{tikzcd}
\]
where
\begin{itemize}\itemsep=0pt
\item[(1)] $\mathcal{C} \to T$ is a proper twisted curve over $T$;
\item[(2)] $\Sigma =\bigcup_{i\in N} \Sigma_i$, where the $\Sigma_i \subset \mathcal{C}$, for $i\in N$, are disjoint closed substacks
contained in the smooth locus of $\mathcal{C} \to T$;
\item[(3)] for every $i\in N$, $\Sigma_i \to T$ is an \'etale gerbes (see \cite[Section~3.3]{AGV08} for a definition);
\item[(4)] the morphism $\mathcal{C} \to C$ exhibits $C$ as the coarse moduli space of $\mathcal{C}$;
\item[(5)] for every $i\in N$, let $|\Sigma_i| \subset C$ be the image of $\Sigma_i$ (called the $i$-th marking
in $C$),
and let $C_{\rm gen}$ the complement of the nodes and the markings in $C$,
then the morphism $\mathcal{C} \to C$ is an isomorphism over $C_{\rm gen}$.
\end{itemize}
A twisted nodal $N$-pointed curve over $T$ is called \textit{balanced} if $\mathcal{C} \to T$ is a balanced twisted curve.

For a positive integer $n$, by a (twisted nodal) $n$-pointed curve we mean a (twisted nodal) $N$-pointed curve
where $N=\{ 1, \dots , n\}$.
\end{Definition}

\begin{Remark}\label{remark on twisted curves}
In the previous definition, one should think of $\Sigma$ as the marked points of the twisted curve, indeed we will refer to
$\Sigma_i$
 as the $i$-th marked point. The ``gerby" structure of the marked points arise naturally from Definition \ref{def twisted curve},
according to which, \'etale locally, $\Sigma_i \to T$ is isomorphic to $[\operatorname{Spec} (A[z]/(z))/ {\boldsymbol{\mu}}_r] \to \operatorname{Spec} (A)$, where
${\boldsymbol{\mu}}_r$ acts trivially.
The integer $r$ is called the \textit{index} of the marked point over $\operatorname{Spec} (A)$.
If $\Sigma_i$ has constant index $r$, then it is canonically banded by ${\boldsymbol{\mu}}_r$ \cite[Theorem~4.2.1]{AGV08}.
We refer to \cite[Section~4]{AGV08} and \cite{Olsson} for more details on twisted curves and their moduli.
\end{Remark}

Consider now the classifying stack $\BG$ of $G$, that is the category whose objects over a $\K$-scheme $T$ are principal $G$-bundles $P\to T$,
and morphisms are $G$-equivariant fiber diagrams of such bundles:
\[
\begin{tikzcd}
P_1 \arrow{r} \arrow{d} & P_2 \arrow{d} \\
T_1 \arrow{r} & \,T_2.
\end{tikzcd}
\]
$\BG$ is a global quotient stack with presentation $\BG=[ \operatorname{Spec} (\K) /G ]$ and coarse moduli space $\operatorname{Spec} (\K)$.

\begin{Definition}\label{twiste_Gcover_Npointed_curve}
Let $N$ be a finite set, and let $T$ be a $\K$-scheme.
A \textit{twisted $G$-cover of an $N$-pointed curve of genus $g$} over $T$ consists of a commutative diagram
\[
\begin{tikzcd}
\Sigma\arrow[r, hook] & \mathcal{C} \arrow{r} \arrow{d} & \BG \arrow{d} \\
& C \arrow{r} \arrow{d} & \operatorname{Spec} (\K) \\
& T &
\end{tikzcd}
\]
such that the following conditions are satisfied:
\begin{itemize}\itemsep=0pt
\item[(i)] $(\mathcal{C}\to C \to T, \Sigma)$ is a twisted nodal $N$-pointed curve over $T$;
\item[(ii)] for any $i\in N$ and for $|\Sigma_i |$ being the image of $\Sigma_i$ under the morphism $\mathcal{C} \to C$,
then $(C\to T, (|\Sigma_i|)_{i\in N})$
is a stable $N$-pointed curve of genus $g$;
\item[(iii)] the morphism $\mathcal{C} \to \BG$ is representable.
\end{itemize}
A twisted $G$-cover is called \textit{balanced} if the corresponding twisted curve is balanced.
\end{Definition}
\begin{Remark}
To justify the name``twisted $G$-cover" in the previous definition, notice that the morphism $\mathcal{C} \to \BG$
corresponds to a principal $G$-bundle $P\to \mathcal{C}$. Moreover, the representability condition~(iii) implies that
$P\to T$ is a projective nodal curve (see \cite[Lemma~2.2.1]{ACV03} for details), so $P\to C$ is a $G$-cover in the usual sense,
which is ramified (at most) over the marked points~$|\Sigma_i|$, $i\in N$, and the nodes of~$C$.
\end{Remark}

We have now introduced all the objects needed to define the stack we are interested in.
Consider the $2$-category whose objects over a $\K$-scheme $T$ are balanced twisted $G$-covers of an
$N$-pointed curve of genus $0$ over $T$,
$1$-morphisms are given by fiber diagrams, and {$2$-morphisms are given by base-preserving natural transformations}.
As explained in \cite[Section~2.1.5]{ACV03}, this $2$-category is equivalent to a category, obtained by replacing $1$-morphisms by their
$2$-isomorphism classes (see \cite[Proposition~4.2.2]{AV02} for the proof).
The category of balanced twisted $G$-covers thus obtained is denoted $\balN$, respectively
$\bal$ when $N=\{1,\dots, n\}$.

The following result is a special case of Theorems~2.1.7,~3.0.2, and of Corollary 3.0.5 in \cite{ACV03}
(see also~\cite{AV02}).
\begin{Theorem}
For any finite set $N$ of cardinality $\#N\geq 3$, $\balN$ is a smooth Deligne--Mumford stack of dimension $\#N-3$
and with projective coarse moduli space.
There is a morphism $\balN \to \overline{\mathcal M}_{0,N}$
which is flat, proper and quasi-finite~$(\overline{\mathcal M}_{0,N}$ is the moduli space of stable $N$-pointed curves of genus~$0)$.
\end{Theorem}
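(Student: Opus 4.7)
The plan is to deduce the statement from the general theory of moduli of twisted stable maps developed in \cite{AV02, ACV03}, specialized to the present situation: target $\BG$ with $G$ finite, base curves of genus $0$, and characteristic zero. First, I would construct the forgetful morphism $\balN \to \overline{\mathcal{M}}_{0,N}$ that sends a twisted $G$-cover $(\Sigma \hookrightarrow \mathcal{C} \to \BG)$ over $T$ to the underlying stable $N$-pointed curve $(C \to T, (|\Sigma_i|)_{i\in N})$; this is well-defined thanks to condition (ii) of Definition~\ref{twiste_Gcover_Npointed_curve}. Algebraicity and the Deligne--Mumford property of $\balN$ would then be imported from \cite{AV02}, using that representability of $\mathcal{C}\to \BG$ together with the stability of the coarse curve forces the automorphism group of any object to be finite.

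For smoothness and the dimension count, I would run a deformation-theoretic argument. Since $G$ is finite and $\mathrm{char}(\K)=0$, the stack $\BG$ is étale over $\operatorname{Spec}(\K)$, so for any map $f\colon \mathcal{C}\to \BG$ the relative cotangent complex vanishes and therefore deformations of $f$ with $\mathcal{C}$ fixed are unobstructed and unique. Combined with the smoothness of the stack of balanced twisted curves of genus $0$ with $N$ gerby markings, this shows that the forgetful morphism $\balN \to \overline{\mathcal{M}}_{0,N}$ is étale (after allowing for the gerby structure on markings, whose indices are locally constant and finite). Since $\overline{\mathcal{M}}_{0,N}$ is smooth of dimension $\#N-3$, the same follows for $\balN$.

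The core remaining points are quasi-finiteness, properness, and projectivity of the coarse space. Quasi-finiteness over $\overline{\mathcal{M}}_{0,N}$ is immediate from étaleness, or directly from the fact that balanced twisted $G$-covers of a fixed stable curve are classified by finitely many discrete data (compatible monodromies at nodes and marked points, plus a principal $G$-bundle on the normalization, up to automorphisms). Properness follows by the valuative criterion as in \cite[Theorem~3.0.2]{ACV03}: given a cover over the generic point of a DVR with a stable model of the coarse curve, one extends the $G$-bundle on the normalization and inserts at each node the unique cyclic orbifold structure compatible with the balanced condition, possibly after a finite base change. Finally, projectivity of the coarse moduli space follows because the induced map on coarse spaces is finite and $\overline{M}_{0,N}$ is projective. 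The main obstacle is the valuative criterion step: one must precisely match the inertia representations on the two branches at each new node so that the extended family stays inside $\balN$ (not merely in the non-balanced compactification), and it is this balancedness constraint that forces uniqueness of the extension after finite base change.
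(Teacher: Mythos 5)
The paper does not actually prove this statement: it is quoted as a special case of Theorems~2.1.7 and~3.0.2 and Corollary~3.0.5 of \cite{ACV03}, so your proposal has to be judged as a reconstruction of that argument. Most of your sketch is consistent with it, but one step is genuinely wrong: the claim that $\balN \to \overline{\mathcal M}_{0,N}$ is \emph{\'etale}. Your deformation argument (vanishing of deformations of a map to the \'etale stack $\BG$ with fixed source) shows that $\balN$ is \'etale over the stack of balanced twisted curves of genus $0$ with stable coarse curve, not over $\overline{\mathcal M}_{0,N}$. The further morphism from twisted curves to their coarse curves fails to be \'etale along the boundary: at a node with stabilizer $\boldsymbol{\mu}_r$, $r>1$, the miniversal smoothing parameter $\tilde t$ of the twisted node maps to the $r$-th power $t=\tilde t^{\,r}$ of the smoothing parameter of the coarse node, so $\balN \to \overline{\mathcal M}_{0,N}$ is ramified along boundary divisors whenever the local monodromy at a node is nontrivial. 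This is already visible for $G=\Z/2\Z$ and $\#N=4$, where the compactified space of double covers is a ramified (not \'etale) quasi-finite cover of $\overline{\mathcal M}_{0,4}$ near the boundary. Your parenthetical ``after allowing for the gerby structure on markings'' only addresses the markings, not the twisted nodes, which is exactly where \'etaleness breaks down; note that the statement itself claims only flat, proper and quasi-finite, and \'etaleness is false.

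The error propagates: smoothness, the dimension count, and (implicitly) flatness are all deduced in your proposal from \'etaleness over $\overline{\mathcal M}_{0,N}$, so as written those steps fail. They can be repaired along the lines of \cite{ACV03}: smoothness and the dimension $\#N-3$ follow from \'etaleness of $\balN$ over the smooth stack of balanced twisted stable curves of genus $0$ (smoothness of that stack is in \cite{AV02, Olsson}), quasi-finiteness can be argued exactly as you do via the finiteness of the discrete gluing/monodromy data, and flatness then follows by miracle flatness: a quasi-finite morphism from the smooth stack $\balN$ to the smooth scheme $\overline{\mathcal M}_{0,N}$ of the same dimension is automatically flat. With that substitution, your remaining steps (properness via the valuative criterion after finite base change, with the balanced condition pinning down the orbifold structure at inserted nodes, and projectivity of the coarse space because it is finite over the projective scheme $\overline{\mathcal M}_{0,N}$) match the cited results.
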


\begin{Remark}
In general $\mathcal B^{\rm bal}_{g,N}(G)$ is defined as the stack of balanced twisted $G$-covers of
$N$-pointed curves of genus $g$.
As proved in \cite[Theorem~4.3.2]{ACV03}, this stack is isomorphic to the stack of admissible $G$-covers $\mathcal{A}dm_{g,N}(G)$.
Throughout this article, it is useful to have in mind both points of view.
\end{Remark}

\begin{Notation}\label{balsm}
We will denote with $\balNsm$ the locus in $\balN$ of twisted $G$-covers of smooth $N$-pointed curves of genus $0$,
i.e., where the curves $\mathcal{C}$ in Definition \ref{twiste_Gcover_Npointed_curve} are smooth.
Notice that $\balNsm$ is an open sub-stack of $\balN$.
\end{Notation}

The stack $\balN$ comes equipped with evaluation maps
${\rm ev}^i_N$, for $i\in N$ (respectively ${\rm ev}^i_n$, if $N=\{ 1, \dots , n\}$), defined as follows.
Let $(f\colon \mathcal{C} \to \BG, \Sigma)$ be an object of $\balN$ over $T$,
the restriction of $f$ to $\Sigma_i$ yields the diagram
\begin{equation}\label{ev}
\begin{tikzcd}
\Sigma_i \arrow{r}{f_{|\Sigma_i}} \arrow{d} & \BG \\
\,T. &
\end{tikzcd}
\end{equation}
If $\Sigma_i$ has constant index~$r$, \eqref{ev} is an object of $\bar{\mathcal I}_{{\boldsymbol{\mu}}_r} (\BG)$ over $T$,
whose objects are, by definition, pairs $(\mathcal G, \phi)$ of a gerbe $\mathcal G$
banded by ${\boldsymbol{\mu}}_r$ and a representable morphism $\phi \colon \mathcal G \to \BG$
(as noticed in Remark~\ref{remark on twisted curves}, $\Sigma_i$ is banded by ${\boldsymbol{\mu}}_r$).

 Let $\bar{\mathcal I}_{\boldsymbol{\mu}} (\BG):= \sqcup_r \bar{\mathcal I}_{{\boldsymbol{\mu}}_r} (\BG)$,
it is called the \textit{stack of cyclotomic gerbes} in $\BG$ \cite[Definition~3.3.6]{AGV08}.
Then
\[
{\rm ev}^i_N \colon \ \balN \to \bar{\mathcal I}_{\boldsymbol{\mu}} (\BG)
\]
maps the object $(f\colon \mathcal{C} \to \BG, \Sigma)$ to $(\Sigma_i, f_{|\Sigma_i})$.

Notice that, as usual, the group $\boldsymbol{S}_N$ of bijections $N\to N$ (respectively
the symmetric group~$\boldsymbol{S}_n$, if $N=\{ 1, \dots , n\}$) acts naturally on $\balN$ by permutation of the marked points
(and this action is strict in the sense of \cite[Definition~1.3]{Romagny}).
Furthermore, defining
\[
{\rm ev}_N := \prod_{i\in N} {\rm ev}^i \colon \ \balN \to \bar{\mathcal I}_{\boldsymbol{\mu}} (\BG)^N ,
\]
we obtain a strict $\boldsymbol{S}_N$-morphism, where $\boldsymbol{S}_N$ acts on the target also by permutation.

\subsection{The coarse moduli space of the stack of cyclotomic gerbes}\label{recall_sub}
For lack of a suitable reference, we show here that the coarse moduli space of ${\overline{\mathcal{I}}_{\boldsymbol{\mu}}(\BG)}$,
which will be denoted ${\overline{I}_{\boldsymbol{\mu}}(\BG)}$ in the article, can be identified with the set of conjugacy classes of $G$.

To this aim, by \cite[Proposition~3.4.1]{AGV08} we have that ${\overline{\mathcal{I}}_{\boldsymbol{\mu}_r}(\BG)}$ is isomorphic to the
rigidification of the cyclotomic inertia stack
${{\mathcal{I}}_{\boldsymbol{\mu}_r}(\BG)}$ with respect to $\boldsymbol{\mu}_r$, hence its coarse moduli space is isomorphic to that of
${{\mathcal{I}}_{\boldsymbol{\mu}_r}(\BG)}$.
By \cite[Definition~3.1.1]{AGV08}, an object of ${{\mathcal{I}}_{\boldsymbol{\mu}_r}(\BG)}$ over a scheme $T$ is a pair $(\xi, \alpha)$,
where $\xi$ is a principal $G$-bundle over $T$ and $\alpha \colon (\boldsymbol{\mu}_r)_T \to \operatorname{Aut}_T (\xi)$ is an injective homomorphism.
An arrow from $(\xi, \alpha)$ over $T$ to $(\xi', \alpha')$ over $T'$ is an arrow $F\colon \xi \to \xi'$ such that the following diagram commutes:
\[
\begin{tikzcd}
(\boldsymbol{\mu}_r)_T \arrow{d}{\alpha} \arrow{r} & (\boldsymbol{\mu}_r)_{T'} \arrow{d}{\alpha'} \\
 \operatorname{Aut}_T (\xi) \arrow{d} & \arrow{l} \operatorname{Aut}_{T'} (\xi') \arrow{d} \\
 T \arrow {r} & \,T',
\end{tikzcd}
\]
where $(\boldsymbol{\mu}_r)_T \to (\boldsymbol{\mu}_r)_{T'}$ is induced by $T\to T'$ and ${\rm Id}_{\boldsymbol{\mu}_r}$,
 $\operatorname{Aut}_T (\xi) \leftarrow \operatorname{Aut}_{T'} (\xi')$ is the map induced by $F$ (it associates any $\phi' \in \operatorname{Aut}_{T'} (\xi')$ with the unique
$\phi \in \operatorname{Aut}_T (\xi)$ such that $F\circ \phi = \phi' \circ F$).
Notice that, if we fix a generator of $\boldsymbol{\mu}_r$, $\alpha$ corresponds to an element $\phi \in \operatorname{Aut}_T (\xi)$ of order $r$.

Over $\operatorname{Spec} (\K)$, every principal $G$-bundle $\xi$ is isomorphic to the trivial one, which we denote $\xi_0$.
Furthermore, we can identify $\operatorname{Aut} (\xi_0)$ with $G$ by associating any $g\in G$ with the automorphism $\phi_g \colon h \mapsto hg$.
An arrow $(\xi_0, \phi_{g_1}) \to (\xi_0, \phi_{g_2})$ is given by an automorphism $\phi_g\colon \xi_0 \to \xi_0$
such that $\phi_{g_1} = \phi_g^{-1} \circ \phi_{g_2} \circ \phi_g$.
From this description, we see that
\[
{{\mathcal{I}}_{\boldsymbol{\mu}_r}(\BG)}(\operatorname{Spec} (\K))/\cong \leftrightarrow \{ (\xi_0 , \phi_g) \mid g\in G \, \mbox{of order} \, r \}/\cong
 \leftrightarrow \{ g\in G \mid g \, \mbox{has order} \, r \}/G,
\]
where the quotient to the right is with respect to the action by conjugation.

We use this description to define certain open and closed loci in $\balN$, as follows.
\begin{Definition}\label{balc}
Let $\boldsymbol{c}=(c_i)_{i\in N}$ be a sequence of conjugacy classes of $G$,
i.e., $\boldsymbol{c} \in {\overline{I}_{\boldsymbol{\mu}}(\BG)}^N$.
Let $\balc$ be the full sub-category of $\balN$ whose objects are the twisted $G$-covers
$(\mathcal{C} \to T, f\colon \mathcal{C} \to \BG, \Sigma)$ of $\balN$ such that, over every geometric point $\Omega \in T$,
\[
{\rm ev}_N (\mathcal{C} \to T, \, f\colon \mathcal{C} \to \BG, \Sigma)_{|\Omega} = \boldsymbol{c} .
\]
As explained in \cite{AV02} at the end of Section~8.5, this last condition is locally constant on $T$, hence
$\balc$ is an open and closed sub-stack of $\balN$.
\end{Definition}

\section[The category of S-modules over a base]{The category of $\boldsymbol{\ms{S}}$-modules over a base}\label{section3}

In order to study the class of $\bal$ in $\Kstk$ (actually, in a slightly different ring that we will define later),
we need to translate the combinatorics of its boundary into relations within this ring.
As usual the boundary of $\bal$ corresponds to twisted $G$-covers $(f\colon \mathcal{C} \to \BG, \Sigma)$ such that
$\mathcal{C}$ is reducible,
say $\mathcal{C}= \mathcal{C}' \cup \mathcal{C}''$ with $\mathcal{C}'$ and $\mathcal{C}''$
being union of irreducible components of $\mathcal{C}$.
The restriction of $f$ to $\mathcal{C}'$ and $\mathcal{C}''$ yields twisted $G$-covers $(f' \colon \mathcal{C}' \to \BG , \Sigma')$
and $(f'' \colon \mathcal{C}'' \to \BG , \Sigma'')$ with fewer marked points. So it is expected that the class of the boundary of $\bal$
can be related to the classes of ${\mathcal{B}^\mathrm{bal}_{0,k}(G)}$ with $k<n$. This relation is the content of
our main results, Proposition~\ref{mainprop} and Theorem~\ref{mainthm}.
In this section, we develop a formalism that is needed when we express $f$ as the gluing of $f'$ and $f''$.
One complication arises here because the marked points $\Sigma_1, \dots , \Sigma_n$ are ``stacky points'',
which contain the information of the local monodromies of the $G$-cover $P\to C$.
Furthermore, we shall also take into account the stacks $\bal$, with
 the action of the symmetric group $\boldsymbol{S}_n$, altogether as $n\geq 3$
(this is, Example \ref{exSmod}).

An approach towards this direction can be recovered from the theory of colored operads, where many of the tools that we
use originated.
The connection with this theory has been already highlighted in \cite{Pet13}.
Here we adopt the point of view of~\cite{Kel05} according to which an operad is a monoid for a certain tensor product $\circ$
(the so called ``composition'', which in our context is defined in Definition~\ref{composition})
in a particular category of functors. However, we can not use directly the results therein for two reasons:
firstly, the category of stacks is a $2$-category, hence we need to work in the context of enriched categories;
secondly, as mentioned above, the gluing maps are more complicated in our setting due to the local monodromies.
Let us recall the main definitions and results, which we adapt for our purposes.
Since these results are effortlessly reworked versions of well-known facts in the non-enriched case, we do not present complete proofs, but we rather indicate which classical results they are adapted from.

We consider categories enriched over a fixed cosmos $(\mx{V},\otimes,\1)$, i.e., a symmetric monoidal closed category which is bicomplete. For simplicity, we assume that the unit $\1$ is also the terminal object of $\mx{V}$. The initial object of $\mx{V}$ is denoted by $\varnothing$. By Mac Lane's coherence theorem
\cite[Section~VII.2]{MacLane} (cf.\ also \cite[Theorem~8.4.1.]{YauD}), we may also assume that $(\mx{V},\otimes,\1)$ is a strict monoidal category, i.e., the associator and the unitors are identity morphisms. Unless otherwise stated, all categorical constructions (for instance, coends and Kan extensions) will be carried out in the $\mx{V}$-enriched setting. Such level of generality is needed because $\Stk$ and $\Algst$ are $2$-categories, i.e., they are enriched over the cosmos $(\ms{Cat},\times,\boldsymbol{1})$ of small categories. A~reference for all the tools of enriched category theory that we will use is~\cite{Kel05enr}.

Let $(\ms{E}, \diamond, I)$ be a bicomplete symmetric monoidal closed $\mx{V}$-category and $B$ an object of $\ms E$.
Our guiding example is given by
\[
(\mx{V},\otimes,\1) = (\ms{Cat},\times,\boldsymbol{1}) , \qquad (\ms{E},\diamond,I) = (\Stk, \times_\K, \operatorname{Spec}\K) , \qquad B = \cyinst ,
\]
where $\cyinst$ denotes the stack of cyclotomic gerbes in $\BG$.

In the following $\ms S$ denotes the free $\mx V$-category on the groupoid $\coprod_{n\in \N}\boldsymbol{S}_n$
and ${\ms S}_B \colon {\ms S} \to \ms E$ is the functor that sends $n \mapsto B^n$ and
$\sigma \in \boldsymbol{S}_n$ to the morphism that permutes the factors, where~$B^n$ denotes $B^{\diamond n}$.
\begin{Definition}\label{def S-mod/B}
An $\ms S$-module is a functor $X \colon {\ms S} \to {\ms E}$, that is a family of objects $X_n$ of $\ms E$, $n\in \N$,
endowed with an action of~$\boldsymbol{S}_n$.

A morphism of $\ms S$-modules $X$ and $Y$ is a natural transformation $f\colon X\Rightarrow Y$,
i.e., a family of morphisms $f_n \colon X_n \to Y_n$ compatible with the $\boldsymbol{S}_n$-actions.

For an $\ms S$-module $X$, we will sometimes say that
$X_n$ is the part of $X$ in degree $n$.
The category of $\ms S$-modules will be denoted by $(\ms S - {\rm mod})$.

An $\ms S$-module over $B$ is a pair $(X, e)$, where $X \colon {\ms S} \to {\ms E}$ is an $\ms S$-module
and $e \colon X \Rightarrow {\ms S}_B$ is a natural transformation.
A morphism of $\ms S$-modules over $B$, $(X, e), (Y, \varepsilon)$, is a morphism of $\ms S$-modules $f\colon X \to Y$
such that $\varepsilon \circ f = e$. The category of $\ms S$-modules over $B$ will be denoted by $(\ms S - {\rm mod}/B)$.

When $(\ms{E},\diamond,I) = (\Algst, \times_\K, \operatorname{Spec}\K)$,
for two $\ms S$-modules $X$, $Y$, we say that $X$ is a closed (resp.\ open) sub-$\ms S$-module of $Y$
if $X_n \subset Y_n$ is a full sub-category, the inclusion $X_n \hookrightarrow Y_n$ is a~closed
(resp. open)
immersion, and the $\boldsymbol{S}_n$-action on $Y$ restricts to that on $X$, for every~$n$.
\end{Definition}

The inclusion of $\boldsymbol{S}_n \times \boldsymbol{S}_m$ into $\boldsymbol{S}_{n+m}$ as $\operatorname{Aut}\{1,\dots,n\} \times \operatorname{Aut}\{n+1,\dots,n+m\}$ induces
a~tensor product $+$ for a $\mx{V}$-enriched symmetric monoidal structure on $\ms S$ with unit $0$,
denoted $+\colon \ms S \otimes \ms S \to \ms S$.

Given two $\ms S$-modules $X$ and $Y$, we consider the functor
\begin{align}\label{bifunctor}
(\ms S \otimes \ms S)^{\rm op} \otimes (\ms S \otimes \ms S) &\to (\ms S - {\rm mod}),\\
(m\otimes n)\otimes (k\otimes l) &\mapsto \ms S (m+n, \_) \odot \bigl( X_k \diamond Y_l \bigr) , \nonumber
\end{align}
where $\odot$ denotes the tensor product (also known as copower) in the $\mx{V}$-enriched category $\ms{E}$.
In the next definition we refer to~\cite{MacLane} for the definition and properties of coends.

\begin{Definition}[Day convolution]\label{Day}
For all $\ms S$-modules $X$ and $Y$, the convolution $X \ast Y$ is the functor
${\ms S} \to {\ms E}$ defined by the coend of \eqref{bifunctor}:
\begin{equation*} 
X \ast Y \colon \ k \mapsto \int^{m,n} {\ms S}(m+n, k) \odot \bigl( X_m \diamond Y_n \bigr) .
\end{equation*}
\end{Definition}

\begin{Remark}\label{rmk_conv}\quad
\begin{enumerate}\itemsep=0pt
\item There is a $\mx{V}$-natural isomorphism
\[
X \ast Y \cong \operatorname{Lan}_{+} \bigl( \diamond (X \otimes Y) \bigr) ,
\]
where $\operatorname{Lan}_+$ denotes the left Kan extension along $+ \colon {\ms S}\otimes {\ms S} \to {\ms S}$ \cite[Section~X.4]{MacLane}.

\item For $\ms S$-modules $X$ and $Y$, and for $k\in \N$, the part of $X \ast Y$ in degree $k$ has the following explicit formula
(cf.\ \cite[formula~(2.2)]{Kel05}):
\begin{equation}\label{Kel05}
(X\ast Y)_k = \coprod_{\substack{(m, n)\in \N^2 \\m+n=k}} {\rm Sh}(m, n) \odot X_m \diamond Y_n ,
\end{equation}
where ${\rm Sh}(m, n)$ is the set of $(m,n)$-shuffles in $\boldsymbol{S}_k$, i.e., the permutations
$\sigma \in \boldsymbol{S}_k$ such that $\sigma (1) < \dots < \sigma (m)$ and $\sigma (m+1)<\dots < \sigma (m+n)$.
\end{enumerate}
\end{Remark}

\begin{Proposition}\label{conv/B}
Let $(X, e)$ and $(Y, \varepsilon)$ be $\ms S$-modules over $B$. Then $e$ and $\varepsilon$ induce a natural transformation
$e\diamond \varepsilon \colon X\ast Y \Rightarrow {\ms S}_B$, therefore $(X\ast Y, e\diamond \varepsilon)$ is an $\ms S$-module over $B$.
\end{Proposition}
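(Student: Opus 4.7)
The plan is to exploit the identification $X \ast Y \cong \operatorname{Lan}_{+}\bigl(\diamond(X \otimes Y)\bigr)$ from Remark~\ref{rmk_conv}(1). By the universal property of the (enriched) left Kan extension, natural transformations $X \ast Y \Rightarrow \ms{S}_B$ are in bijection with natural transformations $\diamond(X \otimes Y) \Rightarrow \ms{S}_B \circ {+}$ of $\mx{V}$-functors $\ms{S} \otimes \ms{S} \to \ms{E}$, so it suffices to produce the latter.

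First I would apply the bifunctor $\diamond \colon \ms{E} \otimes \ms{E} \to \ms{E}$ to the product transformation $e \otimes \varepsilon \colon X \otimes Y \Rightarrow \ms{S}_B \otimes \ms{S}_B$, obtaining $\diamond(e \otimes \varepsilon) \colon \diamond(X \otimes Y) \Rightarrow \diamond(\ms{S}_B \otimes \ms{S}_B)$, whose component at $(m, n)$ is $e_m \diamond \varepsilon_n \colon X_m \diamond Y_n \to B^m \diamond B^n$. Next I would identify $\diamond(\ms{S}_B \otimes \ms{S}_B)$ with $\ms{S}_B \circ {+}$: on objects both functors send $(m,n)$ to $B^{m+n} = B^m \diamond B^n$ (using that $\ms{E}$ is strictly monoidal), while on morphisms a pair $(\sigma, \tau) \in \boldsymbol{S}_m \times \boldsymbol{S}_n$ induces the same permutation of tensor factors of $B^{m+n}$ under either interpretation, as dictated by the standard inclusion $\boldsymbol{S}_m \times \boldsymbol{S}_n \hookrightarrow \boldsymbol{S}_{m+n}$. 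Composing $\diamond(e \otimes \varepsilon)$ with this identification yields the required natural transformation, and passing through the Kan extension adjunction produces $e \diamond \varepsilon \colon X \ast Y \Rightarrow \ms{S}_B$.

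Alternatively, one may describe $(e \diamond \varepsilon)_k$ explicitly via the coproduct formula~\eqref{Kel05}: on the summand indexed by a shuffle $\sigma \in {\rm Sh}(m, n) \subset \boldsymbol{S}_k$, it is the composite $X_m \diamond Y_n \xrightarrow{e_m \diamond \varepsilon_n} B^m \diamond B^n = B^{m+n} \xrightarrow{\ms{S}_B(\sigma)} B^k$. It then remains to check $\boldsymbol{S}_k$-equivariance: the $\boldsymbol{S}_k$-action on the left-hand side permutes shuffle summands and twists along the embedding $\boldsymbol{S}_m \times \boldsymbol{S}_n \hookrightarrow \boldsymbol{S}_k$, while the $\boldsymbol{S}_k$-action on $B^k$ is by permutation of factors; equivariance then follows from the naturality of $e$ and $\varepsilon$ combined with the functoriality of $\ms{S}_B$.

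The main difficulty is purely bookkeeping, namely tracking the enriched Kan extension (equivalently, the dinaturality conditions of the coend defining $X \ast Y$) and verifying the symmetric-group equivariance of the summand decomposition. No conceptual obstacle arises: once the component morphisms are aligned, the required compatibility identities reduce to the axioms of (enriched) symmetric monoidal categories and the defining properties of the $\ms{S}$-modules $(X, e)$ and $(Y, \varepsilon)$.
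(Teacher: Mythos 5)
Your proposal is correct and follows essentially the same route as the paper: identify $X \ast Y$ with $\operatorname{Lan}_{+}\bigl(\diamond(X\otimes Y)\bigr)$, use the universal property of the left Kan extension to reduce to producing a natural transformation $\diamond(X\otimes Y) \Rightarrow \ms{S}_B\,{+}$, and obtain it from the $(\boldsymbol{S}_m\times\boldsymbol{S}_n)$-equivariant maps $e_m \diamond \varepsilon_n \colon X_m \diamond Y_n \to B^{m+n}$. The explicit shuffle-summand description you add is a harmless elaboration of the same argument.
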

\begin{proof}
We identify $X\ast Y$ with the left Kan extension $\operatorname{Lan}_{+} \bigl( \diamond (X \otimes Y) \bigr)$ of
$\diamond (X\otimes Y) \colon {\ms S} \otimes {\ms S} \to E$ along $+ \colon {\ms S} \otimes {\ms S} \to {\ms S}$ (Remark~\ref{rmk_conv}\,(1)).
Then, the natural transformations $\operatorname{Lan}_{+} \bigl( \diamond (X \otimes Y) \bigr) \Rightarrow {\ms S}_B$ are in bijection with
the natural transformations $\diamond (X\otimes Y) \Rightarrow {\ms S}_B +$ \cite[Section~X.3\,(9)]{MacLane}.
Since, for every $m,n \in \mathbb{N}$, the morphism $e_m \diamond \varepsilon_n \colon X_m \diamond Y_n \to B^{\diamond m+n}$ is
$(\boldsymbol{S}_m \times \boldsymbol{S}_n)$-equivariant, this yields a natural transformation $e\diamond \varepsilon \colon \diamond (X\otimes Y) \Rightarrow {\ms S}_B +$,
hence we obtain a natural transformation $\operatorname{Lan}_{+} \bigl( \diamond (X \otimes Y) \bigr) \Rightarrow {\ms S}_B$, which we denote with the same symbol
$e\diamond \varepsilon$.
\end{proof}

\begin{Proposition}
For all functors $X, Y, Z \colon {\ms S} \to {\ms E}$,
there is a $\mx{V}$-natural isomorphism $(X \ast Y) \ast Z \simeq X \ast (Y \ast Z)$.
Furthermore, if $(X, e), (Y, \varepsilon), (Z, {\mf e})$ are ${\ms S}$-modules over $B$, then $(e\diamond \varepsilon)\diamond {\mf e} = e\diamond (\varepsilon\diamond {\mf e})$
under the previous isomorphism.
\end{Proposition}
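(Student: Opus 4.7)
The plan is to reduce both sides to the same triple coend, then read off the compatibility of the augmentation from the universal property of left Kan extensions.

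First I would expand each iterated convolution using the coend formula of Definition \ref{Day}. For $(X \ast Y) \ast Z$, I write
\[
((X \ast Y) \ast Z)_k = \int^{p, l} {\ms S}(p+l, k) \odot (X \ast Y)_p \diamond Z_l = \int^{p, l} {\ms S}(p+l, k) \odot \Bigl( \int^{m,n} {\ms S}(m+n, p) \odot X_m \diamond Y_n \Bigr) \diamond Z_l,
\]
and use that $\diamond$, being part of a closed monoidal structure on $\ms E$, preserves $\mx V$-enriched colimits in each variable, to pull the inner coend out. Then Fubini for coends and the co-Yoneda lemma give
\[
\int^{p} {\ms S}(m+n, p) \odot {\ms S}(p + l, k) \;\cong\; {\ms S}((m+n)+l, k).
\]
Performing the analogous manipulation on $(X \ast (Y \ast Z))_k$, one lands on ${\ms S}(m + (n+l), k)$. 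Associativity of $+$ in $\ms S$ and of $\diamond$ in $\ms E$ (which are strict after invoking Mac Lane's coherence theorem, as noted in the paper) identifies the two triple coends
\[
\int^{m,n,l} {\ms S}(m+n+l, k) \odot X_m \diamond Y_n \diamond Z_l,
\]
giving the required $\mx V$-natural isomorphism $(X \ast Y) \ast Z \simeq X \ast (Y \ast Z)$.

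For the augmentation statement, I would work with the Kan extension description from Remark \ref{rmk_conv}(1) rather than with coends. Both $(X \ast Y) \ast Z$ and $X \ast (Y \ast Z)$ are isomorphic, via iterated applications of that remark together with the fact that $\operatorname{Lan}$'s compose along $+$, to a single left Kan extension
\[
\operatorname{Lan}_{+_3}\bigl( \diamond_3 (X \otimes Y \otimes Z) \bigr) \colon \ {\ms S} \to {\ms E},
\]
where $+_3 \colon \ms S^{\otimes 3} \to \ms S$ and $\diamond_3 \colon \ms E^{\otimes 3} \to \ms E$ are the ternary sum and product. By the universal property recalled in the proof of Proposition \ref{conv/B}, natural transformations from this Kan extension to ${\ms S}_B$ correspond bijectively to natural transformations $\diamond_3 (X \otimes Y \otimes Z) \Rightarrow {\ms S}_B \circ +_3$. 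Both $(e \diamond \varepsilon) \diamond {\mf e}$ and $e \diamond (\varepsilon \diamond {\mf e})$ are obtained by applying this bijection twice, and in both cases the underlying component at $(m,n,l)$ is $e_m \diamond \varepsilon_n \diamond {\mf e}_l \colon X_m \diamond Y_n \diamond Z_l \to B^{m+n+l}$. Thus they agree under the associativity isomorphism.

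The main obstacle, and the only place requiring attention, is justifying the interchange of the coend with $\diamond$ in the first step and confirming that the universal property of Kan extensions is available in the $\mx V$-enriched setting: for this I would invoke that $\ms E$ is $\mx V$-symmetric monoidal closed, so $\diamond$ is $\mx V$-cocontinuous in each variable, and cite \cite{Kel05enr} for the enriched coend Fubini theorem and the enriched adjunction between $\operatorname{Lan}_+$ and restriction along $+$. Everything else is bookkeeping that follows the classical non-enriched Day convolution argument.
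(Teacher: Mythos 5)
Your proposal is correct and follows essentially the same route as the paper: the associativity of the Day convolution via the standard coend/co-Yoneda computation is exactly the argument the paper outsources to the cited references (Day, Loregian), and your treatment of the augmentations --- identifying both iterated convolutions with a single left Kan extension along the ternary sum and checking that both composite augmentations correspond, under the Kan-extension bijection, to the same transformation with components $e_m \diamond \varepsilon_n \diamond \mf{e}_l$ --- is the paper's argument. The only caveat is notational: in your co-Yoneda step the two hom-objects of $\ms{S}$ live in $\mx{V}$, so the operation between them should be $\otimes$ rather than the copower $\odot$; this does not affect the argument.
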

\begin{proof}
The associativity of the convolution is proved in \cite[Proposition~6.2.1]{Loregian} (see \cite[Section~3]{Day70} for the original proof).
To see that $(e\diamond \varepsilon)\diamond {\mf e} = e\diamond (\varepsilon\diamond {\mf e})$, notice that
$(e\diamond \varepsilon ) \diamond {\mf e} = e\diamond (\varepsilon \diamond {\mf e} )$:
${\diamond (\diamond (X\otimes Y) \otimes Z ) \Rightarrow {\ms S}_B ( + (+\otimes {\rm Id}_{\ms S}))}$,
under the identifications $\diamond (\diamond (X\otimes Y) \otimes Z ) = \diamond (X\otimes \diamond (Y \otimes Z ))$
and ${\ms S}_B ( + (+\otimes {\rm Id}_{\ms S})) = {\ms S}_B ( + ( {\rm Id}_{\ms S} \otimes +))$.
Then the claim follows from the properties of the left Kan extension as in the proof of the previous proposition.
\end{proof}

In particular, for a fixed functor $Y \colon {\ms S} \to {\ms E}$, Day convolution determines a $\mx{V}$-functor
${\ms S} \otimes {\ms S}^\op \to \ms{E}$ given by $(m, n) \mapsto Y^{\ast n}(m)$.
This allows us to define the composition of an ${\ms S}$-module over $B$
and a rooted ${\ms S}$-module over $B$.

\begin{Definition}\label{rooted_module}
A rooted ${\ms S}$-module is a pair $(W, \tilde{\varepsilon})$, where $W $ is an ${\ms S}$-module, $\tilde{\varepsilon} \colon W \Rightarrow \Delta_B$ is a natural transformation
and $\Delta_B$ is the constant functor on $B$.
A rooted ${\ms S}$-module over $B$ is a~triple $(W, \varepsilon, \tilde{\varepsilon})$ such that $(W, \varepsilon)$
is an~${\ms S}$-module over $B$ and $(W, \tilde{\varepsilon})$ is a~rooted ${\ms S}$-module.
\end{Definition}

Let $(X, e)$ be an ${\ms S}$-module over $B$, and let $(W, \varepsilon, \tilde{\varepsilon})$ be a rooted ${\ms S}$-module over $B$.
Let us consider the functor
\begin{align}
F \colon \ {\ms S}^{\rm op} \otimes {\ms S} &\to (\ms S - {\rm mod}),\nonumber \\
(n, n') &\mapsto \begin{cases} \varnothing , & {\rm if} \ n \not= n' \\ X_n \times_{B^n} W^{\ast n} (\_) , & {\rm if} \ n=n' , \end{cases}\label{composition_bifunctor}
\end{align}
where the morphism $W^{\ast n}(m) \to B^n$ is the composition of the map induced by the projection
\begin{align*}
W^{\ast n}(m) &= \coprod_{\substack{(m_1, \dots , m_n)\in \N^n \\ m_1 + \dots + m_n =m}}
{\rm Sh}(m_1, \dots , m_n)
\odot W_{m_1} \diamond \dots \diamond W_{m_n} \\
&
\to \coprod_{\substack{(m_1, \dots , m_n)\in \N^n \\ m_1 + \dots + m_n =m}}W_{m_1} \diamond \dots \diamond W_{m_n}
\end{align*}
followed by the morphism induced by $\tilde{\varepsilon}$
\[
 \coprod_{\substack{(m_1, \dots , m_n)\in \N^n \\ m_1 + \dots + m_n =m}}W_{m_1} \diamond \dots \diamond W_{m_n} \to B^n .
\]
Where ${\rm Sh}(m_1, \dots , m_n)$ is the set of $(m_1, \dots , m_n)$-shuffles in $\boldsymbol{S}_m$, that is
the permutations $\sigma \in \boldsymbol{S}_m$ such that
$\sigma (1) < \dots < \sigma (m_1)$, $\sigma (m_1 + 1) <\dots < \sigma (m_1+m_2)$, $\dots$.

\begin{Definition}[composition]\label{composition}
Given an ${\ms S}$-module over $B$, $(X, e)$, and a rooted ${\ms S}$-module over~$B$,
$(W, \varepsilon, \tilde{\varepsilon})$, the composition product (or plethysm)
$X \circ W$ is the $\mx{V}$-functor $\ms{S} \to \ms{E}$ defined by the coend of~\eqref{composition_bifunctor}:
\begin{equation} \label{kellycomp}
X \circ W \colon \ m \mapsto \int^{n} X_n \times_{B^n} W^{\ast n}(m) .
\end{equation}
\end{Definition}

\begin{Proposition}\label{crsmb}
Let $(X, e, \tilde{e})$ and $(W, \varepsilon, \tilde{\varepsilon})$ be rooted ${\ms S}$-modules over $B$.
Then $\varepsilon$ induces, on~$X \circ W$, a structure of ${\ms S}$-module over~$B$, which will be denoted
$\varepsilon' \colon X \circ W \Rightarrow \ms{S}_B$. Furthermore, $\tilde{e}$~induces a structure of rooted ${\ms S}$-module on $X\circ W$.
\end{Proposition}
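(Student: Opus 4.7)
The plan is to produce the two structure maps on $X \circ W$ by supplying dinatural families of morphisms indexed by $n \in \ms{S}$, and then invoking the universal property of the coend~\eqref{kellycomp} degree by degree.

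First, for the construction of $\varepsilon' \colon X \circ W \Rightarrow \ms{S}_B$, I would iterate Proposition~\ref{conv/B} applied to $(W,\varepsilon)$ to obtain that $(W^{\ast n}, \varepsilon^{\diamond n})$ is an $\ms{S}$-module over $B$, with structure map $\varepsilon^{\diamond n}(m) \colon W^{\ast n}(m) \to B^m$. Under the explicit decomposition in Remark~\ref{rmk_conv}(2), this map sends each summand ${\rm Sh}(m_1,\dots,m_n) \odot W_{m_1} \diamond \cdots \diamond W_{m_n}$ to $B^m$ via $\varepsilon^{\diamond n}$ composed with the shuffle permutations. Precomposing with the projection from the fiber product yields a family
\[
\rho_{n,m} \colon X_n \times_{B^n} W^{\ast n}(m) \to W^{\ast n}(m) \to B^m,
\]
which, once dinaturality in $n$ is verified, descends through the coend to the desired $\varepsilon'_m \colon (X \circ W)(m) \to B^m$. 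Naturality in $m$ then assembles these into $\varepsilon' \colon X \circ W \Rightarrow \ms{S}_B$, with $\boldsymbol{S}_m$-equivariance inherited from the $\boldsymbol{S}_m$-equivariance of $\varepsilon^{\diamond n}$ established by the iterated application of Proposition~\ref{conv/B}.

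Second, for the rooted structure $\widetilde{\varepsilon'} \colon X \circ W \Rightarrow \Delta_B$ induced by $\tilde{e}$, I would use the first projection $X_n \times_{B^n} W^{\ast n}(m) \to X_n$ composed with $\tilde{e}_n \colon X_n \to B$ to build a family landing in $B$. Dinaturality in $n$ is immediate since $\tilde{e}_n$ is $\boldsymbol{S}_n$-invariant ($\Delta_B$ carrying the trivial action), and the construction is manifestly independent of $m$ because the target $B$ does not depend on $m$; the universal property of the coend then yields $\widetilde{\varepsilon'}$.

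The hard part will be verifying the dinaturality of $\{\rho_{n,m}\}_n$. Concretely, for $\sigma \in \boldsymbol{S}_n$ one must show that the composite $X_n \times_{B^n} W^{\ast n}(m) \to B^m$ changes in matching ways when $\sigma$ acts on $X_n$ and when $\sigma$ acts on $W^{\ast n}(m)$ by permuting the $n$ slots of the Day convolution. Using \eqref{Kel05}, the $\sigma$-action on $W^{\ast n}(m)$ permutes the summands indexed by $(m_1, \dots, m_n)$ and, after composing with $\varepsilon^{\diamond n}$, produces the block permutation of $B^m$ corresponding to the block partition $m = m_1 + \cdots + m_n$; by the equivariance of $e$, this matches the permutation on the $B^n$-labels arising from $\sigma$ acting on $X_n$, and the fiber product identification over $B^n$ forces the two composites to agree. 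Once this combinatorial check is in place, the rest of the statement follows from the standard properties of coends and of the Day convolution, exactly as in the proof of Proposition~\ref{conv/B}.
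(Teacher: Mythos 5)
Your construction is the same as the paper's: both structure maps are defined on the integrand $F(n,n)=X_n\times_{B^n}W^{\ast n}(\_)$ of \eqref{composition_bifunctor} and then descended through the coend --- the rooted structure by composing the projection to $X_n$ with $\tilde{e}_n$ (which is automatically $\boldsymbol{S}_n$-invariant because $\Delta_B$ is the constant functor), and the $\ms{S}$-module-over-$B$ structure by composing the projection to $W^{\ast n}(m)$ with the over-$B$ structure on $W^{\ast n}$ obtained by iterating Proposition~\ref{conv/B}. This is precisely how the paper argues, so in outline the proposal is correct.

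However, your justification of the compatibility needed to descend $\rho_{n,m}$ through the coend is off. Since $\rho_{n,m}$ factors through the projection $X_n\times_{B^n}W^{\ast n}(m)\to W^{\ast n}(m)$, the action $F(\sigma,\mathrm{id})$ on the $X_n$-factor leaves $\rho_{n,m}$ unchanged; neither the action on $X_n$ nor ``the fiber product identification over $B^n$'' can compensate a block permutation of $B^m$, so the matching mechanism you describe does not exist. What is actually required is that the structure map $W^{\ast n}(m)\to B^m$ furnished by Proposition~\ref{conv/B} --- which already incorporates the shuffles, i.e., records the global labels of the $m$ points rather than the block order --- is \emph{invariant} under the $\boldsymbol{S}_n$-action permuting the $n$ Day-convolution slots. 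This invariance is the symmetry of the Day convolution: permuting the slots permutes the summands of \eqref{Kel05} and twists the shuffle by the corresponding block permutation, leaving the composite to $B^m$ unchanged (all slots carry the same map $\varepsilon$). If instead one took the block-ordered map, as your phrase ``produces the block permutation of $B^m$'' suggests, the two composites would genuinely differ and the family would not descend. With the check stated this way, the rest of your argument goes through and coincides with the paper's proof.
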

\begin{proof}
Let $F$ be the functor defined in \eqref{composition_bifunctor}. For any $n\in \mathbb{N}$, $\tilde{e}$ gives a natural transformation
$\alpha_n \colon F(n,n) \Rightarrow \Delta_B$ such that the following diagram commutes for every $\sigma \in \boldsymbol{S}_n$:
\[
\begin{tikzcd}
F(n,n) \arrow{r}{F({\rm id}, \sigma)} \arrow{d}{F( \sigma, {\rm id})} & F(n,n) \arrow{d}{\alpha_n} \\
F(n,n) \arrow{r}{\alpha_n} & \,\Delta_B.
\end{tikzcd}
\]
By the properties of the coend, this gives a natural transformation $X\circ W \Rightarrow \Delta_B$.

The definition of $\varepsilon'$ is similar, it uses the natural transformation $F(n,n) \Rightarrow {\ms S}_B$
induced by $\varepsilon$ as in Proposition~\ref{conv/B}.
\end{proof}

\begin{Definition}\label{DY}
Let $(Y, e)$ be an ${\ms S}$-module over $B$ and let us suppose that
$B$ has an involution~$\iota$. We define~$(DY, \varepsilon, \tilde{\varepsilon})$ to be the
rooted ${\ms S}$-module over $B$
such that $DY_n:= Y_{n+1}$ with the action of $\boldsymbol{S}_n$
seen as the permutations of $\boldsymbol{S}_{n+1}$ that fix $n+1$, $\varepsilon_n \colon DY_n \to B^n$
is given by the first $n$ components of $e_{n+1}$ and
$\tilde{\varepsilon}_n \colon DY_n \to B$ denotes the last component of~$e_{n+1}$
composed with~$\iota$.
\end{Definition}

Let $B$ as in the previous definition with an involution $\iota$.
By $I_1$, we denote the rooted ${\ms S}$-module over $B$ concentrated in degree one
\[
(I_1)_n = \begin{cases}
B & \text{if} \ n=1, \\
\varnothing & \text{otherwise},
\end{cases}
\]
where $\varepsilon_1 = \tilde{\varepsilon}_1 = \operatorname{Id}_B$.

For later use, we define $I_2$ to be the ${\ms S}$-module over $B$ concentrated in degree two
\[
(I_2)_n = \begin{cases}
B & \textit{if} \ n=2, \\
\varnothing & \text{otherwise},
\end{cases}
\]
where $e_2 =(\operatorname{Id}_B, \iota)$ and the symmetric group $\boldsymbol{S}_2$ acts via the involution $\iota$.
Notice that $I_1 = DI_2$.

From now on, we restrict our attention to the case where $(\mx{V},\otimes,\1) = (\ms{Cat},\times,\boldsymbol{1})$
and $(\ms{E},\diamond,I) = (\Stk, \times_\K, \operatorname{Spec}\K)$.
\begin{Proposition} \label{compositionsymmon}
The $\mx{V}$-category of rooted ${\ms S}$-modules over $B$, together with the composition product $\circ$
and the unit $I_1$, forms a monoidal $\mx{V}$-category.
\end{Proposition}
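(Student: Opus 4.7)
The plan is to follow the classical strategy of \cite{Kel05} for symmetric sequences, adapted to the $\mx{V}$-enriched setting over the base $B$. Three things must be verified: the unit isomorphisms $X \circ I_1 \cong X$ and $I_1 \circ W \cong W$, the associator $(X \circ Y) \circ Z \cong X \circ (Y \circ Z)$, and the pentagon and triangle coherence axioms. The two main tools are the explicit shuffle formula \eqref{Kel05} for Day convolution and Fubini's theorem for coends in $\ms{E}$.

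For the unit laws, the rooted ${\ms S}$-module $I_1$ has $(I_1)_1 = B$ with $\varepsilon_1 = \tilde{\varepsilon}_1 = \operatorname{Id}_B$ and $(I_1)_k = \varnothing$ otherwise. Iterating \eqref{Kel05} shows that $I_1^{\ast n}(m) = \varnothing$ unless $m = n$, in which case the only composition of $n$ into $n$ positive parts has all parts equal to $1$, the $(1, \ldots, 1)$-shuffle set is a singleton, and $I_1^{\ast n}(n) \cong B^n$ with structure map $\operatorname{Id}_{B^n}$. Substituting into \eqref{kellycomp}, only the $n = m$ summand of the coend contributes, yielding
\begin{equation*}
(X \circ I_1)_m \cong X_m \times_{B^m} B^m \cong X_m ,
\end{equation*}
compatibly with the $\boldsymbol{S}_m$-action and with the induced structure maps from Proposition \ref{crsmb}. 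The isomorphism $I_1 \circ W \cong W$ is even more immediate: only the $n = 1$ summand contributes to the coend, giving $B \times_B W^{\ast 1}(m) \cong W_m$.

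Associativity is proved by identifying both $(X \circ Y) \circ Z$ and $X \circ (Y \circ Z)$ with a common iterated coend. Expanding $(X \circ Y) \circ Z$ via Definition \ref{composition} twice and applying Fubini for coends yields
\begin{equation*}
\bigl( (X \circ Y) \circ Z \bigr)_m \cong \int^{n, k} X_n \times_{B^n} Y^{\ast n}(k) \times_{B^k} Z^{\ast k}(m) .
\end{equation*}
For $X \circ (Y \circ Z)$, the key intermediate identity is a distributivity isomorphism
\begin{equation*}
(Y \circ Z)^{\ast n}(m) \cong \int^{k} Y^{\ast n}(k) \times_{B^k} Z^{\ast k}(m) ,
\end{equation*}
obtained by expanding both sides through \eqref{Kel05} as disjoint unions indexed by compositions of $m$ together with shuffles, and exhibiting a canonical $\boldsymbol{S}_n$-equivariant bijection between the two indexing sets. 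Once this identity is established, inserting it into the coend defining $X \circ (Y \circ Z)$ and applying Fubini once more yields the same triple coend; the associator is then the canonical isomorphism coming from the universal property of coends.

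The pentagon and triangle coherence axioms reduce to comparing two morphisms out of a universal iterated coend, and since both are induced by the same compatibilities of the structure maps $\varepsilon$, $\tilde{\varepsilon}$ with the shuffle data, they agree on components and hence globally; $\mx{V}$-naturality of each isomorphism is inherited from the $\mx{V}$-enriched universal properties of the coends and Kan extensions in sight. The main technical obstacle is the distributivity identity above: it demands a careful matching, through nested compositions of $m$ and iterated shuffles, between the Day-convolution expansion of $(Y \circ Z)^{\ast n}$ and the coend expansion of $\int^k Y^{\ast n}(k) \times_{B^k} Z^{\ast k}(m)$, together with the verification that this matching is equivariant with respect to the $\boldsymbol{S}_n$-action permuting the $n$ copies of $Y \circ Z$. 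That the fiber products over $B^n$ and $B^k$ interact correctly with the coends is ensured by the fact that pullback functors in $\ms{E} = \Stk$ preserve the relevant colimits along the maps induced by $\tilde{\varepsilon}$.
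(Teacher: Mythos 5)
Your overall strategy is the paper's: associativity is reduced to the interchange identity $(Y \circ Z)^{\ast n}(m) \cong \int^{k} Y^{\ast n}(k) \times_{B^k} Z^{\ast k}(m)$, whose right-hand side is by \eqref{kellycomp} nothing but $\bigl(Y^{\ast n} \circ Z\bigr)(m)$ — this is exactly Lemma \ref{lemmacompositionsymmon} in the paper — and the rest is Fubini for coends together with the commutation of fiber products with the relevant colimits of stacks. The only real difference is that you propose to prove this interchange by a combinatorial matching of nested compositions and iterated shuffles, where the paper runs a coend-calculus chain (Yoneda plus commutation of coends with fiber products); your route would work, but it is precisely the technical heart of the argument and you leave it as a sketch rather than carrying it out.

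There is, however, a concrete error in your verification of the right unit law. You claim that the $(1,\dots,1)$-shuffle set is a singleton, so that $I_1^{\ast n}(n) \cong B^n$. With the definition used in the paper, a $(1,\dots,1)$-shuffle is subject to no order conditions at all, so ${\rm Sh}(1,\dots,1) = \boldsymbol{S}_n$ and $I_1^{\ast n}(n) = \boldsymbol{S}_n \odot B^n$, which is exactly what the paper computes at the end of its proof. This miscount is not harmless: the coend in \eqref{kellycomp} identifies the diagonal $\boldsymbol{S}_m$-action on $X_m \times_{B^m} I_1^{\ast m}(m)$, and it is precisely the free $\boldsymbol{S}_m$-indexing by shuffles that is absorbed by these identifications, yielding $(X \circ I_1)_m \cong X_m$. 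If $I_1^{\ast m}(m)$ really were a single copy of $B^m$, the same coend would instead produce a quotient of $X_m$ by the $\boldsymbol{S}_m$-action, and the unit law would fail. So the isomorphism you assert is true, but the justification as written is wrong; it is fixed by the correct shuffle count together with explicitly tracking the coend identifications. A secondary point: the paper also verifies that the associator respects both the rooted structure and the structure over $B$ (both sides are induced by the same structure maps $\tilde{e}$ and $\mf{e}$); you allude to this only in the coherence paragraph, and it should be checked for the associativity isomorphism itself.
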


To prove the proposition, we follow Kelly's arguments in \cite[Section~3]{Kel05}. The next Lemma is needed.

\begin{Lemma}\label{lemmacompositionsymmon}
Let $(V, \delta, \tilde \delta), (W, \varepsilon, \tilde \varepsilon)$ be rooted ${\ms S}$-modules over $B$.
Then $(V \circ W)^{*m}$ is naturally isomorphic to $V^{*m} \circ W$.
\end{Lemma}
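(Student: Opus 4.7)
The plan is to verify the isomorphism by direct coend calculus, using the explicit shuffle formula for iterated Day convolution together with associativity of $\ast$.

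First, I would expand the right-hand side: by definition,
\[
(V \circ W)^{\ast m}(k) \;=\; \int^{k_1, \dots , k_m} \ms{S}(k_1 + \cdots + k_m, k) \odot (V \circ W)(k_1) \diamond \cdots \diamond (V \circ W)(k_m),
\]
and then substitute the coend formula \eqref{kellycomp} for each factor $(V \circ W)(k_j) = \int^{n_j} V_{n_j} \times_{B^{n_j}} W^{\ast n_j}(k_j)$. Since $\diamond$ is the cartesian product in $\Stk$ and hence preserves colimits (in particular coends) in each variable, and since the ``co-Fubini'' theorem for coends \cite[Section~IX.8]{MacLane} allows me to interchange the order of integration, I can pull all the $n_j$-coends outside and all the $k_j$-coends inside.

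Second, I would observe that the fibre products $\times_{B^{n_j}}$ assemble into a single fibre product over $B^{n_1 + \cdots + n_m} \cong B^{n_1} \diamond \cdots \diamond B^{n_m}$, because the structure map on each $W^{\ast n_j}(k_j) \to B^{n_j}$ is induced by $\tilde{\varepsilon}$ and these are mutually independent. After this rearrangement, the inner coend becomes
\[
\int^{k_1, \dots , k_m} \ms{S}(k_1+\cdots+k_m, k) \odot W^{\ast n_1}(k_1) \diamond \cdots \diamond W^{\ast n_m}(k_m),
\]
which is precisely the $k$-th component of $W^{\ast n_1} \ast \cdots \ast W^{\ast n_m}$. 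By the associativity of Day convolution (the preceding Proposition), this is naturally isomorphic to $W^{\ast (n_1 + \cdots + n_m)}(k)$.

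Third, I would feed this back into the outer coends to obtain
\[
\int^{n_1, \dots, n_m} \bigl( V_{n_1} \diamond \cdots \diamond V_{n_m} \bigr) \times_{B^{n_1 + \cdots + n_m}} W^{\ast (n_1 + \cdots + n_m)}(k).
\]
Introducing a new variable $n$ together with a coend over $\ms{S}(n_1 + \cdots + n_m, n)$ (an application of the co-Yoneda lemma) and using again the formula for $V^{\ast m}(n)$ from Remark \ref{rmk_conv}(2), the expression collapses to $\int^n V^{\ast m}(n) \times_{B^n} W^{\ast n}(k) = (V^{\ast m} \circ W)(k)$.

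The main technical obstacle is the bookkeeping in the second step: I must check that the map inducing the fibre product over $B^{n_1 + \cdots + n_m}$ agrees, under the identification $B^{n_1+\cdots+n_m} \cong B^{n_1} \diamond \cdots \diamond B^{n_m}$, with the product of the individual maps to $B^{n_j}$; and I must verify that the various shuffle sets appearing after swapping coends combine correctly, using the canonical bijection between shuffles of shuffles and iterated shuffles. Once this naturality is established, $\mx{V}$-naturality of the resulting isomorphism follows from the universal property of coends, as in \cite[Section~3]{Kel05}.
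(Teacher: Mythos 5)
Your computation is essentially the paper's own proof: both expand $(V\circ W)^{\ast m}$ by the coend formulas, regroup the fibre products over the various $B^{n_j}$ into one over $B^{n_1+\cdots+n_m}$, recognize the inner coend as $W^{\ast n_1}\ast\cdots\ast W^{\ast n_m}\cong W^{\ast(n_1+\cdots+n_m)}$ by associativity of Day convolution, and then reintroduce a coend variable via co-Yoneda to identify the result with $V^{\ast m}\circ W$. The only point you gloss --- that the fibre product over $B$ commutes with the coends involved (not just that $\diamond$ preserves colimits) --- is exactly the fact the paper also invokes without further proof, so your argument is correct and follows the same route.
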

\begin{proof}
We have
\begin{align*}
(V \circ W)^{*m} &= \int^{n_1, \dots , n_m} {\ms S}(n_1 + \dots + n_m , \_) \times (V \circ W)_{n_1} \times \dots \times (V \circ W)_{n_m} \\
& \cong \int^{n_i, k_i} {\ms S}(n_1 + \dots + n_m , \_) \times_{i=1}^m (V_{k_i} \times_{B^{k_i}} (W^{*k_i})_{n_i})
\\
& \cong \int^{n_i, k_i} (\times_{i=1}^mV_{k_i}
)
\times_{B^{k_1 + \dots + k_m}} {\ms S}(n_1 + \dots + n_m , \_)
\times_{i=1}^m (W^{*k_i})_{n_i} \\
& \cong \int^{k_i} (V_{k_1} \times \dots \times V_{k_m}) \times_{B^{k_1 + \dots + k_m}} (W^{*k_1} * \dots *W^{*k_m}) \\
& \cong \int^{k_i} (V_{k_1} \times \dots \times V_{k_m}) \times_{B^{k_1 + \dots + k_m}} (W^{*k_1+\dots +k_m}) \\
& \cong \int^{k_i} (V_{k_1} \times \dots \times V_{k_m}) \times_{B^{k_1 + \dots + k_m}} \int^t {\ms S}(k_1 + \dots + k_m , t) \times W^{*t} \\
& \cong \int^{k_i, t} {\ms S}(k_1 + \dots + k_m , t) \times (V_{k_1} \times \dots \times V_{k_m}) \times_{B^{k_1 + \dots + k_m}} W^{*t}\\
& \cong \int^{t} (V^{*m})_t \times_{B^{t}} W^{*t},
\end{align*}
where in the second and sixth isomorphisms
we have used the fact that
\[
(\boldsymbol{S}_1 \times_{B^{k_1}} R_1) \times (\boldsymbol{S}_2 \times_{B^{k_2}} R_2) \cong (\boldsymbol{S}_1 \times \boldsymbol{S}_2)\times_{B^{k_1+k_2}}
(R_1 \times R_2) ,
\]
for $R_i$, $\boldsymbol{S}_i$ be $B^{k_i}$-modules;
in the third isomorphism we have used the definition of $*$ and the property that coends commute
with fiber products in the case of stacks;
the fifth isomorphism follows from Yoneda's lemma and the last one follows from the definition of Day convolution.
\end{proof}

\begin{proof}[Proof of Proposition \ref{compositionsymmon}]
Let $(X, e, \tilde{e})$, $(Y, \varepsilon, \tilde{\varepsilon})$, $(Z, {\mf e}, \tilde{\mf e})$ be rooted ${\ms S}$-modules over $B$. Then
\begin{align*}
X\circ (Y\circ Z) &= \int^m X_m \times_{B^m} (Y\circ Z)^{*m} \\
& \cong \int^m X_m \times_{B^m} \bigl(Y^{*m}\circ Z\bigr) \qquad \text{(by Lemma \ref{lemmacompositionsymmon})} \\
& \cong \int^{m, k} X_m \times _{B^m} \bigl(Y^{*m}_k \times_{B^k} Z^{*k}\bigr) \qquad \text{(by definition of composition)} \\
& \cong \int^{k} (X\circ Y)_k \times_{B^k} Z^{*k} \\
& = (X\circ Y) \circ Z .
\end{align*}

As in the proof of Proposition \ref{crsmb}, the structure of rooted ${\ms S}$-module on $X\circ (Y\circ Z)$
is given by the natural transformations
\[
X_n \times_{B^n} (Y\circ Z)^{\ast n} (\_) \Rightarrow \Delta_B \qquad {\rm for} \ n\in \N ,
\]
induced by $\tilde{e}$. On the other hand, the structure of rooted ${\ms S}$-module on $(X\circ Y)\circ Z$
is given by the natural transformations
\[
(X\circ Y)_n \times_{B^n} Z^{\ast n}(\_) \Rightarrow \Delta_B \qquad {\rm for} \ n\in \N ,
\]
induced by the structure of rooted ${\ms S}$-module on $X\circ Y$ (which in turn is induced by $\tilde{e}$).
We see, from the explicit isomorphism $X\circ (Y\circ Z) \cong (X\circ Y)\circ Z$ defined above,
that these two structures of rooted ${\ms S}$-modules coincide.

Concerning the structures of ${\ms S}$-modules over $B$, the one on $(X\circ Y)\circ Z$ is given by the natural transformations
\[
(X\circ Y)_n \times_{B^n} Z^{\ast n}(\_) \Rightarrow {\ms S}_B \qquad {\rm for} \ n\in \N ,
\]
induced by ${\mf e}$, and the one on $X\circ (Y\circ Z)$ is given by the natural transformations
\[
X_n\times_{B^n} (Y\circ Z)^{\ast n}(\_) \Rightarrow {\ms S}_B \qquad {\rm for} \ n\in \N ,
\]
induced by the structure of ${\ms S}$-module over $B$ of $Y\circ Z$ (which in turn is induced by ${\mf e}$).
We see that, under the isomorphism $X\circ (Y\circ Z) \cong (X\circ Y)\circ Z$ defined at the beginning of the proof,
these two structures of rooted ${\ms S}$-modules coincide.

The fact that $I_1$ is a unit for $\circ$ follows from the definition~\eqref{composition_bifunctor} of the functor $F$
whose coend is $X\circ I_1$ and from the following equalities:
\begin{align*}
I_1^{\ast n}(m) &= \coprod_{\substack{(m_1, \dots , m_n)\in \N^n \\ m_1 + \dots + m_n =m}}
{\rm Sh}(m_1, \dots , m_n)
\odot (I_1)_{m_1} \diamond \dots \diamond (I_1)_{m_n} \\
&=
\begin{cases}
\varnothing , & {\rm if} \ n\not= m , \\
 \boldsymbol{S}_n \odot B^n , & {\rm if} \ n= m .
 \end{cases}\tag*{\qed}
\end{align*}\renewcommand{\qed}{}
\end{proof}

\begin{Remark}
For any $Y$, $(-) \circ Y$ has a right adjoint \cite[Section~3]{Kel05}. In general, $X \circ (-)$ does not have a right adjoint,
but it does when $X$ is concentrated in arity one \cite[Lemma~2.9]{DH19}.
\end{Remark}

\begin{Remark}\label{stvsalgst}
If we replace $\Stk$ by $\Algst$ as the target $2$-category, the composition product is no longer well defined since
$\Algst$ is not bi-complete. However, the composition is well defined if $Y_0=\varnothing$.
The reason is that the only colimits involved turn out to be finite coproducts and quotients by finite group actions.
Proposition~\ref{compositionsymmon} and the full faithfulness of the embedding of~$\Algst$ into $\Stk$ then imply that the composition product on
the $2$-category of rooted ${\ms S}$-modules (in~$\Algst$) over~$B$
is $2$-naturally associative (whenever it is defined) with unit~$I_1$.
\end{Remark}

\begin{Example}\label{exSmod}
Let us finally spell out the example of $\ms{S}$-module over $B$ we are interested in,
which serves as a motivation for the theory we are presenting.
Here~$B$ is the stack $\cyinst$ of cyclotomic gerbes in~$\BG$ defined in \cite[Definition~3.3.6]{AGV08}
(see also Section~\ref{recall}),
the involution $\iota\colon \cyinst \to \cyinst$ is induced by the
automorphisms ${\boldsymbol{\mu}}_r \to {\boldsymbol{\mu}}_r$, $\zeta \mapsto \zeta^{-1}$, for $r\geq 1$ (according to \cite[Section~3.5]{AGV08}).

As $n$ varies, the stacks $\bal$, with the evaluation maps
${\rm ev}_n \colon \bal \to B^n$ and the $\boldsymbol{S}_n$-actions given by permutation of the marked points
on $\bal$ (respectively by permutation on~$B^n$), yield an $\ms{S}$-module over $B$ in $\Algst$ (cf.\ Definition~\ref{def S-mod/B}),
which we denote by $(\overline{\mc{B}}, \overline{e})$
and we refer to it as the $\ms{S}$-module of balanced twisted $G$-covers over genus $0$ curves.

 Moreover, let $\balsm$ be the locus in $\bal$ of twisted $G$-covers of smooth $n$-pointed curves of genus $0$
(cf.\ Notation~\ref{balsm}), and let ${\rm ev}_{n|} \colon \balsm \to B^n$ be the restriction of the evaluation map.
The $\boldsymbol{S}_n$-action on $\bal$ restricts to an action on $\balsm$ such that ${\rm ev}_{n|}$ is equivariant.
In this way we obtain an open sub-${\ms S}$-module of $(\overline{\mc{B}}, \overline{e})$ that will be denoted by
$({\mc{B}}, {e})$.
\end{Example}

In order to explain better the statement of our main theorem (Theorem~\ref{mainthm}),
we spell out the details of the previous constructions
in the particular cases that will be relevant for equation~\eqref{maineq}.

First of all, let us consider the rooted $\ms{S}$-module over $B$,
$(I_1 \coprod D\overline{\mc{B}} , \varepsilon , \tilde{\varepsilon})$.
By Definition~\ref{DY}, $(D\overline{\mc{B}})_n = \overline{\mc{B}}_{n+1}$ with $\boldsymbol{S}_n$
acting by permutation only on the first $n$ marked points, ${\varepsilon_n \colon\! (D\overline{\mc{B}})_n \! \to B^n}$
is given by the first $n$ evaluation maps, and
$\tilde{\varepsilon}_n \colon (D\overline{\mc{B}})_n \to B$ is $\iota \circ {\rm ev}_{n+1}^{n+1}$, whereas $\varepsilon_1 = \tilde{\varepsilon}_1 = {\rm Id}_B$. Then, using the concrete expressions~\eqref{Kel05} and~\eqref{composition_bifunctor}
and the definition of coend, we deduce that%
\begin{equation}\label{bcircdb}
\left( \mc{B} \circ (I_1 \coprod D \overline{\mc{B}}) \right)_n =
\coprod_{m} \Bigg( \mc{B}_m \times_{B^m} \coprod_{\substack{\underline{k} \in \N^m \\ |\underline{k}|=n}}
{\rm Sh}(\underline{k}) \times (I_1 \coprod D\overline{\mc{B}})_{\underline{k}} \Bigg) \Big /\boldsymbol{S}_m 	,
\end{equation}
where, for a multi-index $\underline{k} =(k_1, \dots , k_m) \in \N^m$ of length $|\underline{k}|=\sum_{i=1}^m k_i =n$,
$(I_1 \coprod D\overline{\mc{B}})_{\underline{k}} = \times_{i=1}^m (I_1 \coprod D\overline{\mc{B}})_{k_i}$
and the morphisms ${\rm Sh}(\underline{k}) \times (I_1 \coprod D\overline{\mc{B}})_{\underline{k}} \to B^m$
are induced by the $\tilde{\varepsilon}$'s.
Notice that an object in the right-hand side of the above equality is given by the equivalence class
(under the action of $\boldsymbol{S}_m$) of a~stable map in~$\mc{B}_m$, an object in
$(I_1)_{k_i} \coprod \overline{\mc{B}}_{k_i+1}$ for $i=1, \dots , m$ and a shuffle permutation, such that, for any $i=1, \dots , m$,
the evaluation of the stable map in~$\mc{B}_m$ at the $i$-th marked point coincides with the image under $\tilde{\varepsilon}_{k_i}$
of the object in
$(I_1)_{k_i} \coprod \overline{\mc{B}}_{k_i+1}$. Geometrically this means that, if $k_i \geq 2$, we glue together the first stable map with that in~$\overline{\mc{B}}_{k_i+1}$, for $i=1, \dots , m$, and get a balanced stable map. For more details on this, we refer to Section~\ref{section recursive relations}.

Concerning the operation $I_2 \circ D\overline{\mc{B}}$, that we will need later, we have that
\begin{gather*}
(I_2 \circ D\overline{\mc{B}} )_n =
\Bigg( (I_2)_2 \times_{B^2} \coprod_{\substack{\underline{k} \in \N^2 \\ |\underline{k}|=n}}
{\rm Sh}(\underline{k}) \times D\overline{\mc{B}}_{\underline{k}} \Bigg) \Big/\boldsymbol{S}_2 .
\end{gather*}
In particular, its objects are equivalence classes of pairs of stable maps in
$\overline{\mc{B}}_{k_1+1} \times \overline{\mc{B}}_{k_2+1}$
and a~shuffle in ${\rm Sh}(k_1, k_2)$,
such that the evaluations of the two stable maps at the last marked points are related by the involution
$\iota$. This means, geometrically,
that we can glue together the two stable maps along the last marked points to get a balanced map.

The last operation that we will use is $I_2 \times_{B^2} \bigl(D\overline{\mc{B}} \ast D\overline{\mc{B}}\bigr)$,
this is the functor ${\ms S} \to {\ms E}$
which associates to $n$ the following object:
\begin{equation}\label{i2xdb2}
 (I_2)_2 \times_{B^2} \coprod_{\substack{\underline{k} \in \N^2 \\ |\underline{k}|=n}}
{\rm Sh}(\underline{k}) \times D\overline{\mc{B}}_{\underline{k}} .
\end{equation}

\section{Grothendieck groups and their composition structure}\label{section4}

Let us recall the definition of $\Kstk$ from~\cite{Eke09}. Note that there are different versions in the literature (see \cite{BD07, Joy07, Toe09}), all of which are mapped to by $\Kstk$.

\begin{Definition}
The Grothendieck group $\Kstk$ is the abelian group generated by the isomorphism classes $\{\mc{X}\}$
of algebraic $\K$-stacks subject to the relations
\begin{enumerate}\itemsep=0pt
\item[(1)] $\{\mc{X}\} = \{\mc{Y}\} + \{\mc{X} \smallsetminus \mc{Y}\}$ if $\mc{Y}$ is a closed substack of $\mc{X}$, and
\item[(2)] $\{\mc{E}\} = \{\aff^r \times_\K \mc{X}\}$ if $\mc{E}$ is a vector bundle of rank $r$ on $\mc{X}$.
\end{enumerate}
The group $\Kstk$ has, in fact, a commutative ring structure, given on generators by
$\{\mc{X}\} \{\mc{Y}\} = \{ \mc{X} \times_\K \mc{Y} \}$.
\end{Definition}

\begin{Definition}
The Grothendieck group $\Ksnmod$ is the abelian group generated by the isomorphism classes $\{\mc{X}\}$
of algebraic $\K$-stacks with an action of $\boldsymbol{S}_n$ subject to the relations
\begin{enumerate}\itemsep=0pt
\item[(1)] $\{\mc{X}\} = \{\mc{Y}\} + \{\mc{X} \smallsetminus \mc{Y}\}$ if $\mc{Y}$ is a closed
$\boldsymbol{S}_n$-invariant substack of $\mc{X}$ with the restricted action, and
\item[(2)] $\{\mc{E}\} = \{\aff^r \times_\K \mc{X}\}$ if $\pi \colon \mc{E} \to \mc{X}$
is an $\boldsymbol{S}_n$-equivariant vector bundle of rank $r$ on $\mc{X}$, where
the $\boldsymbol{S}_n$-action on $\aff^r \times_\K \mc{X}$ is the extension of the given one on $\mc{X}$
by the trivial action.
\end{enumerate}
\end{Definition}

For our purposes we need to define the Grothendieck group associated to the category of $\ms{S}$-modules over~$B$,
where $(\ms{E},\diamond,I) = (\Algst, \times_\K, \operatorname{Spec}\K)$ (we change the target category
according to Remark~\ref{stvsalgst}).
First of all, let us recall (from Definition~\ref{def S-mod/B}) that a morphism $F\colon (X, e) \to (X', e')$
(respectively an isomorphism) of $\ms{S}$-modules over $B$ is a natural transformation
${F\colon X \Rightarrow X'}$ (respectively a natural isomorphism) such that the diagram
\[
\begin{tikzcd}
X \arrow[dr, Rightarrow, "e"] \arrow[rr, Rightarrow, "F"] & & X' \arrow[dl, Rightarrow, "e'"] \\
 & {\ms S}_B &
\end{tikzcd}
\]
is commutative.
As usual we say that two $\ms{S}$-modules over $B$, $(X, e)$ and $(X', e')$, are isomorphic if there exists an isomorphism
$F \colon (X, e) \to (X', e')$ and we denote with $\{X, e \}$ the isomorphism class of $(X, e)$.
By a vector bundle in the category of $\ms{S}$-modules over $B$ we mean a morphism
$\pi \colon (\mc{E}, e') \to (X,e)$ such that:
for every $n$, $\pi_n \colon \mc{E}_n \to X_n$ is a vector bundle (of some rank $r(n)$) and
it is $\boldsymbol{S}_n$-equivariant.

\begin{Definition}\label{def Ksmod}
The Grothendieck group $\Ksmod$ is the abelian group generated by the isomorphism classes $\{X, e\}$, of
$\ms{S}$-modules over $B$ in $(\Algst, \times_\K, \operatorname{Spec}\K)$,
subject to the relations
\begin{enumerate}\itemsep=0pt
\item[(1)] $\{X, e \} = \{Y, e_| \} + \{(X, e) \smallsetminus (Y, e_|)\}$, if $(Y, e_|)$ is a closed sub-$\ms{S}$-module of $(X, e)$, and
\item[(2)] $\{\mc{E}, e'\} = \{(\aff^{r(n)} \times_\K X_n, e_n)_n\}$, if $\pi \colon (\mc{E}, e') \to (X,e)$ is a vector bundle, where
the $\boldsymbol{S}_n$-action on $\aff^{r(n)} \times_\K X_n$ is the extension of the given one on $X_n$ by the trivial action.
\end{enumerate}
\end{Definition}

The Grothendieck group $\Ksmodr$ associated to the category of rooted $\ms{S}$-modules over~$B$ is defined
similarly and we omit the details.

Notice that $\Ksmod$ and $\KsmodK$ (with the products given by Day convolution)
have natural structures of algebras over $\Kstk$ induced by cartesian product.

\begin{Notation}\label{KsmodK}
In the following, we will consider also $\ms{S}$-modules over $\operatorname{Spec} (\K)$, $X$, where the
natural transformation $X \Rightarrow \ms{S}_{\operatorname{Spec} (\K)}$ is the one induced by the structure morphisms $X_n \to \operatorname{Spec} (\K)$, for all $n$,
and hence we omit it.
The corresponding Grothendieck group $K_0^{\ms{S}}({\Algst}/ \operatorname{Spec} (\K))$ will be denoted $\KsmodK$.

For any $\K$-stack $B$, there is a morphism of $\Kstk$-algebras
\[
\Ksmod \to \KsmodK
\]
that is induced by associating any $\ms{S}$-modules over $B$, $(X,e)$, with $X$.
We will say that two elements $a, b \in \Ksmod$ are \textit{congruent modulo} $B$, in symbol $a\equiv_B b$,
if they have the same image under this morphism.
\end{Notation}

Let $(X, e)$ be an ${\ms S}$-module over $B$, and let $(W, \varepsilon, \tilde{\varepsilon})$ be a rooted ${\ms S}$-module over~$B$
such that $W_0 =\varnothing$.
We observe that the class $\{ X \circ W, \varepsilon' \}$ of the composition product (see Definition~\ref{composition} and Remark~\ref{stvsalgst})
depends only on the classes $\{ X, e \} \in \Ksmod$ and $\{ W, \varepsilon, \tilde{\varepsilon} \} \in \Ksmodr$.
Furthermore, we have the following result, which is part of a theorem announced in \cite{GP06}
in the case of varieties (for a complete proof see \cite[Proposition~1.3.6]{Bagnarol}).

\begin{Proposition}\label{composition_on_Grothendieck_gr}
Let $ \bigl( \Ksmodr \bigr)_1$ be the sub-group generated by the classes $\{ W, \varepsilon, \tilde{\varepsilon} \}$ of rooted
$\ms{S}$-modules over $B$
with $W_0 =\varnothing$. Then the composition product descends to an operation
\[
\circ \colon \ \Ksmod \times \bigl(\Ksmodr \bigr)_1 \to {\Ksmod} ,
\]
which is $\Kstk$-linear in the first argument and such that $\{ X, e \} \circ \{ W, \varepsilon, \tilde{\varepsilon} \} =
\{ X \circ W, \varepsilon' \}$,
for any $(X, e)$, $(W, \varepsilon , \tilde{\varepsilon})$ respectively $\ms{S}$-module and rooted $\ms{S}$-module, over $B$.
\end{Proposition}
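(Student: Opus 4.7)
The plan is to verify that the stack-level composition defined by the coend formula \eqref{kellycomp} respects each of the generating relations of $\Ksmod$ and $\Ksmodr$, so that it descends to the stated operation on classes. First, the hypothesis $W_0 = \varnothing$ forces the shuffle expansion of $W^{\ast n}(m)$ to vanish for $n > m$, so that $(X \circ W)_m = \coprod_{n \leq m} \bigl(X_n \times_{B^n} W^{\ast n}(m)\bigr)/\boldsymbol{S}_n$ is a finite disjoint union of finite-group quotients of algebraic stacks, hence itself an algebraic $\K$-stack (see Remark~\ref{stvsalgst}); the $2$-functoriality of the coend then makes $\{X \circ W, \varepsilon'\}$ depend only on the isomorphism classes of $(X, e)$ and $(W, \varepsilon, \tilde\varepsilon)$.

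For relations in the first argument, a closed sub-$\ms S$-module $Y \hookrightarrow X$ base-changes to a closed $\boldsymbol{S}_n$-invariant substack of $X_n \times_{B^n} W^{\ast n}(m)$ whose complement is $(X \setminus Y)_n \times_{B^n} W^{\ast n}(m)$; taking $\boldsymbol{S}_n$-quotients and summing over $n$ yields a closed sub-$\ms S$-module $Y \circ W \hookrightarrow X \circ W$ with complement $(X \setminus Y) \circ W$, establishing the scissor identity in $\Ksmod$. An $\ms S$-vector bundle $\mc E \to X$ of rank $r(n)$ pulls back to a $\boldsymbol{S}_n$-equivariant rank-$r(n)$ vector bundle on each stratum; its $\boldsymbol{S}_n$-quotient is an ordinary vector bundle on the corresponding stratum of $(X \circ W)_m$, and applying the $\Kstk$ vector-bundle relation stratum by stratum, combined with scissor, gives $\{\mc E \circ W\} = \{(\aff^{r(n)} \times_\K X_n)_n \circ W\}$. $\Kstk$-linearity in the first argument is immediate, since Cartesian multiplication by a stack $Z$ with trivial $\boldsymbol{S}_n$-actions commutes with the fiber products and quotients appearing in \eqref{kellycomp}. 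Thus, for each fixed $\ms S$-module $W$, the assignment $\{X\} \mapsto \{X \circ W\}$ defines a $\Kstk$-linear map $\Ksmod \to \Ksmod$.

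The subtler step is showing that this map depends only on the class $\{W\} \in \bigl(\Ksmodr\bigr)_1$. Given a scissor decomposition $W = W' \sqcup W''$, each factor $W_{k_i}$ in the shuffle expansion splits as $W'_{k_i} \sqcup W''_{k_i}$, so $W_{k_1} \diamond \cdots \diamond W_{k_n}$ decomposes into $2^n$ pieces indexed by subsets $S \subseteq \{1, \dots, n\}$. The $\boldsymbol{S}_n$-action in the coend permutes these labels, and collecting terms by the cardinality $(a, b) = (|S|, n - |S|)$ produces a locally closed stratification of $(X \circ W)_m$ whose strata are fiber products of $X_{a+b}$ with suitable Day convolutions of copies of $W'$ and $W''$, modulo a residual $\boldsymbol{S}_a \times \boldsymbol{S}_b$-action. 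Each such stratum is built from $X$, $W'$, $W''$ using only fiber products, finite coproducts, and finite-group quotients, so by the first-argument analysis its class depends only on $\{X\}$, $\{W'\}$, $\{W''\}$. A bookkeeping argument using the associativity of this stratification under iterated refinements of the scissor shows that the resulting expression depends only on the total class $\{W\} = \{W'\} + \{W''\}$ rather than on the particular decomposition. The vector-bundle relation on $W$ is handled analogously, using multiplicativity of ranks under $\ast$. The main obstacle is precisely this second-argument descent: carefully tracking how the $\boldsymbol{S}_n$-action on $W^{\ast n}(m)$ interacts with the $2^n$-fold splitting, and verifying that the resulting class in $\Ksmod$ is manifestly a function of $\{W'\}$ and $\{W''\}$ alone, compatible with iterated scissors. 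This is the stack-theoretic analog of the well-definedness of plethysm of symmetric functions, and is the content of \cite[Proposition~1.3.6]{Bagnarol}.
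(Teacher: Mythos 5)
Your first two paragraphs are sound and consistent with what the paper needs: $W_0=\varnothing$ guarantees that the coend \eqref{kellycomp} only involves finite coproducts and finite-group quotients, hence stays in $\Algst$ (Remark~\ref{stvsalgst}), and the descent in the first argument (scissor relation, vector bundles, $\Kstk$-linearity) is the easy half, which the paper dismisses with ``by linearity''. Moreover, your stratification of $X\circ W$ along a closed/open decomposition $W'\subset W$, $W''=W\setminus W'$, indexed by how many branches land in $W'$ versus $W''$, is in substance the same identity the paper exploits, namely $X\circ(Z\amalg U)\cong\int^{i,j}X_{i+j}\times_{B^{i+j}}\bigl(Z^{\ast i}\ast U^{\ast j}\bigr)$ with $Z=W'$, $U=W''$.

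The gap lies in what you do with this identity. First, the proposition asserts an operation on all of $\bigl(\Ksmodr\bigr)_1$, and since $\circ$ is not additive in the second variable you must actually \emph{define} $\{X,e\}\circ b$ when $b=\{Y\}-\{Z\}$ with $Z$ not a closed sub-$\ms{S}$-module of $Y$; this is the main content of the paper's proof, which turns the identity above into a recursion: the degree-$k$ part of $\{X\}\circ\{Y\setminus Z\}$ involves the second argument only in degrees $<k$, so one sets $\{X\}\circ_0 b=\{X_0\}$ and $\{X\}\circ_k b=\{(X\circ Y)_k\}-h_Z(X\circ_{k-1}b)_{(0,k)}$ for a suitable operator $h_Z$, and sums over $k$. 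Your proposal never produces such a definition; it only discusses effective classes. Second, your justification that each stratum's class ``depends only on $\{X\},\{W'\},\{W''\}$ by the first-argument analysis'' does not follow: the first-argument analysis fixes a geometric $W$ and varies $X$, whereas a class of the form $\bigl\{\bigl(X_{a+b}\times_{B^{a+b}}(W'^{\ast a}\ast W''^{\ast b})\bigr)/(\boldsymbol{S}_a\times\boldsymbol{S}_b)\bigr\}$ is not automatically a function of the classes of its constituents (fiber products over $B$ and finite quotients are not determined at the level of $K_0$), so descent in the $W$-variables requires its own degree-by-degree argument. That argument is precisely the ``bookkeeping'' you leave unproved and defer to \cite[Proposition~1.3.6]{Bagnarol}; the paper defers the complete verification there as well, but it does supply the recursive construction that makes the operation meaningful on the whole group, which is the step missing from your proposal. (A minor point: in the vector-bundle relation for $W$, ranks add rather than multiply under the external products appearing in $\ast$.)
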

\begin{proof}
We follow the steps in \cite[Proposition~1.3.6]{Bagnarol}. By linearity we only need to define
$\{ X, e \} \circ b$ for every $\ms{S}$-modules over $B$, $(X, e)$, and every $b \in \bigl(\Ksmodr \bigr)_1$.

First write $b$ as $\{ Y, \varepsilon, \tilde{\varepsilon} \} - \{ Z, \varepsilon', \tilde{\varepsilon}' \}$
where $(Y, \varepsilon, \tilde{\varepsilon})$, $(Z, \varepsilon', \tilde{\varepsilon}' )$ are rooted
$\ms{S}$-modules over $B$. Moreover, assume that $(Z, \varepsilon', \tilde{\varepsilon}' )$
is a closed sub-$\ms{S}$-modules over $B$ of $(Y, \varepsilon, \tilde{\varepsilon})$.
Let $U:=Y\setminus Z$, and let $\varepsilon_|$, $\tilde{\varepsilon}_|$ the restrictions of
$\varepsilon$, $\tilde{\varepsilon}$ to $U$. Then we have
\begin{align*}
\{ X, e \} \circ \{ Y, \varepsilon, \tilde{\varepsilon} \} = \{ X, e \} \circ ( \{ Z, \varepsilon', \tilde{\varepsilon}' \}
+ \{ U, \varepsilon_|, \tilde{\varepsilon}_| \} )
 =
\{ X, e \} \circ ( \{ (Z, \varepsilon', \tilde{\varepsilon}' ) \amalg
 ( U, \varepsilon_|, \tilde{\varepsilon}_|) \} ) .
\end{align*}
Using the formalism of coend, we have
\[
(X, e) \circ ( (Z, \varepsilon', \tilde{\varepsilon}' ) \amalg
 ( U, \varepsilon_|, \tilde{\varepsilon}_|) ) = \int^{i,j} X_{i+j} \times_{B^{i+j}} Z^{\ast i} \ast U^{\ast j} ,
\]
which is in turn isomorphic to
\[
(X, e) \circ ( U, \varepsilon_|, \tilde{\varepsilon}_|) \amalg
\int^{i \geq 1,j} X_{i+j} \times_{B^{i+j}} Z^{\ast i} \ast U^{\ast j} .
\]
Therefore, in this case, expanding $Z^{\ast i} \ast U^{\ast j}$, $\{ X, e \} \circ b = \{ (X, e), \circ (U, \varepsilon_|, \tilde{\varepsilon}_|)\}$
is equal to
\begin{align*}
\{ (X, e) \circ (Y, \varepsilon, \tilde{\varepsilon} )\} -
\left\{
\int^{i\geq 1 , m, n} \ms{S}(m+n, \_ ) \times \bigl(Z^{\ast i}\bigr)_m \times_{B^i}
\left( \int^{j} X_{i+j} \times_{B^{j}} \bigl((Y\setminus Z)^{\ast j}\bigr)_n \right)
\right\}\! ,
\end{align*}
where $X_{i+j} \to {B^{j}}$ is the composition of $e_{i+j} \colon X_{i+j} \to B^{i+j}$
with the projection $B^{i+j} \to B^j$ to the last $j$-components, while the first $i$ components of $e_{i+j}$
give the map to $B^i$.
Note that, for any $k \in \mathbb{N}$, $(\{ X, e \} \circ b)_k$ depends on
$(Y\setminus Z)^{\ast j}_n$ only for $n<k$. In particular, we have
\begin{align*}
&(\{ X, e \} \circ b)_0= \{ X_0 \}, \\
 &(\{ X, e \} \circ b)_1= \{ (X \circ Y)_1 \} - \{ (X\circ Z)_1 \} .
\end{align*}
In these equations (and from now on in the proof), we have omitted the morphisms to $B$ to simplify the notation,
they are those induced by
$\varepsilon$, $\tilde{\varepsilon}$ and $\varepsilon'$, $\tilde{\varepsilon}'$ from Proposition~\ref{crsmb}.

This suggests a recursive approach for defining $\{ X, e \} \circ b$ when
$b = \{ Y, \varepsilon, \tilde{\varepsilon} \} - \{ Z, \varepsilon', \tilde{\varepsilon}' \}$
and $\{ Z, \varepsilon', \tilde{\varepsilon}' \}$ is not a
 closed sub-$\ms{S}$-modules over~$B$ of $(Y, \varepsilon, \tilde{\varepsilon})$.

Let us consider functors $T \colon \ms{S}\times \ms{S} \to \Algst$ that associate, for any $i, j \in \mathbb{N}$,
a stack $T(i, j)$ together with an $\boldsymbol{S}_i \times \boldsymbol{S}_j$-equivariant morphism $T(i, j) \to B^i \times B^j$.
For any such $T$, let us define
\[
h_Z(T)_{(j,l)} := \left\{ \int^{i\geq 1, m, n}\ms{S}(m+n, l) \times \bigl(Z^{\ast i}\bigr)_m \times_{B^i} T(i+j, n) \right\} .
\]
Using this we define $\{ X, e \} \circ_k b \in K_0^{\boldsymbol{S}_k}({\Algst})$ inductively as follows:
\begin{align*}
&\{ X, e \} \circ_0 b:= \{ X_0 \} , \\
&\{ X, e \} \circ_k b:= \{ (X \circ Y)_k \} - h_Z (X \circ_{k-1} b)_{(0, k)} .
\end{align*}
Finally, we define $\{ X, e \} \circ b := \sum_{k\geq 0} \{ X, e \} \circ_k b$.
\end{proof}

\section{Stratification}\label{section5}

In this section, we introduce a stratification of $\mathcal B^{\rm bal}_{0,n}(G)$
according to the combinatorial type of its objects. This will be achieved using the notion of \textit{gerby tree} associated to
$\mathcal B G$.

Let us first recall the following definitions.

\begin{Definition}[\cite{BM96}] A \textit{graph} $\tau$ is a triple $(F_\tau, j_\tau, R_\tau)$, where
\begin{itemize}\itemsep=0pt
\item $F_\tau$ is a finite set, called the set of flags of $\tau$,
\item $j_\tau \colon F_\tau \to F_\tau$ is an involution, and
\item $R_\tau \subset F_\tau \times F_\tau$ is an equivalence relation.
\end{itemize}
\end{Definition}
Associated to every graph $\tau = (F_\tau, j_\tau, R_\tau)$, there is a triple of sets $(V_\tau, E_\tau, L_\tau)$,
which are defined as follows:
\begin{itemize}\itemsep=0pt
\item $V_\tau$ is the quotient of $F_\tau$ by $R_\tau$, it is called the set of vertices of $\tau$;
\item $E_\tau$ is the set of orbits of $j_\tau$ with two elements, it is called the set of edges of $\tau$;
\item $L_\tau$ is the set of fixed points of $j_\tau$, it is called the set of leaves of $\tau$.
\end{itemize}
Finally, we associate to every vertex $v\in V_\tau$
\begin{enumerate}\itemsep=0pt
\item[(1)] the subset $F_\tau (v) \subset F_\tau$ of flags whose equivalence class under $R_\tau$ is $v$, and
\item[(2)] the valence $n(v):= \# F_\tau (v)$ of $v$.
\end{enumerate}

\begin{Definition}\label{geometric realization}
The {\it geometric realization} $|\tau|$ of a graph $\tau$ is the topological space constructed as follows.
Let us consider the disjoint union of intervals and singletons
\[
T = \bigsqcup_{i\in L_\tau} [0_i,{1/2}_i) \sqcup \bigsqcup_{i\in F_\tau \setminus L_\tau} [0_i,{1/2}_i] \sqcup \bigsqcup_{v\in V_\tau} \{ v\} .
\]
For each $v\in V_\tau$, identify the elements of the set $\{ 0_i \mid i\in F_\tau (v)\}$ with $v$.
For each edge ${\{ i_1, i_2 \} \in E_\tau}$, identify ${1/2}_{i_1}$ with ${1/2}_{i_2}$. In this way, one obtains a set $|\tau|$ together with a~surjection
$T\to |\tau|$. Then the topology on~$|\tau|$ is the quotient topology.
\end{Definition}

\begin{Definition}
A {\it tree} is a graph $\tau$ such that its geometric realization is simply connected,
i.e., $|\tau|$ is connected and $\# V_\tau = \# E_\tau +1$.
An $N$-tree is a tree $\tau$ together with a bijection $l_\tau \colon L_\tau \to N$.
A ($N$-)tree $\tau$ is \textit{stable} if $n(v) \geq 3$, for any $v\in V_\tau$.
As usual, when $N=\{ 1, \dots , n\}$ we will speak of $n$-trees instead of $N$-trees.
\end{Definition}

Let $\bar{\mathcal I}_{\boldsymbol{\mu}} (\mathcal B G)$ be the stack of cyclotomic gerbes in $\mathcal B G$,
let $\bar{I}_{\boldsymbol{\mu}} (\mathcal B G)$ be its coarse moduli space (cf.\ Section~\ref{recall_sub}), and let
$\iota\colon \cyinst \to \cyinst$ be the involution induced by the
automorphisms ${\boldsymbol{\mu}}_r \to {\boldsymbol{\mu}}_r$, $\zeta \mapsto \zeta^{-1}$, for $r\geq 1$ (cf.\ Example~\ref{exSmod}).
In order to simplify the notation, we will use the same symbol
$\iota\colon \bar{I}_{\boldsymbol{\mu}} (\mathcal B G) \to \bar{I}_{\boldsymbol{\mu}} (\mathcal B G)$
for the morphism induced by $\iota$ on the coarse moduli space.
The following definition is similar to \cite[Definition~2.14]{AJT16}.

\begin{Definition}\label{gerbytree}
A {\it gerby tree} $\tilde \tau$ associated to $\mathcal B G$ is given by a tree $\tau =(F_\tau, j_\tau, R_\tau)$
together with a map $\mf{m} \colon F_\tau \to \bar{I}_{\boldsymbol{\mu}} (\mathcal B G)$ such that $\mf{m}(i) = \iota \circ \mf{m}(i')$,
whenever the flags $i, i' \in F_\tau$ form an edge $\{ i, i'\} \in E_\tau$.
We will denote with $\mf{o} \colon F_\tau \to \mathbb Z$ the function that associates to any flag $i\in F_\tau$
the integer $r\in \mathbb Z$ such that $\mf{m}(i ) \in \bar{I}_{{\boldsymbol{\mu}}_r} (\mathcal B G)$.
A gerby $N$-tree is a gerby tree $\tilde \tau$ together with a~bijection $l_\tau \colon L_\tau \to N$.
A gerby ($N$-)tree $\tilde \tau$ is \textit{stable}, if the underlying ($N$-)tree is stable.
\end{Definition}

In the following, with a slight abuse of notation, for a gebry tree~$\tilde \tau$,
we denote either with $V_{\tilde \tau}$ (resp.\ $E_{\tilde \tau}$, $L_{\tilde \tau}$)
or with $V_\tau$ (resp.\ $E_\tau$, $L_\tau$) the set of vertices (resp.\ edges, leaves) of the underlying tree $\tau$.
Furthermore, we denote with $E'_\tau$ (resp.\ $E'_{\tilde \tau}$) the set $F_\tau \setminus L_\tau$.

\begin{Definition}
An \textit{isomorphism of graphs} $\varphi \colon \tau \to \sigma$ is a bijective map $\varphi_F \colon F_\tau \to F_\sigma$
such that $\varphi_F \circ j_\tau = j_\sigma \circ \varphi_F$ and $(\varphi_F \times \varphi_F)(R_\tau) = R_\sigma$.
In particular $\varphi_F$ induces a bijection $\varphi_V \colon V_\tau \to V_\sigma$, which commutes with the projections,
and a bijection $\varphi_L \colon L_\tau \to L_\sigma$.

An \textit{isomorphism of $N$-trees} $\varphi \colon (\tau, l_\tau) \to (\sigma, l_\sigma)$ is an isomorphism of the underlying trees
$\varphi \colon \tau \to \sigma$ such that $l_\tau = l_\sigma \circ \varphi_L$.
The automorphism group of a tree $\tau$ (resp.\ an $N$-tree $(\tau, l_\tau)$) will be denoted ${\rm Aut}(\tau)$
(resp.\ ${\rm Aut}(\tau, l_\tau)$).

An \textit{isomorphism of gerby trees} (resp. \textit{$N$-trees}) $\tilde \varphi \colon \tilde \tau \to \tilde \sigma$ is an isomorphism of the underlying trees
(resp.\ $N$-trees) $\varphi \colon \tau \to \sigma$ such that $\mf{m}_\sigma \circ \varphi_F =\mf{m}_\tau$.
The automorphism group of a gerby ($N$-)tree $\tilde \tau$ will be denoted ${\rm Aut}(\tilde \tau)$.

In this article, we choose a representative
for any equivalence class of stable gerby $n$-trees associated to $\mathcal B G$
and we denote with $\tilde{\Gamma}_{0,n}$ the set of these representatives.
\end{Definition}

We recall the following result, for which we provide a proof (following~\cite{GP06}) for lack of a~suitable reference.
\begin{Lemma}
The set $\tilde{\Gamma}_{0,n}$ of isomorphism classes of stable gerby $n$-trees associated to $\mathcal B G$ is finite.
\end{Lemma}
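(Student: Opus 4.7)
The plan is to reduce the finiteness of $\tilde{\Gamma}_{0,n}$ to two separate finiteness statements: one for the underlying stable $n$-trees, and one for the gerby decorations on any fixed such tree. First I would forget the gerby structure and bound the combinatorial complexity of the underlying stable $n$-trees $(\tau,l_\tau)$ using the stability condition.

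The key bound is a standard Euler-characteristic-type count. Since $\tau$ is a tree, $\#V_\tau = \#E_\tau + 1$; the double counting of flags at vertices gives $\sum_{v\in V_\tau} n(v) = \#F_\tau = 2\#E_\tau + \#L_\tau = 2\#E_\tau + n$. Stability ($n(v)\geq 3$ for every vertex) then yields
\[
3\#V_\tau \;\leq\; \sum_{v \in V_\tau} n(v) \;=\; 2(\#V_\tau - 1) + n,
\]
so $\#V_\tau \leq n-2$, $\#E_\tau \leq n-3$ and $\#F_\tau \leq 3n-6$. Because there are only finitely many isomorphism classes of tuples $(F_\tau, j_\tau, R_\tau, l_\tau)$ with $\#F_\tau$ bounded and $l_\tau$ a bijection from the fixed-point set of $j_\tau$ to the fixed set $\{1,\dots,n\}$, this proves that the set $\Gamma_{0,n}$ of isomorphism classes of stable $n$-trees is finite.

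Next I would address the decorations. By the discussion in Section~\ref{recall_sub}, the coarse moduli space $\bar{I}_{\boldsymbol{\mu}}(\mathcal{B}G)$ is in bijection with the set of conjugacy classes of elements of $G$, which is a \emph{finite} set since $G$ is finite. Therefore, for any fixed stable $n$-tree $\tau$, the number of maps $\mf{m} \colon F_\tau \to \bar{I}_{\boldsymbol{\mu}}(\mathcal{B}G)$ is at most $(\#\bar{I}_{\boldsymbol{\mu}}(\mathcal{B}G))^{\#F_\tau} \leq (\#\bar{I}_{\boldsymbol{\mu}}(\mathcal{B}G))^{3n-6}$, hence finite. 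In particular, the subset of such maps satisfying the edge-compatibility $\mf{m}(i) = \iota\circ \mf{m}(i')$ for every edge $\{i,i'\}$ is also finite.

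Combining the two bounds: $\tilde{\Gamma}_{0,n}$ surjects onto $\Gamma_{0,n}$ with finite fibres, both sets finite, so $\tilde{\Gamma}_{0,n}$ is finite. The only mild subtlety, which I would address explicitly, is that we are counting \emph{isomorphism} classes of gerby $n$-trees, so after fixing a representative of the underlying $n$-tree one should quotient the set of admissible decorations $\mf{m}$ by the action of the finite group $\operatorname{Aut}(\tau, l_\tau)$; but quotienting a finite set by a finite group still yields a finite set, so this step poses no real obstacle.
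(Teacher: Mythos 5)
Your proposal is correct and follows essentially the same route as the paper: the identical flag-counting argument (tree relation plus stability giving $\# F_\tau \leq 3(n-2)$), after which finiteness of the decorations follows because $\bar{I}_{\boldsymbol{\mu}}(\mathcal{B}G)$ is the finite set of conjugacy classes of $G$. The only difference is that you spell out this last step (and the harmless quotient by the finite automorphism group), which the paper simply asserts as the finiteness of isomorphism classes of gerby $n$-trees with a bounded number of flags.
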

\begin{proof}
Let $\tilde{\tau} \in \tilde{\Gamma}_{0,n}$.
By definition, $|\tau|$ is connected and
\[
\# V_\tau = \# E_\tau +1 .
\]
The right-hand side of the previous equation is equal to $\frac{1}{2}\left( \sum_{v\in V_\tau} n(v) -n \right) +1$,
hence
\[
n-2= \sum_{v\in V_\tau} \left( n(v)-2 \right) .
\]
The stability condition ($n(v)\geq 3$, for every $v\in V_\tau$) implies that $\# V_\tau \leq n-2$,
therefore
\[
\# F_\tau = \sum_{v\in V_\tau} n(v)= n-2 +2\# V_\tau \leq 3(n-2) .
\]
The claim follows since the isomorphism classes of gerby $n$-trees associated to $\mathcal B G$, with a fixed number of flags, is finite.
 \end{proof}

Given a gerby tree $\tilde \tau$, in the literature there are notions of $\tilde \tau$-marked pre-stable curves and
of $\tilde \tau$-marked twisted stable maps to $\mathcal B G$ (see, e.g.,
\cite[Definitions~2.18 and~2.20]{AJT16}). In this article, we give a different definition of
$\tilde \tau$-marked twisted stable maps to $\mathcal B G$, which is more convenient for our purposes.
To this aim, for any $v\in V_{\tilde \tau}$ let
$\boldsymbol{\mf{m}}(v):= (\mf{m}(i))_{i\in F_{\tilde \tau}(v)} \in \bar{I}_{\boldsymbol{\mu}} (\mathcal B G)^{F_{\tilde \tau}(v)}$.
Then consider the open and closed sub-stack $\balmv$ of $\mc{B}^\mr{bal}_{0,F_{\tilde\tau}(v)}(G)$
(Definition~\ref{balc}).
The evaluation maps ${\rm ev}^i_{F_{\tilde\tau}(v)}$, for $i\in E'_{\tilde \tau} \cap F_{\tilde\tau}(v)$, yield a morphism
\[
{\rm ev}_{\tilde \tau , E} \colon \ \prod_{v\in V_{\tilde \tau}} \balmv \to \cyinst^{E'_{\tilde \tau}} .
\]
Furthermore, let us define
\[
\cyinst^{(\tilde \tau)} := \bigl\{ (\mathcal G_i, \phi_i)_{i\in E'_{\tilde \tau}} \in \cyinst^{E'_{\tilde \tau}} \mid
(\mathcal G_i, \phi_i) = \iota( \mathcal G_{i'}, \phi_{i'}) , \, \forall \{ i, i' \} \in E_{\tilde \tau} \bigr\} .
\]
\begin{Definition}\label{tau_tilde_marked_cover}
Let $\tilde \tau$ be a gerby tree associated to $\mathcal B G$.
The \textit{stack of $\tilde \tau$-marked twisted $G$-covers of genus $0$}, which will be denoted $\mathcal B_{\tilde \tau}(G)$,
is defined as the fiber product
\begin{equation*}\label{tau_tilde_marked_cover_fiber_product}
\cyinst^{(\tilde \tau)} \times_{\cyinst^{E'_{\tilde \tau}}} \prod_{v\in V_{\tilde \tau}} \balmv ,
\end{equation*}
with respect to the inclusion $\cyinst^{(\tilde \tau)} \to \cyinst^{E'_{\tilde \tau}}$
and the morphism ${\rm ev}_{\tilde \tau , E}$.

Furthermore, we define $\mathcal B_{\tilde \tau}^{\rm sm}(G)$ to be the fiber product
\[
\cyinst^{(\tilde \tau)} \times_{\cyinst^{E'_{\tilde \tau}}} \prod_{v\in V_{\tilde \tau}} \balmvsm ,
\]
where $\balmvsm \subset \balmv$ is the open sub-stack consisting of twisted $G$-covers
of smooth curves (see Notation \ref{balsm}).
\end{Definition}

\begin{Remark}
By definition, an object of $\mathcal B_{\tilde \tau}(G)$ over a scheme $T$ is a triple
\[
\bigl( (\mathcal G_i, \phi_i)_{i\in E'_{\tilde \tau}}, (\mathcal{C}_v, (\Sigma_k)_{k\in F_{\tilde \tau}(v)}, f_v)_{v\in V_{\tilde \tau}} ,
 (\Phi_i, \rho_i)_{i\in E'_{\tilde\tau}} \bigr) ,
\]
where $(\mathcal G_i, \phi_i)_{i\in E'_{\tilde \tau}} \in \cyinst^{(\tilde \tau)}(T)$,
$(\mathcal{C}_v, (\Sigma_k)_{k\in F_{\tilde \tau}(v)}, f_v) \in \balmv (T)$, and
 $(\Phi_i, \rho_i) \colon (\mathcal G_i, \phi_i) \! \to (\Sigma_i, {f_v}_{|\Sigma_i})$ is an isomorphism, for every
$v \in V_{\tilde\tau}$, $i\in F_{\tilde\tau}(v) \cap E'_{\tilde \tau}$.

If $\{ i, i'\} \in E_{\tilde \tau}$, since $(\mathcal G_i, \phi_i) = \iota( \mathcal G_{i'}, \phi_{i'})$
(by definition of $\cyinst^{(\tilde \tau)}$), we get an isomorphism
\[
(\Phi_{i'}, \rho_{i'}) \circ (\Phi_i, \rho_i)^{-1} \colon \ (\Sigma_i, {f_v}_{|\Sigma_i}) \to \iota (\Sigma_{i'}, {f_{v'}}_{|\Sigma_{i'}}) ,
\]
where $v'\in V_{\tilde \tau}$ is such that $i' \in F_{\tilde \tau}(v')$.
\end{Remark}

It follows from this remark that, for any stable gerby $n$-tree associated to $\mathcal{B}G$, $\tilde \tau$, we can apply
the gluing construction of \cite[Section~5 and Appendix~A]{AGV08} that yields a morphism
\begin{equation}\label{xi_tildetau}
\xi_{\tilde \tau} \colon \ \mathcal B_{\tilde \tau}(G) \to \mathcal B^{\rm bal}_{0,n}(G) .
\end{equation}

\begin{Proposition}\label{xitautilde_proper}
The morphism $\xi_{\tilde \tau}$ in \eqref{xi_tildetau} is representable and proper.
\end{Proposition}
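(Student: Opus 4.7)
The plan is to separate the two claims. For properness, the idea is to exhibit both source and target as proper stacks over $\operatorname{Spec}\K$, so that $\xi_{\tilde\tau}$ is proper for abstract reasons; for representability, the strategy is to verify faithfulness on automorphism groups, which reduces to the observation that a fixed combinatorial type $\tilde\tau$ rigidifies the decomposition of a glued cover into its components.

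First, I would note that $\cyinst$ is finite étale over $\operatorname{Spec}\K$: by the description in Section~\ref{recall_sub}, its geometric points correspond to conjugacy classes of finite-order elements of $G$, with centralizers as automorphism groups. Hence the condition defining $\cyinst^{(\tilde\tau)}$ inside $\cyinst^{E'_{\tilde\tau}}$ is clopen. Each factor $\balmv$ appearing in the fiber product defining $\mathcal{B}_{\tilde\tau}(G)$ is a clopen substack of $\mathcal{B}^{\rm bal}_{0,F_{\tilde\tau}(v)}(G)$ by Definition~\ref{balc}, hence proper over $\operatorname{Spec}\K$ by the theorem quoted in Section~\ref{recall}. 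Therefore $\mathcal{B}_{\tilde\tau}(G)$ is a clopen substack of the proper stack $\prod_{v\in V_{\tilde\tau}}\balmv$ and is itself algebraic and proper over $\operatorname{Spec}\K$. Since the target $\mathcal{B}^{\rm bal}_{0,n}(G)$ is also proper, $\xi_{\tilde\tau}$ is automatically proper by the standard cancellation argument: it factors through its graph in $\mathcal{B}_{\tilde\tau}(G)\times_\K \mathcal{B}^{\rm bal}_{0,n}(G)$, which is a closed immersion because the target is separated, followed by the projection to the target, which is the base change of the proper structure morphism of the source.

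The real content is representability, which amounts to checking that $\xi_{\tilde\tau}$ is faithful on automorphism groups at geometric points. For a geometric point $\bigl((\mathcal{G}_i,\phi_i),(\mathcal{C}_v,\Sigma_v,f_v),(\Phi_i,\rho_i)\bigr)$ with image $(\mathcal{C},\Sigma,f)$, an automorphism is a family $(\varphi_v)_{v\in V_{\tilde\tau}}$ of automorphisms of the components, intertwined by the gluing data $(\Phi_i,\rho_i)$ along edges. The key observation is that, $\tilde\tau$ being fixed, the partial normalization of $\mathcal{C}$ at the nodes labeled by edges of $\tilde\tau$ canonically recovers the triples $(\mathcal{C}_v,\Sigma_v,f_v)$ together with their markings indexed by the flags; any automorphism of $(\mathcal{C},\Sigma,f)$ therefore restricts uniquely to a family $(\varphi_v)$, and if it is the identity so is each $\varphi_v$. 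The main subtlety, which I would handle carefully, is that this inverse to the gluing must be functorial in families over arbitrary test schemes $T$, not just at geometric points; this is essentially the content of the fact that the gluing construction of \cite[Section~5 and Appendix~A]{AGV08} realizes an equivalence of $\mathcal{B}_{\tilde\tau}(G)$ onto its essential image in $\mathcal{B}^{\rm bal}_{0,n}(G)$ after passing to $\tilde\tau$-marked objects, so no automorphism data is lost in either direction.
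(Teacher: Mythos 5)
Your overall division of labor (properness by a cancellation argument, representability by faithfulness on automorphisms) is reasonable, and the properness half is in substance the same as the paper's, which also deduces properness of $\xi_{\tilde\tau}$ from properness of $\mathcal{B}_{\tilde\tau}(G)$ and of $\mathcal{B}^{\rm bal}_{0,n}(G)$ (quoting \cite{AV02}) together with separatedness of the target. However, two of your intermediate steps treat stacky morphisms as if they were immersions, and both are false as stated. First, $\mathcal{B}_{\tilde\tau}(G)$ is \emph{not} a clopen substack of $\prod_{v\in V_{\tilde\tau}}\balmv$: the fiber product in Definition~\ref{tau_tilde_marked_cover} is a $2$-fiber product whose objects carry the gluing isomorphisms $(\Phi_i,\rho_i)$, and the ``inclusion'' $\cyinst^{(\tilde\tau)}\to\cyinst^{E'_{\tilde\tau}}$ is, up to equivalence, a twisted diagonal of the gerbe $\cyinst$; since the points of $\cyinst$ have nontrivial automorphism groups (and $\cyinst\to\operatorname{Spec}\K$ is proper \'etale but not finite in the representable sense), this map is finite of degree $>1$, not an open-and-closed immersion, and matching of conjugacy classes is only part of the datum. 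The conclusion you need survives: $\mathcal{B}_{\tilde\tau}(G)\to\prod_v\balmv$ is proper (a base change of the proper morphism $\cyinst^{(\tilde\tau)}\to\cyinst^{E'_{\tilde\tau}}$), hence $\mathcal{B}_{\tilde\tau}(G)$ is proper over $\operatorname{Spec}\K$. Similarly, the graph of $\xi_{\tilde\tau}$ is not a closed immersion (the diagonal of a Deligne--Mumford stack is finite, not an immersion), but it is proper, which is all the cancellation argument requires.

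For representability the paper takes a different route: it orders the edges, proceeds by induction on their number, and reduces to the one-node gluing morphism, whose representability is exactly \cite[Proposition~5.2.1(1)]{AGV08}. Your direct argument---representability equals faithfulness, i.e.\ triviality of the relative inertia, which for these Deligne--Mumford stacks can be tested on geometric points, and faithfulness checked by restricting an automorphism of the glued object to the components via partial normalization---is a legitimate alternative and handles all edges at once. The genuine weak point is your final sentence: you dispose of the family-level issue by asserting that the gluing construction of \cite{AGV08} realizes an equivalence of $\mathcal{B}_{\tilde\tau}(G)$ onto its essential image. That is full faithfulness onto the image, which is essentially the content of Proposition~\ref{xitautilde_immersion} (the closed immersion), proved in the paper afterwards by a genuinely different and more delicate argument (Kuranishi families together with Lemma~\ref{auttau_trivial}); it cannot simply be cited from \cite{AGV08}, and it is stronger than what you need---fullness is exactly where glued twisted curves could a priori acquire ghost automorphisms at the new nodes. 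For faithfulness you only need that, in families over an arbitrary base $T$, each component embeds as a closed substack of the glued curve and that the automorphism glued from $(\varphi_v)$ restricts back to the $\varphi_v$; this is the functoriality of the pushout construction of \cite[Appendix~A]{AGV08}. Replace your last appeal by that (or by the paper's citation of \cite[Proposition~5.2.1(1)]{AGV08} together with induction on edges) and the argument closes.
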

\begin{proof}
To simplify the notation, in this proof we denote with $e_i$ the evaluation morphism ${\rm ev}^i_N$,
whenever the set $N$ is clear from the context.

Let us choose an ordering of the edges of $\tilde \tau$,
$E_{\tilde \tau} = \{ \{ i_1, i_1' \}, \dots , \{ i_k, i_k' \} \}$. Hence
$F_{\tilde \tau} \setminus L_{\tilde \tau}= \{ i_1, i_1', \dots , i_k, i_k' \}$ and $j_{\tilde \tau}(i_\ell) = i_{\ell}'$, for $\ell = 1, \dots , k$.
Notice that, since $\tau$ is a tree, $\# V_{\tilde \tau} = k+1$ and, for any $\ell$, $[i_\ell] \not= [i_\ell']$.
In particular there is an $\ell$ such that either the vertex $[i_\ell]$ or $[i_\ell']$ belongs only to the edge $\{i_\ell, i_\ell'\}$
and we suppose, without loss of generality, that $[i_k']$ satisfies this condition.

Let us proceed by induction on $k\geq 1$ (if $k=0$, $\xi_{\tilde \tau}$ is the identity).
For $k=1$, we have that
\[
{\mathcal B}_{\tilde \tau}(G) \cong {\mathcal B}^{\rm bal}_{0,F_{\tilde \tau}([i_1])}(G) \tensor[_{\check e_{i_1}}]{\times}{_{e_{i_1}'}}
{\mathcal B}^{\rm bal}_{0,F_{\tilde \tau}([i_1'])}(G) ,
\]
where $\check e_{i_1} := \iota \circ e_{i_1}$, and $\xi_{\tilde \tau}$ is the morphism in \cite[Proposition 5.2.1\,(1)]{AGV08}.
Therefore, $\xi_{\tilde \tau}$ is representable by the aforementioned proposition.
Finally, $\xi_{\tilde \tau}$ is proper since the stacks $\mathcal B^{\rm bal}_{0,n}(G)$
are proper \cite[Theorem~1.4.1]{AV02}, in particular, they are separated and also $\mathcal B_{\tilde \tau}(G)$
is proper.

Let us suppose that the result is true for gerby trees with $k-1$ edges and
let $\tilde \tau$ be a gerby $n$-tree with $\# E_{\tilde \tau} = k$.
Let ${\tilde \tau}_1$ be the gerby tree with
\begin{itemize}\itemsep=0pt
\item $F_{{\tilde \tau}_1}:= [i_1] \cup \dots \cup [i_{k}] \subset F_{\tilde \tau}$;
\item $j_{{\tilde \tau}_1}$ equal to $j_{\tilde \tau}$ on $F_{{\tilde \tau}_1} \setminus \{ i_{k} \}$
and $j_{{\tilde \tau}_1}(i_{k})=i_{k}$;
\item $R_{{\tilde \tau}_1} = R_{\tilde \tau} \cap (F_{{\tilde \tau}_1} \times F_{{\tilde \tau}_1})$;
\item $\mf{m}_{{\tilde \tau}_1} ={\mf{m}_{\tilde \tau}}_{|F_{{\tilde \tau}_1}}$.
\end{itemize}
Furthermore, let $\tilde \tau_2$ be the gerby tree with
\begin{itemize}\itemsep=0pt\samepage
\item $F_{{\tilde \tau}_2}:= [i_k'] \subset F_{\tilde \tau}$;
\item $j_{{\tilde \tau}_2}$ equal to the identity;
\item $R_{{\tilde \tau}_2} = F_{{\tilde \tau}_2} \times F_{{\tilde \tau}_2}$;
\item $\mf{m}_{{\tilde \tau}_2} ={\mf{m}_{\tilde \tau}}_{|F_{{\tilde \tau}_2}}$.
\end{itemize}
Since $\tilde \tau_1$ has $k-1$ edges, $\xi_{\tilde \tau_1}\colon \mathcal B_{\tilde \tau_1}(G) \to \mathcal B^{\rm bal}_{0,m}(G)$
is representable and proper, $m:= \# L_{\tilde \tau_1}$. Moreover,
\[
\mathcal B_{\tilde \tau}(G) \cong \mathcal B_{\tilde \tau_1}(G) \tensor[_{\check e_{i_k}}]{\times}{_{e_{i_k'}}}
\mathcal B_{\tilde \tau_2}(G) ,
\]
where $\check e_{i_k} := \iota \circ e_{i_k}$, and under this identification
$\xi_{\tilde \tau}$ is the composition of the following morphisms:%
\[
\mathcal B_{\tilde \tau_1}(G) \tensor[_{\check e_{i_k}}]{\times}{_{e_{i_k'}}} \mathcal B_{\tilde \tau_2}(G)
\stackrel{(\xi_{\tilde \tau_1}, {\rm Id})}{\longrightarrow}
\mathcal B_{0, m}^{\rm bal}(G) \tensor[_{\check e_{i_k}}]{\times}{_{e_{i_k'}}} \mathcal B_{\tilde \tau_2}(G)
\to \mathcal B_{0, n}^{\rm bal}(G) ,
\]
where the morphism to the right is the gluing one of \cite[Proposition 5.2.1\,(1)]{AGV08}.
\end{proof}

Furthermore, we have the following result.
\begin{Proposition}\label{xitautilde_immersion}
The morphism $\xi_{\tilde \tau}$ in \eqref{xi_tildetau} is a closed immersion.
\end{Proposition}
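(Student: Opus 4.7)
Proposition~\ref{xitautilde_proper} already yields that $\xi_{\tilde\tau}$ is representable and proper; a representable proper monomorphism is a closed immersion (representably, proper\,+\,quasi-finite gives finite, and a finite monomorphism is a closed immersion). It therefore suffices to upgrade the previous proposition by proving in addition that $\xi_{\tilde\tau}$ is a monomorphism. I will proceed by induction on $k:=\#E_{\tilde\tau}$, following exactly the factorization used in the proof of Proposition~\ref{xitautilde_proper}.

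\textbf{Base cases.} For $k=0$ the morphism is the identity. For $k=1$, $\xi_{\tilde\tau}$ is the basic gluing map
\[
\mathcal{B}^{\rm bal}_{0,F_{\tilde\tau}([i_1])}(G) \tensor[_{\check e_{i_1}}]{\times}{_{e_{i_1'}}} \mathcal{B}^{\rm bal}_{0,F_{\tilde\tau}([i_1'])}(G) \longrightarrow \mathcal{B}^{\rm bal}_{0,n}(G)
\]
from \cite[Proposition~5.2.1(1)]{AGV08}. The substantive input is that the normalization of a balanced twisted curve at a prescribed separating node is a functor inverse to gluing: it produces the two components as twisted pointed curves together with canonically $\boldsymbol{\mu}_r$-banded cyclotomic gerbes at the two preimages of the node, matched under $\iota$. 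Applied to the $G$-bundle side of a twisted $G$-cover, this recovers a balanced twisted $G$-cover on each component with evaluations matched at the new marked points. Thus the fibered product on the left is naturally identified with the closed locus in $\mathcal{B}^{\rm bal}_{0,n}(G)$ of covers admitting a separating node of the prescribed combinatorial type, and $\xi_{\tilde\tau}$ is a monomorphism onto that locus. Being also proper, it is a closed immersion.

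\textbf{Inductive step.} Suppose the statement holds for gerby trees with at most $k-1$ edges, and let $\#E_{\tilde\tau}=k\ge 2$. Choose the ordering of edges as in the proof of Proposition~\ref{xitautilde_proper}, so that the vertex $[i_k']$ meets only the edge $\{i_k,i_k'\}$, and let $\tilde\tau_1,\tilde\tau_2$ denote the gerby subtrees defined there. Then $\xi_{\tilde\tau}$ factors as
\[
\mathcal{B}_{\tilde\tau_1}(G) \tensor[_{\check e_{i_k}}]{\times}{_{e_{i_k'}}} \mathcal{B}_{\tilde\tau_2}(G)
\xrightarrow{(\xi_{\tilde\tau_1},\,\mathrm{Id})}
\mathcal{B}^{\rm bal}_{0,m}(G) \tensor[_{\check e_{i_k}}]{\times}{_{e_{i_k'}}} \mathcal{B}_{\tilde\tau_2}(G)
\longrightarrow \mathcal{B}^{\rm bal}_{0,n}(G),
\]
with $m=\#L_{\tilde\tau_1}$ and with the right-hand arrow the basic gluing map. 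The first arrow is a base change of $\xi_{\tilde\tau_1}$, a closed immersion by the inductive hypothesis, and hence itself a closed immersion. The second arrow is a closed immersion by the base case. The composition is therefore a closed immersion, as desired.

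\textbf{Expected obstacle.} The only genuinely non-formal step is the case $k=1$: identifying the source of the basic gluing map with a closed substack of $\mathcal{B}^{\rm bal}_{0,n}(G)$ requires the functoriality and invertibility of normalization at a single balanced node for twisted $G$-covers, together with the canonical $\boldsymbol{\mu}_r$-banding at the preimages and its compatibility with $\iota$. This is a classical input from \cite{AGV08} (Appendix~A) and \cite{ACV03}; modulo it, the induction is entirely formal.
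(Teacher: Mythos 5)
Your formal skeleton is sound and genuinely different from the paper's argument: you reduce to showing that $\xi_{\tilde \tau}$ is a monomorphism (given the representability and properness from Proposition~\ref{xitautilde_proper}, a representable proper monomorphism is indeed a closed immersion), and your induction on the number of edges is correct as far as it goes --- the first arrow in your factorization is a base change of $\xi_{\tilde \tau_1}$ because evaluation at the flag $i_k$ commutes with gluing, and closed immersions are stable under base change and composition. The paper proceeds differently: it argues as in \cite[Chapter~XII, Proposition~10.11 and Corollary~10.22]{ACG11}, using Kuranishi families together with the triviality of the automorphism group of a stable $n$-tree (Lemma~\ref{auttau_trivial}), rather than through the monomorphism criterion.

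The genuine gap is your base case $k=1$, which is where all the mathematical content of the proposition sits. You assert that the one-edge gluing map is an isomorphism onto ``the closed locus of covers admitting a separating node of the prescribed combinatorial type'', citing \cite[Appendix~A]{AGV08} and \cite{ACV03}; but those references supply the gluing construction and its representability (this is exactly what \cite[Proposition~5.2.1]{AGV08} is invoked for in Proposition~\ref{xitautilde_proper}), not that gluing is a monomorphism, let alone an isomorphism onto a closed substack --- and in general (graph automorphisms, several nodes of the same type, higher genus) the gluing morphisms are \emph{not} closed immersions, so the statement cannot be purely formal. What must actually be proved is: for a family over an arbitrary base $T$ all of whose geometric fibers lie in the image, the node inducing the prescribed partition of the markings, with the prescribed gerbe datum, exists as a sub-gerbe of the universal twisted curve over all of $T$, is unique, and normalization along it is functorial in $T$; only then do you get full faithfulness on $T$-points, not merely injectivity on geometric points. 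Fiberwise uniqueness of the node does follow from the markings (triviality of automorphisms of the one-edge marked tree), but the passage from fiberwise existence to a family of nodes over $T$ is precisely the local-structure/deformation-theoretic input that the paper extracts from the Kuranishi-family analysis of \cite{ACG11} (locally the boundary stratum is the vanishing locus of the smoothing parameter of the relevant node). Your ``expected obstacle'' paragraph concedes the point but attributes it to references that do not contain it in the required form; to complete the proof you would have to supply that argument at $k=1$, after which your induction does the rest.
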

To prove the previous proposition, we need a preliminary result.
\begin{Lemma}\label{auttau_trivial}
The automorphism group of a stable $n$-tree $(\tau, l_\tau)$ is trivial.
\end{Lemma}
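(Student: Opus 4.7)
The plan is to show that the underlying flag bijection $\varphi_F\colon F_\tau \to F_\tau$ of any automorphism $\tilde \varphi$ of $(\tau, l_\tau)$ is the identity. The defining condition $l_\tau = l_\tau \circ \varphi_L$ and the injectivity of $l_\tau$ force $\varphi_F(i) = i$ for every $i \in L_\tau$, so only the action on edge-flags and on vertices needs to be controlled. I would then prove, in order, that $\varphi_V = \operatorname{id}_{V_\tau}$ and that $\varphi_F$ fixes every edge-flag.

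For the first step, I would show that every vertex $v \in V_\tau$ lies on the unique geodesic in $|\tau|$ between some pair of distinct leaves $a, b \in L_\tau$. If two flags of $v$ are already leaves, those serve as $a, b$. Otherwise, stability $n(v)\geq 3$ guarantees that $v$ has at least two edge-flags leading into distinct subtrees of $|\tau|\setminus \{v\}$, each of which, being a nonempty finite subtree of $\tau$, contains at least one original leaf; picking one leaf from each of two such subtrees yields the desired $a, b$. Since $\varphi$ is a tree automorphism fixing the endpoints of this geodesic and the geodesic between $a$ and $b$ is unique (as $|\tau|$ is simply connected), $\varphi$ preserves the geodesic setwise and acts on it as an isometry fixing both endpoints, hence fixes every vertex on it, in particular $v$.

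For the second step, fix any edge-flag $i \in F_\tau\setminus L_\tau$ and let $e = \{i, j_\tau(i)\}$ be its edge. Because $\tau$ has no loops (trees are simply connected) the endpoints $[i]$ and $[j_\tau(i)]$ are distinct, and because $\tau$ has no multi-edges (same reason) $e$ is the unique edge joining them. The image edge $\{\varphi_F(i),\varphi_F(j_\tau(i))\}$ has endpoints $\varphi_V([i]) = [i]$ and $\varphi_V([j_\tau(i)]) = [j_\tau(i)]$, so it equals $e$. Intersecting with $F_\tau([i])$ and using $[i]\ne [j_\tau(i)]$ gives $\varphi_F(i) = i$.

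Combining the three cases (leaves, vertices, edge-flags) yields $\varphi_F = \operatorname{id}_{F_\tau}$, hence $\tilde \varphi = \operatorname{id}$. The only non-routine input is the existence of a leaf-to-leaf geodesic through every vertex; this is the point at which the stability hypothesis $n(v)\geq 3$ is used, and I expect it to be the main (and only mild) obstacle.
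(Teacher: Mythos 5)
Your proof is correct, but it follows a genuinely different route from the paper's. The paper argues by induction on the number of edges: it picks a vertex incident to a single edge, observes that by stability this vertex carries at least one leaf (which is fixed by the marking condition), concludes that the vertex and its unique edge-flag are fixed, prunes that vertex, and applies the inductive hypothesis to the resulting smaller stable marked tree. You instead give a direct, non-inductive argument: leaves are fixed by the marking; every vertex is fixed because it lies on the unique path between two fixed leaves and a tree automorphism fixing the endpoints of a path fixes it pointwise; every edge-flag is then fixed because a tree has no loops or multiple edges, so an edge is determined by its (now fixed) endpoints. Your approach buys a shorter, more conceptual argument that isolates where each hypothesis enters (the marking fixes leaves, stability produces a leaf-to-leaf geodesic through every vertex, simple-connectedness gives uniqueness of paths and absence of multi-edges), while the paper's pruning induction has the advantage of reusing verbatim the edge-ordering and subtree construction already set up in the proof that $\xi_{\tilde\tau}$ is proper, which is why the paper phrases it that way. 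One sentence of yours should be tightened: a branch hanging off a vertex contains a leaf of $\tau$ not merely because it is a nonempty finite subtree (a branch consisting of a chain of valence-one or valence-two vertices would have none), but because a vertex of the branch at maximal distance from $v$ has valence at least $3$ and only one flag pointing back toward $v$, so its remaining flags must be leaves; this is the same use of stability at the extremal vertex that drives the paper's induction, and with that justification inserted your argument is complete.
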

\begin{proof}
We proceed by induction on the number $\# E_\tau$ of edges. If $\# E_\tau =0$ the claim is obvious.
So, let us assume that $\#E_\tau =k+1$ and that the result holds true for trees with $k$ edges.
Let us fix a numbering of the edges, $E_\tau = \{ \{i_1,i_1' \}, \dots, \{i_{k+1},i_{k+1}' \} \}$
as in the proof of Proposition~\ref{xitautilde_proper}, that is such that
$j_{\tau}(i_\ell) = i_{\ell}'$, for $\ell = 1, \dots , k+1$, and $[i_{k+1}']$ belongs only to the edge $\{i_{k+1}, i_{k+1}'\}$.
Notice that, since $\tau$ is stable, $F_\tau([i_{k+1}']) \cap L_\tau \not= \varnothing$.
Let now $\varphi \in {\rm Aut}(\tau, l_\tau)$. By definition, $\varphi (f)=f$ for any $f\in F_\tau([i_{k+1}']) \cap L_\tau$,
hence $\varphi ([i_{k+1}']) = [i_{k+1}']$ and $\varphi (i_{k+1}')=i_{k+1}'$
(in other words $\varphi$ is the identity on $[i_{k+1}']$).
Let now $\tau_1$ be the tree defined as in the proof of Proposition~\ref{xitautilde_proper},
with $F_{\tau_1}=F_\tau \setminus F_\tau ([i_{k+1}'])$, and let $l_{\tau_1}$ be a $n_1$-marking, where
$n_1:= \#L_{\tau_1}$. Notice that $(\tau_1, l_{\tau_1})$ is a stable $n_1$-tree
and that the restriction of $\varphi$ to $F_{\tau_1}$ is an automorphism of $(\tau_1, l_{\tau_1})$.
By induction we conclude that $\varphi = {\rm Id}$.
\end{proof}

\begin{proof}[Proof of Proposition \ref{xitautilde_immersion}]
The claim follows essentially by the same proofs of Proposition~10.11 and Corollary~10.22
in Chapter~XII of~\cite{ACG11}. The main ingredients in these proofs are: the existence of Kuranishi families
(see \cite[Chapter~XVI, Section~9]{ACG11}), and the fact that the automorphism group of any stable gerby $n$-tree
$(\tilde \tau, l_\tau)$ is trivial (Lemma \ref{auttau_trivial}).
\end{proof}

The image of $\mathcal B_{\tilde \tau}^{\rm sm}(G)$ under $\xi_{\tilde \tau}$ is the locus of (balanced) twisted $G$-covers of combinatorial type $\tilde \tau$,
which we denote $\mathcal B^{\rm bal}(G, \tilde \tau)$.
Then we have the following corollary of Proposition \ref{xitautilde_immersion}.
\begin{Corollary}\label{cor_xitautilde_immersion}
The image $\xi_{\tilde \tau} ( \mathcal B_{\tilde \tau}(G))$ is closed in $\mathcal B^{\rm bal}_{0,n}(G)$
and $\mathcal B^{\rm bal}(G, \tilde \tau)$ is open in $\xi_{\tilde \tau} ( \mathcal B_{\tilde \tau}(G))$.
\end{Corollary}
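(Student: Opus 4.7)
The plan is to deduce both claims directly from Proposition~\ref{xitautilde_immersion}, together with the observation that $\mathcal{B}_{\tilde\tau}^{\rm sm}(G)$ sits inside $\mathcal{B}_{\tilde\tau}(G)$ as an open substack. The first assertion is essentially immediate: a closed immersion has closed set-theoretic image, so Proposition~\ref{xitautilde_immersion} gives at once that $\xi_{\tilde\tau}(\mathcal{B}_{\tilde\tau}(G))$ is closed in $\mathcal{B}^{\rm bal}_{0,n}(G)$.

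For the second assertion, I would first verify that the inclusion $\mathcal{B}_{\tilde\tau}^{\rm sm}(G)\hookrightarrow \mathcal{B}_{\tilde\tau}(G)$ is an open immersion. This follows from Definition~\ref{tau_tilde_marked_cover}: both stacks are constructed as fiber products of $\cyinst^{(\tilde\tau)}\to\cyinst^{E'_{\tilde\tau}}$ against $\prod_{v\in V_{\tilde\tau}}\balmvsm$ and $\prod_{v\in V_{\tilde\tau}}\balmv$ respectively, and by Notation~\ref{balsm} each $\balmvsm\subset\balmv$ is open, so the product is open and hence so is its base change.

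Then, since $\xi_{\tilde\tau}$ is a closed immersion by Proposition~\ref{xitautilde_immersion}, it restricts to a homeomorphism of underlying topological spaces $|\mathcal{B}_{\tilde\tau}(G)|\iso |\xi_{\tilde\tau}(\mathcal{B}_{\tilde\tau}(G))|$. Under such a homeomorphism, open substacks correspond to open subsets of the image, so
\[
\mathcal{B}^{\rm bal}(G,\tilde\tau)=\xi_{\tilde\tau}\bigl(\mathcal{B}_{\tilde\tau}^{\rm sm}(G)\bigr)
\]
is open in $\xi_{\tilde\tau}(\mathcal{B}_{\tilde\tau}(G))$, as required.

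There is no real obstacle here beyond making sure the openness of $\mathcal{B}_{\tilde\tau}^{\rm sm}(G)\subset\mathcal{B}_{\tilde\tau}(G)$ is invoked correctly; the bulk of the work has already been done in Propositions~\ref{xitautilde_proper} and~\ref{xitautilde_immersion}, so the corollary amounts to packaging these facts.
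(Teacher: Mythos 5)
Your proof is correct and follows essentially the same route as the paper: both arguments rest on the openness of $\mathcal{B}^{\rm sm}_{\tilde\tau}(G)$ in $\mathcal{B}_{\tilde\tau}(G)$ together with the properties of $\xi_{\tilde\tau}$ established in Propositions~\ref{xitautilde_proper} and~\ref{xitautilde_immersion}. The only cosmetic difference is that you obtain openness of $\mathcal{B}^{\rm bal}(G,\tilde\tau)$ via the homeomorphism of $\xi_{\tilde\tau}$ onto its image, whereas the paper phrases it as the complement, inside $\xi_{\tilde\tau}(\mathcal{B}_{\tilde\tau}(G))$, of the closed image of $\mathcal{B}_{\tilde\tau}(G)\setminus\mathcal{B}^{\rm sm}_{\tilde\tau}(G)$ (closedness coming from properness); the two are interchangeable.
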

\begin{proof}
Notice that $\mathcal B^{\rm sm}_{\tilde \tau}(G)$ is open in $\mathcal B_{\tilde \tau}(G)$
by definition,
and $\mathcal B^{\rm bal}(G, \tilde \tau)$ is open in the image of $\xi_{\tilde \tau}$ because it is
the complement of a closed locus, by Proposition \ref{xitautilde_proper}.
\end{proof}

\section{Recursive relations}\label{section recursive relations}
In this section, we prove our main result, Theorem \ref{mainthm}, which yields recursive formulae that express each class
$\{\bal\} \in \Kstk$ in terms of the classes of the open loci in $\mathcal{B}^\mathrm{bal}_{0,m}(G)$, for $m\leq n$,
corresponding to stable maps with smooth domain. To this aim we need some preliminary definitions and results.
First of all, following \cite[Definition 4.2]{BagnarolMN}, let us give the following definition.
\begin{Definition}
Let $\mc{X}$ be an algebraic $\K$-stack. A locally closed decomposition of $\mc{X}$ is a~morphism
$f \colon \mc{Y} \to \mc{X}$ of algebraic $\K$-stacks that satisfies the following conditions:
\begin{itemize}\itemsep=0pt
\item the restriction of $f$ to each connected component of $\mc{Y}$ is a locally closed immersion;
\item $f$ is bijective on closed points.
\end{itemize}
\end{Definition}

Let us recall that a locally closed immersion is a morphism that can be factored as $j\circ i$, where $i$ is a closed immersion and $j$ is an open immersion.

In the following, we will use the next result, which is well known, but we provide an elementary proof for completeness.
\begin{Lemma}
Let $f \colon \mc{Y} \to \mc{X}$ be a locally closed decomposition. Let $\mc{Y}_1, \dots , \mc{Y}_m$
be the connected components of $\mc{Y}$. Then the following equality holds true in~$\Kstk$:
\[
\{\mc{X}\} = \sum_{k=1}^m \{f( \mc{Y}_k)\} .
\]
\end{Lemma}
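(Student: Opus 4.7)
The plan is to reduce the statement to an iterated application of the scissor relation $\{\mc{X}\} = \{\mc{Z}\} + \{\mc{X}\setminus \mc{Z}\}$. Since each restriction $f|_{\mc{Y}_k}$ is a locally closed immersion, every image $f(\mc{Y}_k)$ carries a canonical structure of locally closed substack of $\mc{X}$, the induced morphism $\mc{Y}_k \to f(\mc{Y}_k)$ is an isomorphism, and the hypothesis that $f$ is bijective on closed points says precisely that $\mc{X}$ is the set-theoretic disjoint union of the substacks $f(\mc{Y}_k)$. It thus suffices to peel off these strata one at a time by scissor moves, which requires ordering them so that the partial unions remain closed.

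To this end I would introduce the preorder on $\{1,\dots,m\}$ given by $k \preceq \ell$ iff $f(\mc{Y}_k) \subseteq \overline{f(\mc{Y}_\ell)}$ inside $\mc{X}$, and relabel the components by a linear extension, so that the strata whose closure contains nothing else (i.e., the already closed ones) come first. Equivalently, because there are finitely many strata, a descending induction on closed substacks of $\mc{X}$ produces an ordering such that the sets $\mc{Z}_0 := \varnothing$ and $\mc{Z}_j := \bigcup_{k\leq j} f(\mc{Y}_k)$ are closed in $\mc{X}$ for every $j$, with $f(\mc{Y}_j)$ open in $\mc{Z}_j$ and with complement exactly $\mc{Z}_{j-1}$ (all taken with reduced induced structure, which is harmless for classes in $\Kstk$).

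Applying the scissor relation to the closed immersion $\mc{Z}_{j-1}\hookrightarrow \mc{Z}_j$ gives
\[
\{\mc{Z}_j\} \;=\; \{\mc{Z}_{j-1}\} + \{f(\mc{Y}_j)\},
\]
so an immediate induction on $j$ yields $\{\mc{Z}_m\} = \sum_{k=1}^m \{f(\mc{Y}_k)\}$. Since $f$ is bijective on closed points, the closed substack $\mc{Z}_m$ contains every closed point of $\mc{X}$, forcing $\mc{Z}_m = \mc{X}$ and giving the desired identity.

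The main obstacle is purely topological: verifying that a linear extension of $\preceq$ really produces closed partial unions $\mc{Z}_j$, and that $\mc{Z}_j\setminus\mc{Z}_{j-1}$ coincides with $f(\mc{Y}_j)$ not only on closed points but as locally closed substacks. This is a Noetherian induction using that $\Algst$ consists of stacks of finite type over $\K$, hence the lattice of closed substacks of $\mc{X}$ satisfies d.c.c.; at the level of classes in the Grothendieck group, where only reduced structures matter, this suffices.
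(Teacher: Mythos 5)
Your reduction to iterated scissor relations is the right spirit, but the step you treat as a routine verification is in fact false: a locally closed decomposition need not admit any ordering of its pieces for which the partial unions $\mc{Z}_j$ are closed. Such an ordering forces the first piece to be closed in $\mc{X}$ and the last to be open in $\mc{X}$, and neither need exist, because the strata coming from the connected components of $\mc{Y}$ need not satisfy the frontier condition. Concretely, let $\mc{X}=L_1\cup L_2\cup L_3$ be a triangle of lines (in $\mathbb{P}^2$ or $\mathbb{A}^2$) with vertices $a=L_1\cap L_2$, $b=L_2\cap L_3$, $c=L_3\cap L_1$, and let $\mc{Y}=S_1\sqcup S_2\sqcup S_3$ with $S_1=L_1\setminus\{a\}$, $S_2=L_2\setminus\{b\}$, $S_3=L_3\setminus\{c\}$, mapping to $\mc{X}$ by the inclusions. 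This is a locally closed decomposition whose components $S_k$ are connected, yet no $S_k$ is closed (each closure picks up a vertex lying in a different stratum) and no $S_k$ is open in $\mc{X}$; moreover your preorder $\preceq$ is trivial here (no $S_k$ is contained in the closure of another), so every ordering is a ``linear extension'' and none produces a closed $\mc{Z}_1$, let alone closed $\mc{Z}_j$ for all $j$. Noetherian induction/d.c.c.\ cannot repair this: what fails is the frontier condition, a combinatorial property of the given strata, not a finiteness property of $\mc{X}$. (The conclusion of the lemma still holds in this example, of course; it is only your filtration that does not exist.)

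The paper's proof avoids any ordering and instead inducts on the number $m$ of components: cut $\mc{X}$ along the closed substack $\overline{f(\mc{Y}_m)}$ to get $\{\mc{X}\}=\{f(\mc{Y}_m)\}+\{\overline{f(\mc{Y}_m)}\setminus f(\mc{Y}_m)\}+\{\mc{X}\setminus\overline{f(\mc{Y}_m)}\}$, note that intersecting the remaining strata with $\overline{f(\mc{Y}_m)}$ and with its complement yields locally closed decompositions of those two pieces with $m-1$ strata each, apply the induction hypothesis to both, and recombine using the scissor relation $\{f(\mc{Y}_k)\}=\{\overline{f(\mc{Y}_m)}\cap f(\mc{Y}_k)\}+\{(\mc{X}\setminus\overline{f(\mc{Y}_m)})\cap f(\mc{Y}_k)\}$ for $k<m$. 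If you wish to salvage your approach, you would have to first refine the given strata to a decomposition that does admit a closed filtration and then reassemble the classes of the original strata from the refined ones; but that reassembly is precisely the lemma again, so the induction on $m$ as in the paper (or some equivalent d\'evissage) is the essential missing ingredient rather than an optional technicality.
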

\begin{proof}
By definition, $f( \mc{Y}_1), \dots , f( \mc{Y}_m)$ are locally closed substacks of $\mc{X}$ such that
\[
\mc{X} = \bigsqcup_{k=1}^m f( \mc{Y}_k) .
\]
We proceed by induction on $m$. Let us assume $m>1$, since the case $m=1$ is trivial.
Now, for $\overline{f( \mc{Y}_m)}$ being the closure of $f( \mc{Y}_m)$, using the properties of $\Kstk$ and the fact that
$f( \mc{Y}_m)$ is open in $\overline{f( \mc{Y}_m)}$, we have the following equalities:
\begin{align*}
\{ \mc{X}\} &= \{ \overline{f( \mc{Y}_m)} \} + \{ \mc{X} \setminus \overline{f( \mc{Y}_m)}\} \\
&= \{ {f( \mc{Y}_m)} \} + \{ \overline{f( \mc{Y}_m)} \setminus f( \mc{Y}_m) \} + \{ \mc{X} \setminus \overline{f( \mc{Y}_m)}\} .
\end{align*}
Now we observe that
\[
\overline{f( \mc{Y}_m)} \setminus f( \mc{Y}_m) = \bigsqcup_{k=1}^{m-1} \bigl( \overline{f( \mc{Y}_m)} \setminus f( \mc{Y}_m) \bigr) \cap f( \mc{Y}_k)
= \bigsqcup_{k=1}^{m-1}\overline{f( \mc{Y}_m)} \cap f( \mc{Y}_k)
\]
and
\[
\mc{X} \setminus \overline{f( \mc{Y}_m)} = \bigsqcup_{k=1}^{m-1} \bigl( \mc{X} \setminus \overline{f( \mc{Y}_m)} \bigr) \cap f( \mc{Y}_k) ,
\]
which are locally closed decompositions of the corresponding right-hand sides. By induction, we deduce that
\begin{align*}
\{ \mc{X}\} &= \{ {f( \mc{Y}_m)} \} + \sum_{k=1}^{m-1} \bigl\{ \overline{f( \mc{Y}_m)} \cap f( \mc{Y}_k) \bigr\} +
\sum_{k=1}^{m-1} \bigl\{ \bigl( \mc{X} \setminus \overline{f( \mc{Y}_m)} \bigr) \cap f( \mc{Y}_k) \bigr\} \\
&= \{ {f( \mc{Y}_m)} \} + \sum_{k=1}^{m-1} \{ f( \mc{Y}_k) \} .\tag*{\qed}
\end{align*}\renewcommand{\qed}{}
\end{proof}

\begin{Proposition}\label{mainprop}
For any integer $n\geq 3$, let $\tilde{\Gamma}_{0,n}$ be a set of representatives of the isomorphism classes
of stable gerby $n$-trees associated to $\mathcal B G$.
Let $(\overline{\mc{B}}, \overline{e})$ and
$({\mc{B}}, {e})$ be the $\ms S$-modules of Example~{\rm \ref{exSmod}}.
Then the following equalities hold true in $\Ksnmod$:
\begin{gather}
 \sum_{\tilde{\tau} \in \tilde{\Gamma}_{0,n}} \bigl\{ V_{\tilde{\tau}} \times \mc{B}^{\rm sm}_{\tilde{\tau}} (G)\bigr\} =
\bigl\{\mc{B} \circ \bigl( I_1 \amalg D\overline{\mc{B}} \bigr)\bigr\}_n , \label{vertice marcato} \\
 \sum_{\tilde{\tau} \in \tilde{\Gamma}_{0,n}} \bigl\{ E_{\tilde{\tau}} \times \mc{B}^{\rm sm}_{\tilde{\tau}} (G)\bigr\} =
\bigl\{ I_2 \circ D \overline{\mc{B}} \bigr\}_n , \label{spigolo marcato} \\
 \sum_{\tilde{\tau} \in \tilde{\Gamma}_{0,n}} \bigl\{ (F_{\tilde{\tau}} \setminus L_{\tilde{\tau}}) \times
\mc{B}^{\rm sm}_{\tilde{\tau}} (G)\bigr\} = \bigl\{(I_2)_2 \times_{B^2} (D\overline{\mc{B}} \ast D\overline{\mc{B}} )_n \bigr\} .\label{bandiera marcata}
\end{gather}
\end{Proposition}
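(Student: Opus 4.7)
The strategy is to upgrade each equality to an $\boldsymbol{S}_n$-equivariant isomorphism of algebraic $\K$-stacks, from which the identity in $\Ksnmod$ follows by additivity. The starting point is the stratification
\[
\bal = \bigsqcup_{\tilde{\tau} \in \tilde{\Gamma}_{0,n}} \mc{B}^{\rm bal}(G, \tilde{\tau}),
\]
together with the identification $\mc{B}^{\rm sm}_{\tilde{\tau}}(G) \cong \mc{B}^{\rm bal}(G, \tilde{\tau})$, which follows from the fact that $\xi_{\tilde\tau}$ is a closed immersion (Proposition~\ref{xitautilde_immersion}) and thus restricts to an isomorphism between the open substack $\mc{B}^{\rm sm}_{\tilde\tau}(G) \subset \mc{B}_{\tilde\tau}(G)$ and its image $\mc{B}^{\rm bal}(G, \tilde\tau)$. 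Consequently, the left-hand sides of \eqref{vertice marcato}, \eqref{spigolo marcato} and \eqref{bandiera marcata} parametrize, respectively, pairs consisting of a cover in $\bal$ and a vertex, an edge, or a non-leaf flag of its combinatorial type.

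For \eqref{vertice marcato}, using the explicit description \eqref{bcircdb}, the isomorphism sends a pair $(v, \text{ smooth } \tilde\tau\text{-marked cover})$ to the following datum: the component at $v$, viewed as an object of $\mc{B}_{n(v)}$; for each leaf flag $i \in F_{\tilde\tau}(v) \cap L_{\tilde\tau}$, the element of $(I_1)_1 = B$ determined by $\mf{m}(i)$; and for each non-leaf flag $i \in F_{\tilde\tau}(v) \cap E'_{\tilde\tau}$, the cover obtained by gluing the subtree of $\tilde\tau$ rooted at $i$, regarded as an object of $\overline{\mc{B}}_{k_i+1} = (D\overline{\mc{B}})_{k_i}$ with $i$ playing the role of the distinguished last marked point. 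Forgetting the labeling of the flags at $v$ yields the $\boldsymbol{S}_m$-quotient in \eqref{bcircdb}, while the induced labeling of the leaves of $\tilde\tau$ is recorded by the shuffles in \eqref{Kel05}. The inverse declares the $\mc{B}_m$-factor of a point of the right-hand side to be a distinguished vertex and recursively reconstructs the combinatorial types of the attached covers from their stratifications inside $\overline{\mc{B}}$.

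Equations \eqref{spigolo marcato} and \eqref{bandiera marcata} are handled by the same strategy with an edge or a non-leaf flag in place of a vertex. Cutting $\tilde\tau$ along an edge $\{i,i'\}$ produces two rooted subtrees that glue to a pair in $(D\overline{\mc{B}})_{k_1} \times (D\overline{\mc{B}})_{k_2}$, $k_1+k_2 = n$, whose distinguished evaluations lie in $(I_2)_2 \subset B^2$ via the involution $\iota$ (by the gerby-tree condition $\mf{m}(i) = \iota \circ \mf{m}(i')$); the $\boldsymbol{S}_2$-quotient appearing in $I_2 \circ D\overline{\mc{B}}$ reflects the unordered nature of $\{i,i'\}$. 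Summing instead over $F_{\tilde\tau} \setminus L_{\tilde\tau}$ is the oriented version: it amounts to summing over oriented edges, which suppresses the $\boldsymbol{S}_2$-quotient and produces exactly \eqref{i2xdb2}. The main obstacle I anticipate is the bookkeeping of the shuffles and of the gerbe-gluing data $(\Phi_i, \rho_i)$: one must verify that the proposed inverses are well defined at the $2$-categorical level, and that the resulting maps intertwine the $\boldsymbol{S}_n$-actions permuting the original $n$ marked points. This last verification reduces to the compatibility of Day convolution and composition with the structure transformations $\varepsilon$, $\tilde\varepsilon$ established in Propositions~\ref{conv/B} and~\ref{crsmb}.
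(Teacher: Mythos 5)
The central claim of your strategy --- that each equality can be upgraded to an $\boldsymbol{S}_n$-equivariant \emph{isomorphism} of algebraic stacks --- is false, and this is a genuine gap rather than a presentational issue. The right-hand sides involve the compactified modules $\overline{\mc{B}}$, so for instance the degree-$n$ part of $\mc{B} \circ \bigl( I_1 \amalg D\overline{\mc{B}} \bigr)$ (see \eqref{bcircdb}) contains connected families in which the attached covers in $\overline{\mc{B}}_{k_i+1}$ degenerate; the images of the various strata $V_{\tilde{\tau}} \times \mc{B}^{\rm sm}_{\tilde{\tau}}(G)$ are therefore only locally closed substacks of the target, one lying in the closure of another, whereas the left-hand side is their \emph{disjoint} union. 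Already for $G$ trivial and $n=5$ the piece of the right-hand side with $m=3$, $\underline{k}=(3,1,1)$ contains a factor $\overline{\mc{M}}_{0,4}\cong\mathbb{P}^1$, while the corresponding part of the left-hand side is the disjoint union of $\mc{M}_{0,4}$ with finitely many points: these have the same class in the Grothendieck group but are not isomorphic. For the same reason your proposed ``inverse'', which reads off the combinatorial type of the attached covers from the stratification of $\overline{\mc{B}}$, exists only on closed points; the combinatorial type is not locally constant on $\overline{\mc{B}}$, so this assignment is not a morphism of stacks, and no isomorphism intertwining the two sides can exist.

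What the statement requires, and what the paper actually proves, is weaker and is exactly calibrated to the Grothendieck group: the gluing construction you describe (component at the chosen vertex $w$ into $\mc{B}_m$, attached subtrees $\tilde{\tau}^i_w$ glued via $\xi_{\tilde{\tau}^i_w}$ into $(D\overline{\mc{B}})_{k_i}$, shuffles recording the induced labelling, and the oriented-edge/edge variants with the involution $\iota$ and the $\boldsymbol{S}_2$-quotient) defines a morphism from $\coprod_{\tilde{\tau}} V_{\tilde{\tau}} \times \mc{B}^{\rm sm}_{\tilde{\tau}}(G)$ (respectively the edge and flag versions) to the right-hand side, and one shows this morphism is a \emph{locally closed decomposition}: bijective on closed points, and a locally closed immersion on each connected component, the latter following from Proposition \ref{xitautilde_immersion} applied to the subtrees $\tilde{\tau}^i_w$ together with the fact that the quotient map by the finite group $\boldsymbol{S}_m$ is open and closed. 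The equalities in $\Ksnmod$ then follow from the Lemma on locally closed decompositions proved just before Proposition \ref{mainprop}, not from additivity over an isomorphism. Your geometric description of the map itself coincides with the paper's; the missing ingredient is the replacement of the (nonexistent) isomorphism by the locally-closed-decomposition argument, including the verification that each stratum maps by a locally closed immersion and that the map is bijective on closed points.
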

\begin{proof}
To prove \eqref{vertice marcato}, we show that there is a locally closed decomposition
\begin{equation}\label{vertice marcato'}
\coprod_{\tilde{\tau} \in \tilde{\Gamma}_{0,n}}V_{\tilde{\tau}} \times \mc{B}^{\rm sm}_{\tilde{\tau}} (G)
\to
\coprod_{m} \Biggl( \mc{B}_m \times_{B^m} \coprod_{\substack{\underline{k} \in \N^m \\ |\underline{k}|=n}}
{\rm Sh}(\underline{k}) \times \bigl(I_1 \amalg D\overline{\mc{B}}\bigr)_{\underline{k}} \Biggr) \Big/\boldsymbol{S}_m ,
\end{equation}
where
\[
\bigl(I_1 \amalg D\overline{\mc{B}}\bigr)_{\underline{k}}= \bigl(I_1 \amalg D\overline{\mc{B}}\bigr)_{k_1}\times \dots \times
\bigl(I_1 \amalg D\overline{\mc{B}}\bigr)_{k_m} ,
\]
so that the claim then follows since (by \eqref{kellycomp} in Definition \ref{composition}) the class in $\Kstk$ of the target of \eqref{vertice marcato'}
is the right-hand side of \eqref{vertice marcato}.
Notice that for any $n$ the set $\tilde{\Gamma}_{0,n}$ is finite and there are only non-zero contributions for $3\leq m \leq n$ in \eqref{vertice marcato'}.

Let $\tilde{\tau} \in \tilde{\Gamma}_{0,n}$, let $w\in V_{\tilde{\tau}}$ and let $m = |F_{\tilde{\tau}}(w)|$.
We first construct a morphism
\begin{equation}\label{vertice marcato''}
\Phi_w \colon \ \boldsymbol{S}_m \times \{w\} \times \mc{B}^{\rm sm}_{\tilde{\tau}} (G)
\to
 \mc{B}_m \times_{B^m} \coprod_{\substack{\underline{k} \in \N^m \\ |\underline{k}|=n}}
{\rm Sh}(\underline{k}) \times \bigl(I_1 \amalg D\overline{\mc{B}}\bigr)_{\underline{k}} ,
\end{equation}
where, with a slight abuse of notation, the $\boldsymbol{S}_m$ in the left-hand side denotes the set of bijections
$\{ 1, \dots , m \} \to F_{\tilde{\tau}}(w)$. To this aim, let $\theta \in \boldsymbol{S}_m$ be such a bijection
and let $f_i := \theta (i)$, for $i=1, \dots, m$.
Composing the natural morphism $\mc{B}^{\rm sm}_{\tilde\tau}(G) \to \prod_{v\in V_{\tilde \tau}} \balmvsm$
(see Definition~\ref{tau_tilde_marked_cover}) with the projection to the factor indexed by~$w$,
we obtain a morphism
$\mc{B}^{\rm sm}_{\tilde\tau}(G) \to \mc{B}_{0,F_{\tilde\tau}(w)}^{\rm sm}(G)$, which gives, via $\theta$,
the first component $\mc{B}^{\rm sm}_{\tilde\tau}(G) \to \mc{B}_m$ of \eqref{vertice marcato''}.

The second component
of \eqref{vertice marcato''} is defined as follows.
Let $\tilde{\tau}_w$ be the graph $(F_{\tilde{\tau}} \setminus F_{\tilde{\tau}}(w), j_{\tilde{\tau}_w}, R_{\tilde{\tau}_w})$, where
\[
j_{\tilde{\tau}_w} (f):=
\begin{cases}
j_{\tilde{\tau}} (f) & {\rm if} \ j_{\tilde{\tau}} (f) \not\in F_{\tilde{\tau}}(w) , \\
f & {\rm otherwise} ,
\end{cases}
\]
and $R_{\tilde{\tau}_w}$ is the restriction of $R_{\tilde{\tau}}$ to $F_{\tilde{\tau}} \setminus F_{\tilde{\tau}}(w)$.
Notice that the geometric realization $|\tilde{\tau}_w|$ of $\tilde{\tau}_w$
is the complement in $|\tilde{\tau}|$ of
\[
\bigcup_{f\in L_{\tilde{\tau}} \cap F_{\tilde{\tau}}(w)} [0_f,{1/2}_f) \bigcup
\bigcup_{f\in F_{\tilde{\tau}}(w) \setminus L_{\tilde{\tau}}}[0_f, {1/2}_f] ,
\]
in particular, the connected components of $|\tilde{\tau}_w|$ are in bijection with the flags
in $F_{\tilde{\tau}(w)} \setminus L_{\tilde{\tau}}$.
For any $i=1, \dots, m$ such that $f_i \in F_{\tilde{\tau}(w)} \setminus L_{\tilde{\tau}}$, let $\tilde{\tau}_w^i$ the sub-graph of $\tilde{\tau}_w$
that contains $j_{\tilde{\tau}}(f_i)$ and whose geometric realization is a connected component of $|\tilde{\tau}_w|$.
For any $i=1,\dots, m$,
let
\[
k_i:=
\begin{cases}
1 & {\rm if} \ f_i \in L_{\tilde{\tau}} , \\
|L_{\tilde{\tau}^i_w}| -1 & {\rm otherwise} ,
\end{cases}
\]
so that $k_1 +\dots +k_m=n$.
The marking $l_{\tilde{\tau}} \colon L_{\tilde{\tau}} \leftrightarrow \{ 1, \dots , n \}$
(recall that $\tilde{\tau}$ is an $n$-tree) induces a unique
marking of the leaves of $\tilde{\tau}_w^i$ such that
$j_{\tilde{\tau}}(f_i)$ is marked with $k_i+1$ and the other leaves are marked from $1$ to $k_i$
preserving the order determined by $l_{\tilde{\tau}}$.
This data determines a $(k_1, \dots, k_m)$-shuffle $\sigma$ in the following way:
setting $k_0=0$, for any $i=1, \dots , m$,
if $k_i>1$, $\sigma (k_0+ \dots +k_{i-1}+1)< \sigma (k_0+ \dots +k_{i-1}+2) < \dots < \sigma (k_0+ \dots +k_{i-1}+k_i)$
are the indices of the first $k_i$ leaves of $\tilde{\tau}^i_w$ (i.e., those different from $j_{\tilde{\tau}}(f_i)$)
with respect to the marking $l_{\tilde{\tau}}$; if $k_i=1$, $\sigma (k_0+ \dots +k_{i-1}+1)= l_{\tilde{\tau}}(f_i)$.

Finally, for any $i=1, \dots , m$ such that $k_i>1$, the subgraph $\tilde{\tau}_w^i$ is stable,
hence we have a morphism
$\mc{B}^{\rm sm}_{\tilde\tau}(G) \to \prod_{v \in V_{\tilde{\tau}_w^i}} \mc{B}_{0,F_{\tilde\tau}(v)}^{\rm sm}(G)$,
which is the composition of the
 natural morphism
$\mc{B}_{\tilde\tau}^{\rm sm}(G) \to \prod_{v \in V_{\tilde\tau}} \mc{B}_{0,F_{\tilde\tau}(v)}^{\rm sm}(G)$
(see Definition~\ref{tau_tilde_marked_cover})
 with the projection to the product of factors corresponding to the vertices
$v \in V_{\tilde{\tau}_w^i}$.
Then the gluing construction of \cite[Section~5 and Appendix~A]{AGV08}, together with the enumeration of the leaves of
$\tilde{\tau}_w^i$ as described before, yields a~morphisms
$\mc{B}^{\rm sm}_{\tilde\tau}(G) \to (D\overline{\mc{B}})_{k_i}$, where the indices $i$ are such that $k_i>1$.
In the remaining cases where $k_i=1$
the composition of
the first component $\mc{B}^{\rm sm}_{\tilde\tau}(G) \to \mc{B}_m$ in \eqref{vertice marcato''} with the evaluation
${\rm ev}_i \colon \mc{B}_m \to B$ yields a map $\mc{B}^{\rm sm}_{\tilde\tau}(G) \to (I_1)_1$.
In this way we define the third component of $\Phi_w$ in~\eqref{vertice marcato''}.

By construction $\Phi_w$ is $\boldsymbol{S}_m$-equivariant,
hence it induces a morphism
\[
\{ w\} \times \mc{B}^{\rm sm}_{\tilde{\tau}} (G)
\to
\Biggl( \mc{B}_m \times_{B^m} \coprod_{\substack{\underline{k} \in \N^m \\ |\underline{k}|=n}}
{\rm Sh}(\underline{k}) \times \Bigl(I_1 \coprod D\overline{\mc{B}}\Bigr)_{\underline{k}} \Biggr) \Big/\boldsymbol{S}_m ,
\]
which canonically induces the morphism \eqref{vertice marcato'}.

In order to prove that \eqref{vertice marcato'} is a locally closed decomposition, it suffices to show that
it has the following two properties:
it is bijective on closed points; \eqref{vertice marcato''} is a locally closed immersion
(because the quotient morphism by a finite group is open and closed).
 We first prove the second property. To this aim let $\theta \in \boldsymbol{S}_m$ and let
\[
\Phi_{w, \theta} \colon \ \{ \theta \} \times \{ w \} \times \mc{B}^{\rm sm}_{\tilde{\tau}} (G) \to
\mc{B}_m \times_{B^m} \Big(I_1 \coprod D\overline{\mc{B}}\Big)_{\underline{k}}
\]
be the restriction of $\Phi_w$, where we have adopted the same notation as before, so that
$\underline{k}$ refers to the shuffle determined by $\theta$.
In the next discussion $w$ and $\theta$ are fixed, so we omit them.
Furthermore, we view $\mc{B}^{\rm sm}_{\tilde{\tau}} (G) \subset
 \mc{B}_m \times_{B^m} \prod_{i : k_i>1} \mc{B}_{\tilde{\tau}^i_w}^{\rm sm}(G)$ as an open substack
(in Definition~\ref{tau_tilde_marked_cover}, we permute the factors to bring $\balmwsm$ in the first position to the left,
and we embed $\balmwsm \subset \mc{B}_m$).
Then the morphism $\Phi_{w, \theta}$ is the composition of this open inclusion
followed~by
\begin{gather*}
\mr{pr}_{\mc{B}_m} \times \prod_{i\colon k_i = 1} \mr{ev}_i \circ \mr{pr}_{\mc{B}_m} \times
\prod_{i\colon k_i > 1} \xi_{\tilde{\tau}^i_w} \circ \mr{pr}_{\mc{B}_{\tilde{\tau}^i_w}(G)} \colon\\
\qquad{}
\mc{B}_m \times_{B^m} \prod_{i\colon k_i>1} \mc{B}_{\tilde{\tau}^i_w}(G) \to
\mc{B}_m \times_{B^m} \prod_{i\colon k_i=1}B \times \prod_{i\colon k_i>1} D\overline{\mc{B}}_{k_i} ,
\end{gather*}
where $\mr{pr}$ denotes the projection to the corresponding factor.
The claim now follows from Proposition~\ref{xitautilde_immersion}.

We now prove that \eqref{vertice marcato'} is bijective on closed points.
Notice that, since \eqref{vertice marcato''} is a locally closed immersion, \eqref{vertice marcato'} is injective on closed points.
To see that \eqref{vertice marcato'} is surjective on closed points, notice that any object in the target can be obtained
as the glueing of twisted $G$-covers over smooth basis.

The proof of \eqref{spigolo marcato} and \eqref{bandiera marcata} are similar to that of \eqref{vertice marcato}, so we sketch
them.
The right-hand side of \eqref{spigolo marcato} is the class of
\[
\Biggl( (I_2)_2 \times_{B^2} \coprod_{\substack{\underline{k} \in \N^2 \\ |\underline{k}|=n}}
{\rm Sh}(\underline{k}) \times D\overline{\mc{B}}_{\underline{k}} \Biggr) \Big/\boldsymbol{S}_2 ,
\]
while the right-hand side of \eqref{bandiera marcata} is the class of
\[
 (I_2)_2 \times_{B^2} \coprod_{\substack{\underline{k} \in \N^2 \\ |\underline{k}|=n}}
{\rm Sh}(\underline{k}) \times D\overline{\mc{B}}_{\underline{k}} ,
\]
where we do not pass to the quotient by $\boldsymbol{S}_2$.

The choice of a flag $f\in F_{\tilde{\tau}} \setminus L_{\tilde{\tau}}$ corresponds
to the oriented edge $\overrightarrow{\mathfrak{e}} =(f, j_{\tilde{\tau}}(f))$
of $\tilde{\tau}$. For any such oriented edge $\overrightarrow{\mathfrak{e}}$,
if we cut $|\tilde{\tau}|$ at the point corresponding to ${1/2}_f$, we obtain two
subtrees~$\tilde{\tau}^1$ and~$\tilde{\tau}^2$ of $\tilde{\tau}$, where $f \in F_{\tilde{\tau}^1}$
and $j_{\tilde{\tau}}(f) \in F_{\tilde{\tau}^2}$. Let $k_1 +1$ (resp.\ $k_2 +1$)
be the number of leaves of~$\tilde{\tau}^1$ (resp.\ $\tilde{\tau}^2$), then $k_1 + k_2 = n$.
As before this decomposition of $\tilde{\tau}$ determines a shuffle in ${\rm Sh}(\underline{k})$
and gluing the various irreducible components of the objects in $\mc{B}^{\rm sm}_{\tilde{\tau}} (G)$
according to $\tilde{\tau}^1$ and $\tilde{\tau}^2$ yields a~morphism
\begin{equation}\label{bandiera marcata'}
\{ \overrightarrow{\mathfrak{e}} \} \times \mc{B}^{\rm sm}_{\tilde{\tau}} (G) \to
(I_2)_2 \times_{B^2} \coprod_{\substack{\underline{k} \in \N^2 \\ |\underline{k}|=n}}
{\rm Sh}(\underline{k}) \times D\overline{\mc{B}}_{\underline{k}} .
\end{equation}
In this way, we obtain a locally closed decomposition
\[
\coprod_{\tilde{\tau} \in \tilde{\Gamma}_{0,n}}(F_{\tilde{\tau}}\setminus L_{\tilde{\tau}}) \times \mc{B}^{\rm sm}_{\tilde{\tau}} (G)
\to
 (I_2)_2 \times_{B^2} \coprod_{\substack{\underline{k} \in \N^2 \\ |\underline{k}|=n}}
{\rm Sh}(\underline{k}) \times D\overline{\mc{B}}_{\underline{k}}
\]
that proves \eqref{bandiera marcata}.
At the same time, exchanging the orientation of $\overrightarrow{\mathfrak{e}}$ corresponds to the
natural $\boldsymbol{S}_2$-action on the target of \eqref{bandiera marcata'}, so we also have a locally closed decomposition
\[
\coprod_{\tilde{\tau} \in \tilde{\Gamma}_{0,n}}E_{\tilde{\tau}} \times \mc{B}^{\rm sm}_{\tilde{\tau}} (G)
\to
\Biggl( (I_2)_2 \times_{B^2} \coprod_{\substack{\underline{k} \in \N^2 \\ |\underline{k}|=n}}
{\rm Sh}(\underline{k}) \times D\overline{\mc{B}}_{\underline{k}} \Biggr) \Big/\boldsymbol{S}_2
\]
that proves \eqref{spigolo marcato}.
\end{proof}

\begin{Theorem}\label{mainthm}
Let $\bigl(\overline{\mc{B}}, \overline{e}\bigr)$ be the ${\ms S}$-module defined in Example~{\rm \ref{exSmod}}
and let $({\mc{B}}, e)$ be the ${\ms S}$-sub-module given by the open locus corresponding to the stable maps
with smooth domain $($see Example~{\rm \ref{exSmod})}. Then the following relation holds true:
\begin{align}\label{maineq}
& \bigl\{\overline{\mc{B}}\bigr\} \,\equiv_B \, \{\mc{B}\} \circ \bigl( \{I_1\} + D\bigl\{\overline{\mc{B}}\bigr\} \bigr) + \{I_2\} \circ D\bigl\{\overline{\mc{B}}\bigr\} -
\bigl\{ (I_2)_2 \times_{B^2} \bigl(D\overline{\mc{B}} \ast D\overline{\mc{B}}\bigr) \bigr\},
\end{align}
where $B$ is $\cyinst$ and both sides of the relation belong to {$\Ksmod$}
$($see Notation~{\rm \ref{KsmodK})}.
\end{Theorem}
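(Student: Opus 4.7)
The plan is to combine the three identities of Proposition~\ref{mainprop} by means of an elementary Euler-characteristic identity for trees. The first step is to use the stratification of $\overline{\mc{B}}_n$ by combinatorial type: Corollary~\ref{cor_xitautilde_immersion} provides a locally closed decomposition
\[
\overline{\mc{B}}_n = \bigsqcup_{\tilde{\tau}\in\tilde{\Gamma}_{0,n}} \mc{B}^{\rm bal}(G, \tilde{\tau}),
\]
and since the automorphism group of a stable $n$-tree is trivial (Lemma~\ref{auttau_trivial}), the closed immersion $\xi_{\tilde{\tau}}$ of Proposition~\ref{xitautilde_immersion} restricts to an isomorphism $\mc{B}^{\rm sm}_{\tilde{\tau}}(G) \xrightarrow{\sim} \mc{B}^{\rm bal}(G,\tilde{\tau})$, compatibly with the $\boldsymbol{S}_n$-actions by permutation of marked points. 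Summing over strata thus yields
\[
\{\overline{\mc{B}}\}_n = \sum_{\tilde{\tau}\in\tilde{\Gamma}_{0,n}} \{\mc{B}^{\rm sm}_{\tilde{\tau}}(G)\}
\]
in $\Ksnmod$ for every $n\geq 3$.

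Next I would exploit the combinatorics of trees. For every tree $\tau$ we have $|V_\tau| - |E_\tau| = 1$, and since each edge contributes exactly two non-leaf flags we also have $|F_\tau \setminus L_\tau| = 2|E_\tau|$, so that $|V_\tau| + |E_\tau| - |F_\tau \setminus L_\tau| = 1$. Multiplying by $\{\mc{B}^{\rm sm}_{\tilde{\tau}}(G)\}$ and summing over $\tilde{\tau} \in \tilde{\Gamma}_{0,n}$ gives
\[
\{\overline{\mc{B}}\}_n = \sum_{\tilde{\tau}} \bigl\{V_{\tilde\tau} \times \mc{B}^{\rm sm}_{\tilde\tau}(G)\bigr\} + \sum_{\tilde{\tau}} \bigl\{E_{\tilde\tau} \times \mc{B}^{\rm sm}_{\tilde\tau}(G)\bigr\} - \sum_{\tilde{\tau}} \bigl\{(F_{\tilde\tau} \setminus L_{\tilde\tau}) \times \mc{B}^{\rm sm}_{\tilde\tau}(G)\bigr\}.
\]
Applying the three equalities \eqref{vertice marcato}, \eqref{spigolo marcato} and \eqref{bandiera marcata} of Proposition~\ref{mainprop} to the three sums on the right produces precisely the $n$-th graded piece of the right-hand side of \eqref{maineq}.

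The step which requires the most care is the bookkeeping of structural morphisms to the powers of $B$. The composition products and fiber products appearing on the right of \eqref{maineq} equip their outputs with canonical evaluation maps built from the data of $(\mc{B},e)$, $(\overline{\mc{B}},\overline{e})$ and the involution $\iota$, which in general do not agree with the evaluation of $(\overline{\mc{B}},\overline{e})$ at the flags that have been glued (they may differ by $\iota$). This is exactly why the statement is formulated modulo $B$, via the relation $\equiv_B$ of Notation~\ref{KsmodK}: the combinatorial argument above proves an equality of underlying $\ms{S}$-modules in $\KsmodK$, which is the content of \eqref{maineq} in $\Ksmod/{\equiv_B}$. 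The heavy lifting (the identification of the three sums with the terms built out of the operations $\circ$, $\ast$ and $D$) is already contained in Proposition~\ref{mainprop}, so that, modulo this bookkeeping, what remains is the verification of the arithmetic identity $1 = |V_\tau| + |E_\tau| - |F_\tau \setminus L_\tau|$ carried out above.
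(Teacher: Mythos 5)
Your overall route is the paper's route: stratify $\overline{\mc{B}}_n$ by combinatorial type, identify each stratum with $\mc{B}^{\rm sm}_{\tilde{\tau}}(G)$ via $\xi_{\tilde{\tau}}$, and combine the three identities of Proposition~\ref{mainprop} through the tree relation $\#V_{\tilde\tau}+\#E_{\tilde\tau}-\#(F_{\tilde\tau}\setminus L_{\tilde\tau})=1$. The gap is in how you perform this last combination. You reduce it to a purely numerical identity, "multiplying by $\{\mc{B}^{\rm sm}_{\tilde{\tau}}(G)\}$ and summing over $\tilde\tau$". That manipulation is valid in $\Kstk$, i.e.\ after forgetting the symmetric group actions, but the relation \eqref{maineq} is an equality in $\KsmodK$, whose elements are classes of $\ms S$-modules; what must be checked (and what the paper's proof checks) is that $\{\overline{\mc{B}}_n\}$ agrees with the degree-$n$ part of the right-hand side in $\Ksnmod$, i.e.\ together with the $\boldsymbol{S}_n$-actions. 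The individual classes $\{V_{\tilde\tau}\times\mc{B}^{\rm sm}_{\tilde\tau}(G)\}$ are not elements of $\Ksnmod$: the $\boldsymbol{S}_n$-action permutes the strata, and for $\sigma$ stabilizing the isomorphism class of $\tilde\tau$ the induced (unique, by Lemma~\ref{auttau_trivial}) tree isomorphism acts nontrivially on $V_{\tilde\tau}$, $E_{\tilde\tau}$, $F_{\tilde\tau}\setminus L_{\tilde\tau}$ — e.g.\ for $n=4$ and the tree with two trivalent vertices carrying the markings $\{1,2\}$ and $\{3,4\}$, the permutation $(13)(24)$ exchanges the two vertices. So the cancellation has to be realized by an identification of the finite sets that is functorial under isomorphisms of (gerby) trees; equality of cardinalities stratum by stratum does not by itself produce an equality of classes in the equivariant group, and your final sentence ("what remains is the verification of the arithmetic identity") is exactly the point where equivariance is being swept under the rug.

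This is a localized and repairable gap: the paper fills it with the canonical injection $F_{\tilde\tau}\hookrightarrow V_{\tilde\tau}\sqcup E_{\tilde\tau}\sqcup L_{\tilde\tau}$ of \cite[formula~(4.2)]{GP06}, whose complement is a single canonical point $\ast$; being canonical, it commutes with all tree isomorphisms, hence gives an isomorphism of $\boldsymbol{S}_n$-stacks between $\coprod_{\tilde\tau}(\ast\sqcup F_{\tilde\tau})\times\mc{B}^{\rm sm}_{\tilde\tau}(G)$ and $\coprod_{\tilde\tau}(V_{\tilde\tau}\sqcup E_{\tilde\tau}\sqcup L_{\tilde\tau})\times\mc{B}^{\rm sm}_{\tilde\tau}(G)$, and after cancelling the leaf contributions ($F_{\tilde\tau}=(F_{\tilde\tau}\setminus L_{\tilde\tau})\sqcup L_{\tilde\tau}$, equivariantly) one obtains in $\Ksnmod$ the identity you wrote, to which Proposition~\ref{mainprop} is then applied exactly as you do. To complete your argument you should either quote this canonical injection or construct an explicit bijection $\ast\sqcup(F_{\tilde\tau}\setminus L_{\tilde\tau})\cong V_{\tilde\tau}\sqcup E_{\tilde\tau}$ that is natural under tree isomorphisms (note that naive choices, such as rooting the tree at the leaf marked $1$, are not equivariant under relabelling). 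Your remarks on the structural morphisms to powers of $B$ and on why the statement is only asserted modulo $B$ are consistent with the paper.
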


\begin{proof}
Since the relation \eqref{maineq} is an equality in $\KsmodK$, it suffices to prove that $\bigl\{\overline{\mc{B}}_n\bigr\} \in \Ksnmod$
coincides with the
part in degree $n$ of the right-hand side, for any $n\in \N$.
To this aim, let $\tilde{\Gamma}_{0,n}$ be a set of representatives of the isomorphism classes
of stable gerby $n$-trees associated to~$\mathcal B G$.
Since $\overline{\mc{B}}_n$ is the disjoint union of the locally closed substacks $\mc{B}^{\rm bal}(G, \tilde{\tau})$
(Corollary~\ref{cor_xitautilde_immersion}), for $\tilde{\tau} \in \tilde{\Gamma}_{0,n}$, we obtain the following equalities in $\Kstk$:
\begin{equation}\label{strata}
\bigl\{ \overline{\mc{B}}_n \bigr\} = \sum_{\tilde{\tau} \in \tilde{\Gamma}_{0,n}} \bigl\{ \mc{B}^{\rm bal}(G, \tilde{\tau}) \bigr\}
 = \sum_{\tilde{\tau} \in \tilde{\Gamma}_{0,n}} \bigl\{ \mc{B}^{\rm sm}_{\tilde{\tau}} (G)\bigr\} ,
\end{equation}
where, in the second equality, we have used the fact that $\mc{B}^{\rm bal}(G, \tilde{\tau})$ and $\mc{B}^{\rm sm}_{\tilde{\tau}} (G)$ are isomorphic via
the morphism $\xi_{\tilde{\tau}}$ defined in the previous section.

For any $\tilde{\tau} \in \tilde{\Gamma}_{0,n}$, there is a canonical injection
\begin{equation}\label{GP_can_inj}
F_{\tilde{\tau}} \hookrightarrow V_{\tilde{\tau}} \sqcup E_{\tilde{\tau}} \sqcup L_{\tilde{\tau}}
\end{equation}
that maps $F_{\tilde{\tau}}$ into the complement of a (canonical) point
$\ast \in V_{\tilde{\tau}} \sqcup E_{\tilde{\tau}} \sqcup L_{\tilde{\tau}}$ \cite[formula~(4.2)]{GP06}.
On the one hand, \eqref{strata} yields
\[
\sum_{\tilde{\tau} \in \tilde{\Gamma}_{0,n}} \{ (\ast \, \sqcup \, F_{\tilde{\tau}}) \times \mc{B}^{\rm sm}_{\tilde{\tau}} (G)\}
= \{ \overline{\mc{B}}_n \} + \sum_{\tilde{\tau} \in \tilde{\Gamma}_{0,n}} \{ F_{\tilde{\tau}} \times \mc{B}^{\rm sm}_{\tilde{\tau}} (G)\} ,
\]
on the other hand, identifying
$\ast \sqcup F_{\tilde{\tau}}$ with $V_{\tilde{\tau}} \sqcup E_{\tilde{\tau}} \sqcup L_{\tilde{\tau}}$
via \eqref{GP_can_inj}, we obtain the next equality:
\begin{gather*}
\bigl\{ \overline{\mc{B}}_n \bigr\} + \sum_{\tilde{\tau} \in \tilde{\Gamma}_{0,n}} \bigl\{ F_{\tilde{\tau}} \times \mc{B}^{\rm sm}_{\tilde{\tau}} (G)\bigr\} \\
\qquad {}=
\sum_{\tilde{\tau} \in \tilde{\Gamma}_{0,n}} \bigl\{ V_{\tilde{\tau}} \times \mc{B}^{\rm sm}_{\tilde{\tau}} (G)\bigr\} +
 \sum_{\tilde{\tau} \in \tilde{\Gamma}_{0,n}} \bigl\{ E_{\tilde{\tau}} \times \mc{B}^{\rm sm}_{\tilde{\tau}} (G)\bigr\} + \sum_{\tilde{\tau} \in \tilde{\Gamma}_{0,n}} \bigl\{ L_{\tilde{\tau}} \times \mc{B}^{\rm sm}_{\tilde{\tau}} (G)\bigr\} .
\end{gather*}
As $F_{\tilde{\tau}} = (F_{\tilde{\tau}} \setminus L_{\tilde{\tau}}) \sqcup L_{\tilde{\tau}}$, the previous equality reduces to
\begin{gather*}
\bigl\{ \overline{\mc{B}}_n \bigr\} +
\sum_{\tilde{\tau} \in \tilde{\Gamma}_{0,n}} \bigl\{ (F_{\tilde{\tau}}\setminus L_{\tilde{\tau}}) \times \mc{B}^{\rm sm}_{\tilde{\tau}} (G)\bigr\} =
\sum_{\tilde{\tau} \in \tilde{\Gamma}_{0,n}} \bigl\{ V_{\tilde{\tau}} \times \mc{B}^{\rm sm}_{\tilde{\tau}} (G)\bigr\} + \sum_{\tilde{\tau} \in \tilde{\Gamma}_{0,n}} \bigl\{ E_{\tilde{\tau}} \times \mc{B}^{\rm sm}_{\tilde{\tau}} (G)\bigr\} .
\end{gather*}
The theorem now follows directly from Proposition \ref{mainprop}.
\end{proof}

\begin{Remark}\label{remmainthm}
For any $n\geq 3$, the part of the right-hand side of \eqref{maineq} in degree $n$ expresses $\bigl\{\overline{\mc{B}}_n\bigr\} \in \Ksnmod$
in terms of $I_1, I_2, \overline{\mc{B}}_3, \dots , \overline{\mc{B}}_{n-1}, {\mc{B}}_n$, considered as stacks over $B$ (as we explain below).
Therefore, by induction, the class $\bigl\{\overline{\mc{B}}_n\bigr\}$ can be computed knowing
$I_1, I_2, {\mc{B}}_3, \dots , \allowbreak {\mc{B}}_{n-1}, {\mc{B}}_n$, since $\overline{\mc{B}}_3 = {\mc{B}}_3$.

This follows from the expressions below for $\bigl( \{\mc{B}\} \circ \bigl(\{I_1\}+ D\bigl\{\overline{\mc{B}}\bigr\}\bigr)\bigr)_n$,
$\bigl(\{I_2\} \circ D\bigl\{\overline{\mc{B}}\bigr\}\bigr)_n$ and $\bigl(\bigl\{ (I_2)_2 \times_{B^2} \bigl(D\overline{\mc{B}} \ast D\overline{\mc{B}}\bigr) \bigr\}\bigr)_n$.
For the first one, using \eqref{bcircdb}, we have
\begin{align*}
\bigl( \{\mc{B}\} \circ \bigl(\{ I_1\}+ D\bigl\{\overline{\mc{B}}\bigr\} \bigr)\bigr)_n &= \bigl\{ \mc{B} \circ \bigl(I_1 \amalg D \overline{\mc{B}}\bigr)_n \bigr\} \nonumber \\
&=\sum_m \Biggl\{ \Biggl( \mc{B}_m \times_{B^m} \coprod_{\substack{\underline{k} \in \N^m \\ |\underline{k}|=n}}
{\rm Sh}(\underline{k}) \times \bigl(I_1 \amalg D\overline{\mc{B}}\bigr)_{\underline{k}} \Biggr) \Big/\boldsymbol{S}_m \Biggr\} 	 .
\end{align*}
Notice that $m\geq 3$ (otherwise $\mc{B}_m = \varnothing$) and,
for the multi-index $\underline{k}=(k_1, \dots , k_m)$, $k_1, \dots , k_m \geq 1$
(otherwise $\bigl(I_1 \coprod D\overline{\mc{B}}\bigr)_{k_i} = \varnothing$).
Now, the condition $|\underline{k}|= \sum_{i=1}^m k_i=n$ implies, on one hand that
\[
m \leq k_1+\dots +k_m=n , \qquad \mbox{hence} \quad m\leq n ,
\]
on the other hand that, for every $j=1, \dots , m$,
\[
k_j = n - \sum_{i\not= j}k_i \leq n-2 .
\]
For the second one,
\begin{gather}\label{main_eq_2nd+}
\bigl(\{I_2\} \circ D\bigl\{\overline{\mc{B}}\bigr\}\bigr)_n = \Biggl\{
\Biggl( (I_2)_2 \times_{B^2} \coprod_{\substack{\underline{k} \in \N^2 \\ |\underline{k}|=n}}
{\rm Sh}(\underline{k}) \times D\overline{\mc{B}}_{\underline{k}} \Biggr) \Big/\boldsymbol{S}_2 \Biggr\} .
\end{gather}
Since $\bigl(D\overline{\mc{B}}\bigr)_k = \varnothing$ for $k<2$,
we deduce as before that $\{I_2\} \circ D\bigl\{\overline{\mc{B}}\bigr\}_n$ is determined by
$\{ I_2 \}$ and by $\bigl\{\overline{\mc{B}}_k\bigr\}$ for $k=3, \dots, n-1$.
The same arguments apply to the summand $\bigl(\bigl\{ (I_2)_2 \times_{B^2} \bigl(D\overline{\mc{B}} \ast D\overline{\mc{B}}\bigr) \bigr\}\bigr)_n$
(see~\eqref{i2xdb2}), hence we omit the details.
\end{Remark}

\section[Computations in the Grothendieck group of mixed Hodge structures]{Computations in the Grothendieck group\\ of mixed Hodge structures}\label{section7}

In this section, we work over the field of complex numbers $\mathbb{C}$
and derive formulae for the class, in the Grothendieck group, of the mixed Hodge structure over $\mathbb{Q}$
of $\overline{\mc{B}}_n$. Since $\overline{\mc{B}}_n$ and ${\mc{B}}_n$ are Deligne--Mumford stacks
and we consider their cohomology with rational coefficients,
we can replace them with their coarse moduli spaces (cf.~\cite{Beh04}),
 which are respectively projective and quasi-projective varieties,
and will be denoted with the same symbols, $\overline{\mc{B}}_n$ and ${\mc{B}}_n$, in this section.

Following \cite{GP06} (see also \cite{BagnarolMN}), let $K_0^{\boldsymbol{S}_n}({\rm Var})$
be the Grothendieck group of complex $\boldsymbol{S}_n$-varieties. It is the quotient of the
free abelian group on the isomorphism classes
$[X]$ of quasi-projective varieties with an action of $\boldsymbol{S}_n$ by the subgroup generated by the relations
$[X] - [Y] - [X\setminus Y]$ whenever $Y$ is a closed $\boldsymbol{S}_n$-invariant subvariety of $X$.

The notions of ${\ms S}$-module, ${\ms S}$-module over $B$, and rooted ${\ms S}$-module makes sense
for varieties. Notice that in this case an ${\ms S}$-module is called an
${\ms S}$-variety in \cite{GP06} and \cite{BagnarolMN}.
The Grothendieck groups $K_0^{\ms S}({\rm Var})$, $K_0^{\ms S}({\rm Var}/B)$
and $K_0^{{\ms S}, r}({\rm Var}/B)$ are defined accordingly.
Moreover, we can consider the Day convolution and the composition in this case,
as a consequence of Proposition \ref{composition_on_Grothendieck_gr},
we have an operation
\[
\circ \colon \ K_0^{\ms S}({\rm Var}/B) \times \bigl(K_0^{{\ms S}, r}({\rm Var}/B)\bigr)_1 \to K_0^{\ms S}({\rm Var}) ,
\]
which is $K_0^{\ms S}({\rm Var}/B)$-linear in the first argument and such that $[X]\circ [Y] = [X\circ Y]$
for all ${\ms S}$-varieties over $B$, $X, Y$, with $Y$ rooted.

From the results of the previous section we have the following relation
\begin{align}\label{maineq_var}
 \bigl[\overline{\mc{B}}\bigr] \, &\equiv_B \, [\mc{B}] \circ \bigl( [I_1] + D\bigl[\overline{\mc{B}}\bigr] \bigr) +
[I_2] \circ D\bigl[\overline{\mc{B}}\bigr] -
\bigl[ (I_2)_2 \times_{B^2} \bigl(D\overline{\mc{B}} \ast D\overline{\mc{B}}\bigr) \bigr]
\end{align}
in $K_0^{\ms S}({\rm Var})$.
For computations, we need to express the three summands in the right-hand side of this equation in a more explicit way.
The part in degree $n$ of the first summand is
\begin{align}
\bigl( [\mc{B}] \circ \bigl([ I_1 ]+ D \bigl[\overline{\mc{B}}\bigr] \bigr)\bigr)_n &= \Bigl[ \mc{B} \circ \Bigl(I_1 \coprod D \overline{\mc{B}}\Bigr)_n \Bigr] \nonumber \\
&=\sum_{m=3}^n \Biggl[ \Biggl( \mc{B}_m \times_{B^m} \coprod_{\substack{\underline{k} \in \N^m \\ |\underline{k}|=n}}
{\rm Sh}(\underline{k}) \times \Bigl(I_1 \coprod D\overline{\mc{B}}\Bigr)_{\underline{k}} \Biggr) \Big/\boldsymbol{S}_m \Biggr] 	.\label{main_eq__var_1st+}
\end{align}
For any $m=3, \dots , n$ and for any sequence
$\boldsymbol{c}=(c_1, \dots , c_m) \in {\overline{I}_{\boldsymbol{\mu}}(\BG)}^m$
of conjugacy classes of~$G$, let $\mc{B}_{\boldsymbol{c}} = \balc \cap \mc{B}_m$
(cf.\ Definition~\ref{balc}), let us denote $\balc$ with $\overline{\mc{B}}_{\boldsymbol{c}}$, and let us stress that in this section
we denote with the same symbols stacks and their coarse moduli spaces.
Then, using this notation, we rewrite the right-hand side of~\eqref{main_eq__var_1st+} as follows:
\begin{gather}\label{main_eq__var_1st+_2}
 \sum_{m=3}^n \Biggl[ \Biggl( \coprod_{\boldsymbol{c} \in {\overline{I}_{\boldsymbol{\mu}}(\BG)}^m}
\coprod_{\substack{\underline{k} \in \N^m \\ |\underline{k}|=n}} \mc{B}_{\boldsymbol{c}} \times
{\rm Sh}(\underline{k}) \times_{i=1}^m \Bigl( (\{ c_i \})_{k_i} \coprod \overline{\mc{B}}_{k_i, \iota (c_i)}\Bigr) \Biggr) \Big/\boldsymbol{S}_m \Biggr] 	,
\end{gather}
where $\overline{\mc{B}}_{k_i,c_i} \subseteq \overline{\mc{B}}_{k_i +1}$ consists of covers such that the evaluation
at the last marked point is $c_i$, and $(\{ c_i \})_{k_i} = \operatorname{Spec} (\mathbb{C})$ if $k_i=1$, empty otherwise.
Using the same notation, for the second summand of \eqref{maineq_var}, we rewrite \eqref{main_eq_2nd+} as follows:
\begin{align}
\bigl([I_2] \circ D[\overline{\mc{B}}]\bigr)_n &= \Biggl[
\Biggl( (I_2)_2 \times_{B^2} \coprod_{\substack{\underline{k} \in \N^2 \\ |\underline{k}|=n}}
{\rm Sh}(\underline{k}) \times D\overline{\mc{B}}_{\underline{k}} \Biggr) \Big/\boldsymbol{S}_2 \Biggr] \nonumber \\
&=
\Biggl[ \Biggl( \coprod_{c \in {\overline{I}_{\boldsymbol{\mu}}(\BG)}} \coprod_{\substack{\underline{k} \in \N^2 \\ |\underline{k}|=n}}
{\rm Sh}(\underline{k}) \times \overline{\mc{B}}_{k_1,c} \times \overline{\mc{B}}_{k_2, \iota (c)} \Biggr) \Big/\boldsymbol{S}_2 \Biggr].\label{main_eq_var_2nd+}
\end{align}
Finally, the third summand is
\begin{gather}\label{main_eq_var_3rd+}
\bigl(\bigl[ (I_2)_2 \times_{B^2} \bigl(D\overline{\mc{B}} \ast D\overline{\mc{B}}\bigr) \bigr]\bigr)_n =
\sum_{c \in {\overline{I}_{\boldsymbol{\mu}}(\BG)}} \sum_{\substack{\underline{k} \in \N^2 \\ |\underline{k}|=n}}
\bigl[ {\rm Sh}(\underline{k}) \times \overline{\mc{B}}_{k_1,c} \times \overline{\mc{B}}_{k_2, \iota (c)} \bigr] .
\end{gather}

Let us now consider the category MHS of mixed Hodge structures over $\mathbb{Q}$,
for which our basic references are \cite{PS08} and~\cite{Vois02}
(it might be useful to consult also \cite[Section~5]{GP06} and \cite[Section~2.2]{BagnarolMN} for the purposes of the present work).
Since MHS is a $\mathbb{Q}$-linear rigid
abelian tensor category, we can consider the associated Grothendieck group $K_0({\rm MHS})$, which is isomorphic to the
Grothendieck group of pure Hodge structures over $\mathbb{Q}$ (see, e.g., \cite[Corollary~3.9]{PS08}).
Moreover, for any integer $n\geq 0$, we denote with $K_0^{\boldsymbol{S}_n}({\rm MHS})$ (respectively, with $K_0^{\ms S}({\rm MHS})$)
 the Grothendieck group
of the category of functors from $\boldsymbol{S}_n$ (respectively, from ${\ms S}$) to MHS.
The Hodge--Grothendieck characteristic (called the Serre characteristic in~\cite{GP06},
and denoted HG-characteristic hereafter)
yields a group homomorphism
\[
\mathfrak{e} \colon \ K_0^{\boldsymbol{S}_n}({\rm Var}) \to K_0^{\boldsymbol{S}_n}({\rm MHS}) ,
\]
for any $n$, where $\mathfrak{e}$ is defined on generators by
\[
\mathfrak{e}([X]):= \sum_{i\geq 0}(-1)^i \bigl[ \bigl( H^i_{\rm c} (X, \mathbb{Q}), W_\bullet , F^\bullet \bigr) \bigr] ,
\]
here $\left( H^i_{\rm c} (X, \mathbb{Q}), W_\bullet , F^\bullet \right)$ is the natural mixed Hodge structure
of the compactly supported cohomology group $H^i_{\rm c} (X, \mathbb{Q})$ of the variety~$X$, as defined in~\cite{DeligneII},
\cite{DeligneIII}.
Actually, $K_0^{\ms S}({\rm Var})$ and $K_0^{\ms S}({\rm MHS})$ have a richer structure, namely that of complete composition
algebra \cite[Theorems~2.2 and~5.1]{GP06}, and
$\mathfrak{e} \colon K_0^{\ms S}({\rm Var}) \to K_0^{\ms S}({\rm MHS})$ is a morphism of complete filtered algebras with composition operations \cite[Section~5]{GP06}.
In particular, $\mathfrak{e} \colon K_0({\rm Var}) \to K_0({\rm MHS})$ is a ring homomorphism
(where the ring structures are induced by the cartesian product on $K_0({\rm Var})$ and by the tensor product
on $K_0({\rm MHS})$). However, notice that the composition operation of~\cite{GP06} is different from ours,
so we can not apply their results directly.

For completeness we recall that, for every mixed Hodge structure $(V, W_\bullet, F^\bullet)$
over $\mathbb{Q}$,
its Hodge numbers are defined as
\[
h^{p,q}(V, W_\bullet, F^\bullet) := \dim_{\mathbb{C}} {\rm gr}^p_F {\rm gr}^W_{p+q}(V\otimes \mathbb{C}) ,
\]
and its Hodge--Euler polynomial is
\[
{\rm \bf e} (V, W_\bullet, F^\bullet) := \sum_{p, q \in \mathbb{Z}} h^{p,q}(V, W_\bullet, F^\bullet) u^p v^q \in
\mathbb{Z}\bigl[u, v, u^{-1}, v^{-1}\bigr] .
\]
This induces a ring homomorphism
\[
{\rm \bf e} \colon \ K_0({\rm MHS}) \to \mathbb{Z}\bigl[u, v, u^{-1}, v^{-1}\bigr] ,
\]
see, e.g., \cite[Examples~3.2]{PS08}. In particular, for any variety~$X$, its Betti numbers are encoded in~${\rm \bf e} ( \mathfrak{e} ([X]) )$.

We will need the following result, which we prove in Section \ref{proofHGquot}
(for the case where $X$ is smooth, see, e.g., \cite[Proposition~4.3]{Flo}).
\begin{Proposition}\label{HGquot}
Let $X$ be a quasi-projective variety with an action of a finite group $G$.
Then
\[
\mathfrak{e}([X/G]) = \sum_{i\geq 0}(-1)^i \bigl[ \bigl( H^i_{\rm c} (X, \mathbb{Q}), W_\bullet , F^\bullet \bigr)^G \bigr] ,
\]
where
\[
\bigl( H^i_c (X, \mathbb{Q}), W_\bullet , F^\bullet \bigr)^G =
\frac{1}{|G|}\sum_{g\in G}g_*\bigl( H^i_c (X, \mathbb{Q}), W_\bullet , F^\bullet \bigr)
\]
is the $G$-invariant part of $\bigl( H^i_c (X, \mathbb{Q}), W_\bullet , F^\bullet \bigr)$.
\end{Proposition}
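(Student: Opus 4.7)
The plan is to reduce the calculation of $\mathfrak{e}([X/G])$ to the identification of $H^\ast_c([X/G],\mathbb{Q})$ with the $G$-invariant sub-mixed Hodge structure of $H^\ast_c(X,\mathbb{Q})$, exploiting the fact that the quotient morphism $\pi\colon X\to [X/G]$ is a $G$-torsor, hence finite étale in the stack-theoretic sense.

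First I would unwind the definition
\[
\mathfrak{e}([X/G]) \;=\; \sum_{i\geq 0}(-1)^i \bigl[ H^i_c([X/G],\mathbb{Q}) \bigr]
\]
in $K_0(\mathrm{MHS})$, where the mixed Hodge structure on the compactly supported cohomology of the Deligne--Mumford quotient stack is the one inherited from the smooth covering by $X$ (equivalently, from the simplicial scheme $\{X\times G^\bullet\}$ attached to the action, via Deligne's construction).

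The core step is to produce, for each $i$, a canonical isomorphism
\[
H^i_c([X/G],\mathbb{Q}) \;\cong\; H^i_c(X,\mathbb{Q})^G
\]
of mixed Hodge structures. At the level of $\mathbb{Q}$-vector spaces, this is a standard consequence of the torsor structure: $\pi_\ast \mathbb{Q}_X$ is a locally constant sheaf on $[X/G]$ with stalk $\mathbb{Q}[G]$, its $G$-invariant part is the constant sheaf $\mathbb{Q}_{[X/G]}$, and the functor $(-)^G$ is exact on $\mathbb{Q}[G]$-modules. For the MHS compatibility I would argue as follows: each $g\in G$ is an algebraic automorphism of $X$, so $g_\ast$ acts on $H^i_c(X,\mathbb{Q})$ by morphisms of MHS, and therefore the averaging idempotent $\frac{1}{|G|}\sum_{g\in G} g_\ast$ is itself a morphism of MHS. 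Its image is a sub-MHS realising $H^i_c(X,\mathbb{Q})^G$, and this sub-MHS coincides, by functoriality of Deligne's construction under the étale atlas $\pi$, with the MHS on $H^i_c([X/G],\mathbb{Q})$.

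Summing with alternating signs and invoking additivity of the class in $K_0(\mathrm{MHS})$ then yields the stated identity. The main obstacle is precisely the MHS compatibility of the identification above: the vector-space statement is classical, but one must verify that Deligne's functorial mixed Hodge theory on compactly supported cohomology interacts correctly both with the finite group action on $X$ and with the stack-theoretic quotient. Once these functoriality properties are granted, the remainder of the argument is purely formal.
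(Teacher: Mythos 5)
Your argument, as written, defers exactly the point that the proposition is about. The final step of your core claim---that the invariant sub-MHS of $H^i_{\rm c}(X,\mathbb{Q})$ ``coincides, by functoriality of Deligne's construction under the \'etale atlas,'' with the MHS on the compactly supported cohomology of the quotient---is not a consequence of formal functoriality you can simply invoke; it is the whole content of the statement, and you yourself flag it as something to be ``granted.'' There is also a mismatch of objects: in the paper $\mathfrak{e}$ is defined on $K_0^{\boldsymbol{S}_n}({\rm Var})$, so the symbol $[X/G]$ denotes the class of the quotient \emph{variety}, and the relevant mixed Hodge structure is Deligne's MHS on $H^i_{\rm c}(X/G,\mathbb{Q})$. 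You instead work with the quotient stack and endow its compactly supported cohomology with a simplicial-scheme MHS; this is not directly covered by Deligne's theory (which gives ordinary cohomology of simplicial schemes, while $H_{\rm c}$ of a non-proper stack requires an additional construction, e.g.\ a compactification or passage to the coarse space), and in any case it would still have to be compared with the MHS of the coarse variety, which is the one entering $\mathfrak{e}$. Neither comparison appears in your proposal.

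The gap can be closed along the lines you sketch, but without stacks and with an actual argument: the quotient map $q\colon X\to X/G$ is finite, hence proper, so $q^*$ on $H^i_{\rm c}$ is a morphism of MHS; it is injective because the topological transfer satisfies $\frac{1}{|G|}\,{\rm tr}\circ q^*={\rm id}$; its image is $H^i_{\rm c}(X,\mathbb{Q})^G$ (your sheaf-theoretic computation of $(q_*\mathbb{Q}_X)^G$ together with exactness of $G$-invariants over $\mathbb{Q}$); and strictness of morphisms of MHS upgrades $q^*$ to an isomorphism of MHS onto the invariant part, whence the identity in $K_0({\rm MHS})$ by additivity. This is essentially the argument cited in the paper for smooth $X$ (cf.~\cite{Flo}), and it works for quasi-projective $X$. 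The paper itself takes a genuinely different route that never proves an MHS isomorphism: it factors $\mathfrak{e}$ through the Grothendieck group of Chow motives \cite{P09}, uses $W(X/G)\cong W(X)^G$ for Gillet--Soul\'e weight complexes \cite{dBN98,GS96}, deduces ${\rm gr}^W_n H^{i+n}_{\rm c}(X/G,\mathbb{Q})\cong \bigl({\rm gr}^W_n H^{i+n}_{\rm c}(X,\mathbb{Q})\bigr)^G$, and concludes via $K_0({\rm MHS})\cong K_0({\rm HS})$ \cite{PS08}. Either path is acceptable, but the key identification must be argued or precisely cited; in your proposal it is assumed.
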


Applying $\mathfrak{e}$ to \eqref{maineq_var}, we obtain
\begin{align*}
\mathfrak{e} \bigl( \bigl[\overline{\mc{B}}\bigr] \bigr) &= \mathfrak{e} \bigl( [\mc{B}] \circ \bigl( [I_1] + D\bigl[\overline{\mc{B}}\bigr] \bigr) \bigr)+
\mathfrak{e} \bigl( [I_2] \circ D[\overline{\mc{B}}] \bigr) -
\mathfrak{e} \bigl( [ (I_2)_2 \times_{B^2} (D\overline{\mc{B}} \ast D\overline{\mc{B}}) ] \bigr) .
\end{align*}
Using \eqref{main_eq__var_1st+_2},
\eqref{main_eq_var_2nd+} and \eqref{main_eq_var_3rd+},
we rewrite the part in degree $n$ of the right-hand side
of the previous equation as follows:
\begin{align*}
& \sum_{m=3}^n \mathfrak{e} \Biggl( \Biggl[ \Biggl(
\coprod_{\boldsymbol{c} \in {\overline{I}_{\boldsymbol{\mu}}(\BG)}^m}
\coprod_{\substack{\underline{k} \in \N^m \\ |\underline{k}|=n}}
 \mc{B}_{\boldsymbol{c}} \times
{\rm Sh}(\underline{k}) \times_{i=1}^m \Bigl( (\{ c_i \})_{k_i} \coprod \overline{\mc{B}}_{k_i, \iota (c_i)}\Bigr) \Biggr) \Big/\boldsymbol{S}_m \Biggr] \Biggr) \\
& \qquad{}+
\mathfrak{e} \Biggl(
\Biggl[ \Biggl( \coprod_{c \in {\overline{I}_{\boldsymbol{\mu}}(\BG)}} \coprod_{\substack{\underline{k} \in \N^2 \\ |\underline{k}|=n}}
{\rm Sh}(\underline{k}) \times \overline{\mc{B}}_{k_1,c} \times \overline{\mc{B}}_{k_2, \iota (c)} \Biggr) \Big/\boldsymbol{S}_2 \Biggr]
\Biggr) \\
& \qquad{}- \sum_{c \in {\overline{I}_{\boldsymbol{\mu}}(\BG)}} \sum_{\substack{\underline{k} \in \N^2 \\ |\underline{k}|=n}}
\mathfrak{e} \bigl( \bigl[ {\rm Sh}(\underline{k}) \times \overline{\mc{B}}_{k_1,c} \times \overline{\mc{B}}_{k_2, \iota (c)} \bigr] \bigr) .
\end{align*}
Using Proposition \ref{HGquot}, we deduce the following expressions for the three summands above:
the class of the $\boldsymbol{S}_m$-invariant part of
\begin{gather*}
 \sum_{\substack{3\leq m \leq n \\ s, t\geq 0}} (-1)^{s+t} \Biggl(
\bigoplus_{\substack{\boldsymbol{c} \in \overline{I}_{\boldsymbol{\mu}}^m \\
\underline{k} \in \N^m \\ |\underline{k}|=n}}
H^s_{\rm c} ( \mc{B}_{\boldsymbol{c}}) \otimes
H^0({\rm Sh}(\underline{k})) \otimes H^t
\Bigl( \times_{i=1}^m ( (\{ c_i \})_{k_i} \coprod \overline{\mc{B}}_{k_i, \iota (c_i)})\Bigr) \Biggr)
\end{gather*}
for the first summand, where ${\overline{I}_{\boldsymbol{\mu}}} = {\overline{I}_{\boldsymbol{\mu}}(\BG)}$;
\begin{gather*}
\sum_{s, t \geq 0} (-1)^{s+t}
\Biggl[ \Biggl( \bigoplus_{\substack{c \in {\overline{I}_{\boldsymbol{\mu}}(\BG)}\\
\underline{k} \in \N^2 , |\underline{k}|=n}}
H^0({\rm Sh}(\underline{k})) \otimes H^s\bigl(\overline{\mc{B}}_{k_1,c}\bigr)
\otimes H^t\bigl(\overline{\mc{B}}_{k_2, \iota (c)} \bigr) \Biggr)^{\boldsymbol{S}_2} \Biggr]
\end{gather*}
for the second, and
\begin{gather*}
- \sum_{c \in {\overline{I}_{\boldsymbol{\mu}}(\BG)}} \sum_{\substack{\underline{k} \in \N^2 \\ |\underline{k}|=n}}
\sum_{s, t \geq 0} (-1)^{s+t} \bigl[ H^0( {\rm Sh}(\underline{k})) \otimes H^s\bigl(\overline{\mc{B}}_{k_1,c}\bigr) \otimes
H^t\bigl(\overline{\mc{B}}_{k_2, \iota (c)} \bigr) \bigr]
\end{gather*}
for the third one.
\begin{Example}
If $n=3$, $\overline{\mc{B}}_3 = \mc{B}_3$. As we will explain at the beginning of Section~\ref{abelian case},
$\mc{B}_3$ is a~finite set, which is in bijection (after the choice of a geometric basis of the fundamental group
$\pi_1\bigl(\mathbb{P}^1 \setminus \{ 0, 1, \infty \}, p_0\bigr)$) with the set
\[
\bigl\{ (g_1, g_2, g_3) \in G^3 \mid g_1 \cdot g_2 \cdot g_3 =1 \bigr\}\big/G ,
\]
where $G$ acts via simultaneous conjugation.

Let us now consider the case where $n=4$. In this case we have $m=4$ and $m=3$ in the formula above.

When $m=4$, necessarily $k_1 = \dots = k_m=1$, hence $\overline{\mc{B}}_{k_i,c_i}=\varnothing$,
$( \{ c_i \})_{k_i}=\operatorname{Spec} (\mathbb{C})$, ${\rm Sh}(\underline{k}) = \boldsymbol{S}_m$, and the first summand
yields
\[
\sum_{s\geq 0} (-1)^{s} \bigl[
\oplus_{\boldsymbol{c} \in {\overline{I}_{\boldsymbol{\mu}_r}(\BG)}^4}
H^s_{\rm c} ( \mc{B}_{\boldsymbol{c}}) \bigr] = \sum_{s\geq 0} (-1)^{s} \bigl[
H^s_{\rm c} ( \mc{B}_4) \bigr] .
\]
Notice that the (tensor) factor $H^0({\rm Sh}(\underline{k})) =H^0( \boldsymbol{S}_m)$ cancels when we take the
$\boldsymbol{S}_m$-invariants.

When $m=3$, $\underline{k}=(2, 1, 1), (1, 2, 1), (1, 1, 2)$, $\overline{\mc{B}}_{2, \iota (c_i)} = {\mc{B}}_{2, \iota (c_i)}$,
while $\overline{\mc{B}}_{1, \iota (c_i)} = \varnothing$,
therefore the first summand yields the class of the $\boldsymbol{S}_3$-invariant part of
\begin{align*}
&\oplus_{\boldsymbol{c} \in {\overline{I}_{\boldsymbol{\mu}}(\BG)}^3}
H^0 ( \mc{B}_{\boldsymbol{c}}) \otimes
H^0({\rm Sh}(2,1,1)) \otimes H^0
( {\mc{B}}_{2, \iota (c_1)}) \\
&\qquad \oplus_{\boldsymbol{c} \in {\overline{I}_{\boldsymbol{\mu}}(\BG)}^3}
H^0 ( \mc{B}_{\boldsymbol{c}}) \otimes
H^0({\rm Sh}(1,2,1)) \otimes H^0
( {\mc{B}}_{2, \iota (c_2)}) \\
&\qquad \oplus_{\boldsymbol{c} \in {\overline{I}_{\boldsymbol{\mu}}(\BG)}^3} H^0 ( \mc{B}_{\boldsymbol{c}}) \otimes
H^0({\rm Sh}(1,1,2)) \otimes H^0
( {\mc{B}}_{2, \iota (c_3)}) .
\end{align*}
From the second summand, we have contributions only when $\underline{k}=(2,2)$, in this case we get
\[
\bigl[ \bigl( \oplus_{c \in {\overline{I}_{\boldsymbol{\mu}}(\BG)}}
H^0({\rm Sh}(\underline{k})) \otimes H^0({\mc{B}}_{2,c})
\otimes H^0({\mc{B}}_{2, \iota (c)} ) \bigr)^{\boldsymbol{S}_2} \bigr] .
\]
The third summand is
\[
\bigl[ \oplus_{c \in {\overline{I}_{\boldsymbol{\mu}}(\BG)}}
H^0({\rm Sh}(\underline{k})) \otimes H^0({\mc{B}}_{2,c})
\otimes H^0({\mc{B}}_{2, \iota (c)} )\bigr] .
\]

The previous formulae allow to determine $\mathfrak{e}\bigl(\bigl[\overline{\mathcal{B}}_4\bigr]\bigr) \in K_0^{\boldsymbol{S}_4}({\rm MHS})$
from the class $[\mathcal{B}]$, as an element of $K_0^{\ms S}({\rm Var}/B)$.
Applying again \eqref{maineq_var}, and using
the previous expression for $\mathfrak{e}\bigl(\bigl[\overline{\mathcal{B}}_4\bigr]\bigr)$ we can
compute $\mathfrak{e}\bigl(\bigl[\overline{\mathcal{B}}_5\bigr]\bigr) \in K_0^{\boldsymbol{S}_5}({\rm MHS})$
in terms of $[\mathcal{B}] \in K_0^{\ms S}({\rm Var}/B)$. Proceeding in this way one can
compute $\mathfrak{e}\bigl(\bigl[\overline{\mathcal{B}}_n\bigr]\bigr)$, for any $n$, in terms of $[\mathcal{B}] \in K_0^{\ms S}({\rm Var}/B)$.

So the problem of determining $\mathfrak{e}\bigl(\bigl[\overline{\mathcal{B}}\bigr]\bigr)$ is reduced to the
knowledge of $[\mathcal{B}] \in K_0^{\ms S}({\rm Var}/B)$. In Section~\ref{abelian case},
we will discuss a further simplification that occurs in the case where $G$ is abelian.
\end{Example}

\subsection{Proof of Proposition \ref{HGquot}}\label{proofHGquot}
We use the fact that the HG-characteristic factors through the Grothendieck group of motives \cite[Appendix~A]{P09}
and that the motive of $X/G$ is isomorphic to the $G$-invariant part of the motive of $X$ \cite[Corollary~5.3]{dBN98}.
Here we follow the presentation given in \cite{dBN98,GS96}, \cite[Section~2]{G01}
and we refer to these articles (and to the references therein) for a more complete treatment.

We denote with $\mathbf{V}$ the category of smooth projective varieties over $\mathbb{C}$
(here we avoid the most general situation because we don't need it,
but notice that one can replace $\mathbb{C}$ with any algebraically closed field of characteristic $0$,
and for some purposes even with any field of characteristic $0$).
To define the category of (pure effective) Chow motives over $\mathbb{C}$, $\mathbf{M}$,
one first consider the category $\mathbf{C}$ of correspondences.
The objects of $\mathbf{C}$ are the same as those of $\mathbf{V}$, morphisms are defined as follows:
\[
\operatorname{Hom}_{\mathbf{C}}(X,Y) := A^{\dim X}(X\times Y) ,
\]
where $A^n$ denotes the Chow group of algebraic cycles of codimension $n$ modulo rational equivalence
tensored by $\mathbb{Q}$. It is possible to define a composition of morphisms such that $\mathbf{C}$
becomes an additive tensor category.
The category~$\mathbf{M}$ is the pseudo-abelian closure of $\mathbf{C}$.
An object of~$\mathbf{M}$ is a~pair~$(X, p)$, where~$X$ is a smooth projective variety and $p\in A^{\dim X}(X\times X)$
satisfies $p^2 =p$.
There is a natural functor $\mathbf{V} \to \mathbf{M}$ that sends~$X$ to~$(X, [\Delta_X])$,
where $\Delta_X \subseteq X\times X$ is the class of the diagonal.
Disjoint union and product of varieties can be extended to~$\mathbf{M}$, where they are denoted by $\oplus$
and $\otimes$, respectively.

The Grothendieck group $K_0(\mathbf{M})$ is the quotient of the free abelian group on the isomorphism classes $[M]$
of objects $M$ in $\mathbf{M}$ by the subgroup generated by elements of the form
$[M]-[M']-[M'']$ whenever $M\cong M'\oplus M''$.
The operation $\otimes$ induces a ring structure on $K_0(\mathbf{M})$.

The following result holds true over any field of characteristic $0$ (see \cite[Theorem~4]{GS96} and~\cite{GN02}).
\begin{Theorem}
There exists a unique ring homomorphism $\overline{h} \colon K_0({\rm Var}) \to K_0(\mathbf{M})$
such that $\overline{h}([X])=[(X, [\Delta_X])]$ for $X$ smooth and projective.
\end{Theorem}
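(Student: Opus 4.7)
The plan is to use Bittner's presentation of $K_0(\mathrm{Var})$ in order to reduce the construction to checking a single non-trivial relation, the blow-up relation. In characteristic zero, Bittner's theorem states that $K_0(\mathrm{Var})$ is generated as an abelian group by classes $[X]$ of smooth projective varieties, subject only to $[\varnothing]=0$ and the blow-up relation
\[
[\mathrm{Bl}_Y X] - [E] = [X] - [Y],
\]
where $Y\hookrightarrow X$ is a closed immersion of smooth projective varieties and $E$ is the exceptional divisor. This presentation rests on Hironaka's resolution of singularities (to reduce arbitrary classes to differences of classes of smooth projective varieties via the scissor relations) together with the weak factorization theorem (to cut down the needed relations). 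Uniqueness of $\overline{h}$ is then immediate: the normalization $[X]\mapsto [(X,[\Delta_X])]$ on smooth projective generators determines any ring homomorphism out of $K_0(\mathrm{Var})$.

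For existence, I would define $\overline{h}$ on generators by $[X]\mapsto [(X,[\Delta_X])]$ and verify compatibility with the blow-up relation in $K_0(\mathbf{M})$. This is a direct consequence of Manin's blow-up formula: for $Y\hookrightarrow X$ a smooth closed immersion of codimension $r$ in a smooth projective $X$, there are canonical isomorphisms in $\mathbf{M}$
\[
h(\mathrm{Bl}_Y X) \cong h(X) \oplus \bigoplus_{i=1}^{r-1} h(Y)\otimes \mathbb{L}^{\otimes i}, \qquad h(E) \cong \bigoplus_{i=0}^{r-1} h(Y)\otimes \mathbb{L}^{\otimes i},
\]
where $\mathbb{L}$ denotes the Lefschetz motive and $h(Z) := (Z,[\Delta_Z])$. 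Passing to $K_0(\mathbf{M})$ and subtracting, the Lefschetz-twisted summands cancel pairwise and one is left with precisely $[h(\mathrm{Bl}_Y X)] - [h(E)] = [h(X)] - [h(Y)]$, which is the image of the Bittner relation under $\overline{h}$.

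Multiplicativity follows from the fact that, under the natural isomorphism $(X\times Y)\times (X\times Y)\cong (X\times X)\times (Y\times Y)$, the diagonal cycle $[\Delta_{X\times Y}]$ corresponds to the external product $[\Delta_X]\times [\Delta_Y]$, so $h(X\times Y)\cong h(X)\otimes h(Y)$ in $\mathbf{M}$. Combined with the behavior on generators, this shows that $\overline{h}$ is a ring homomorphism, completing the construction.

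The main technical obstacle is the Manin blow-up formula itself, which, in contrast to its analogue in singular cohomology, must be realized at the level of Chow correspondences rather than of cohomology groups. The standard derivation combines the projective bundle formula applied to $E = \mathbb{P}(N_{Y/X}) \to Y$ with an explicit construction of correspondences between $\mathrm{Bl}_Y X$, $X$ and $Y$ produced via deformation to the normal cone; assembling these into mutually inverse morphisms in $\mathbf{M}$ is the heart of the argument. Once this input is in place, Bittner's presentation and the multiplicativity check assemble into the theorem in a purely formal manner.
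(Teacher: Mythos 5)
Your proposal is correct, but it takes a genuinely different route from the one the paper relies on. The paper does not prove this theorem; it recalls it from Gillet--Soul\'e and Guill\'en--Navarro Aznar, where $\overline{h}$ is obtained from the weight complex construction (resp.\ an extension criterion for functors defined on smooth schemes), both resting on resolution of singularities and cohomological descent via (cubical) hyperresolutions. You instead invoke Bittner's presentation of $K_0({\rm Var})$ in characteristic zero --- smooth projective generators, blow-up relations, proved using resolution plus weak factorization --- and then check the single relation $[\mathrm{Bl}_Y X]-[E]=[X]-[Y]$ in $K_0(\mathbf{M})$ via Manin's blow-up and projective bundle formulas for Chow motives, with multiplicativity and uniqueness handled on generators; this is a standard and complete argument for the statement as given, arguably more formal and self-contained once Manin's formula (a classical theorem, so you need not re-derive it via deformation to the normal cone) and Bittner's theorem are granted. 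What the paper's route buys, and yours does not, is the finer statement used immediately afterwards: $\overline{h}([X])$ is the class of the weight complex $W(X)$ in $K_0({\rm Hot}^b(\mathbf{M}))$, and it is this weight-complex description (together with del Ba\~no--Navarro's $W(X/G)\cong W(X)^G$) that drives the proof of Proposition \ref{HGquot}. So your construction proves the theorem, but if one adopted it in place of the cited one, the later argument would still need the Gillet--Soul\'e machinery or a separate verification that the Bittner-built $\overline{h}$ agrees with the weight-complex class; conversely, your approach trades hyperresolution descent for the weak factorization theorem as the deep geometric input.
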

Let us recall for later use that $\overline{h}([X])$ coincides with the class in $K_0({\rm Hot}^b (\mathbf{M}))$
 of the weight complex $W(X)$ of $X$ \cite[Sections~2.1 and~3.2]{GS96}.
${\rm Hot}^b (\mathbf{M})$ denotes the category of bounded chain complexes in
$\mathbf{M}$ up to homotopy, and one uses the identification of $K_0({\rm Hot}^b (\mathbf{M}))$ with $K_0(\mathbf{M})$
to get a class in~$K_0(\mathbf{M})$.

The cohomology of a motive is defined as follows. For any object $M=(X, p)$ of $\mathbf{M}$
\[
H^i(M):= {\rm Im}\bigl(p_* \colon H^i(X, \mathbb{Q}) \to H^i(X, \mathbb{Q})\bigr) ,
\]
where $p_*$ denotes the action induced on cohomology by $p$,
$p_* (\alpha):= ({\rm pr}_2)_* (p\cdot {\rm pr}_1^*(\alpha))$, where ${\rm pr}_i \colon X\times X \to X$
is the projection to the $i$-th component.
Since $p_*$ is a morphism in ${\rm MHS}$, $H^i(M)$ is an object of ${\rm MHS}$,
and following \cite[Appendix A-3]{P09} we define
\[
\mathfrak{e}_{\mathbf{M}} (M):= \sum_{i \geq 0} (-1)^i \bigl[H^i(M)\bigr] \in K_0({\rm MHS}) .
\]
Notice that $\mathfrak{e}_{\mathbf{M}}$ is additive on the objects of $\mathbf{M}$, therefore
it induces a group homomorphism
\[
\mathfrak{e}_{\mathbf{M}} \colon \ K_0(\mathbf{M}) \to K_0({\rm MHS}).
\]
It is proved in \cite[Appendix A]{P09} that the HG-characteristic lifts to motives, i.e., \[
 \mathfrak{e} = \mathfrak{e}_{\mathbf{M}} \circ \overline{h} .
\]
It follows from this that $\mathfrak{e}([X/G]) = \mathfrak{e}_{\mathbf{M}}\bigl( \overline{h} ([X/G])\bigr)$.

Corollary 5.3 in~\cite{dBN98} implies that there is an isomorphism of weight complexes,
$W(X/G) \cong W(X)^G$, where $W(X)^G = \bigl(W(X), \frac{1}{|G|}\sum_{g \in G}[g]\bigr)$
is the image of $W(X)$ under the projector $\frac{1}{|G|}\sum_{g \in G}[g]$.
Here $W(X)$ has to be considered as an object of the category of functors from $G$ to ${\rm Hot}^b (\mathbf{M})$.

Let us now apply the functor $H^n$ to $W(X/G)$. We obtain a complex of
$\mathbb{Q}$-vector spaces whose $i$-th cohomology group is
denoted $R^iH^n(W(X/G))$, and it coincides with
${\rm gr}_n^W H_{\rm c}^{i+n}(X/G, \mathbb{Q})$ \cite[Section~3.1]{GS96}.
On the other hand,
\begin{align*}
R^iH^n(W(X/G)) &= R^iH^n\bigl(W(X)^G\bigr) = R^i ( H^n (W(X)) )^G \\
&= \bigl( R^i H^n (W(X))\bigr)^G = \bigl( {\rm gr}_n^W H_{\rm c}^{i+n}(X, \mathbb{Q}) \bigr)^G ,
\end{align*}
where we have used the fact that $H^n(X,p)= {\rm Im} (p_* \colon H^n (X) \to H^n(X))$ for any motive $(X, p)$,
by definition, and that taking $G$-invariants is exact for vector spaces over $\mathbb{Q}$.

The claim follows because the Grothendieck group of mixed Hodge structures
is isomorphic to the one of pure Hodge structures (${\rm HS}$) via the homomorphism $K_0({\rm MHS}) \to K_0({\rm HS})$
defined by sending $[(V, W_\bullet, F^\bullet)]$ to $\sum_{i\in \mathbb{Z}} \bigl[{\rm gr}_i^W V\bigr]$ \cite[Corollary~3.9]{PS08}.

\subsection{Application: the abelian case}\label{abelian case}
In this section, we consider the forgetful morphism $\pi \colon \balsm \to \mathcal{M}_{0,n}$
that sends any twisted $G$-cover $(\mathcal{C}\to C, \Sigma, f \colon \mathcal{C} \to \BG)$
of an $n$-pointed curve of genus $0$
to $(C, |\Sigma_1|, \dots , |\Sigma_n|) =: (C, p_1, \dots , p_n)$
(we adopt the notation of Definition~\ref{twiste_Gcover_Npointed_curve}).
As in the previous section $\balsm$ and $\mathcal{M}_{0,n}$ denote coarse moduli spaces.
The morphism $\pi$ is finite and \'{e}tale (see, e.g., \cite[Lemma~5.2.2]{ACV03}), and its fiber
over $(C, p_1, \dots , p_n)$ can be identified with the quotient set
\[
\operatorname{Hom} (\pi_1 (C\setminus \{ p_1, \dots , p_n \} , p_0), G)/G ,
\]
where $p_0 \in C\setminus \{ p_1, \dots , p_n \}$ is a base point and $G$ acts by conjugation
(see, e.g., \cite{BCK, Fulton69}).

Let us fix a geometric basis of $\pi_1 (C\setminus \{ p_1, \dots , p_n \} , p_0)$,
i.e., loops $\gamma_1, \dots , \gamma_n$ based at $p_0$, that go near $p_1, \dots , p_n$
along given paths, turn around the marked points and go back to $p_0$ along the same paths.
Furthermore, $\gamma_1, \dots , \gamma_n$ intersect each other only at $p_0$,
their homotopy classes $[\gamma_1], \dots , [\gamma_n]$ generate the fundamental group
$\pi_1 (C\setminus \{ p_1, \dots , p_n \} , p_0)$ and are subject only to the relation
$[\gamma_1] \cdots [\gamma_n] = 1$.

Using this geometric basis, we can identify the set of morphisms $\operatorname{Hom} (\pi_1 (C\setminus \{ p_1, \dots , p_n \} ,\allowbreak p_0), G)$
with the set ${\rm HV}(G, n)$ of $G$-Hurwitz vectors in $G$ of genus $0$ (we use the same notation as in
\cite[Definition~2.1]{CLP16}),
\[
{\rm HV}(G, n) := \bigl\{ (g_1, \dots , g_n) \in G^n \mid g_1 \cdot \dots \cdot g_n=1 \bigr\} ,
\]
the action by $G$ corresponds to the simultaneous conjugation of $g_1, \dots , g_n$.
The Artin's braid group $Br_n$ is generated by the so-called elementary braid $\sigma_1, \dots , \sigma_{n-1}$
and it acts on ${\rm HV}(G, n)$ as follows:{\samepage
\[
(g_1, \dots , g_n) \cdot \sigma_i = \bigl(g_1, \dots , g_{i-1}, g_i g_{i+1} g_i^{-1}, g_i, g_{i+2}, \dots , g_n\bigr) .
\]
Notice that this action commutes with the $G$-action, hence it induces an action on ${\rm HV}(G, n)/G$.}

One can use this action to describe the connected components of $\balsm$.
In particular, when $G$ is abelian, by \cite[Proposition~2.1]{BCK}, we have that
$\pi \colon \balsm \to \mathcal{M}_{0,n}$ is a trivial bundle, hence $\balsm \cong \mathcal{M}_{0,n} \times {\rm HV}(G, n)$
and the evaluation maps correspond to the projections ${\rm HV}(G, n) \to G$.
Therefore, we obtain an expression of $[\mathcal{B}] \in K_0^{\ms S}({\rm Var}/B)$ in terms of
the classes $[\mathcal{M}_{0,n}]$, for all $n$.
Using the known results about the cohomology of $\mathcal{M}_{0,n}$
(e.g., \cite{Get95,Kee92, Manin95}), one can determine the Betti numbers of $\mathcal{B}_n$,
for every $n$, hence those of $\overline{\mathcal{B}}_n$ using the results of the previous section.

\subsection*{Acknowledgments}
This article is dedicated to Lothar G\"ottsche, in occasion of his 60th birthday, with esteem and admiration.
We would like to thank the anonymous referees for their precious comments and observations
about an earlier version of the article, and the guest editors of the special volume
for the invitation and for their comments.
The second author was partially supported by the national project
2017SSNZAW 005-PE1 ``Moduli Theory and Birational Classification'',
by the research group GNSAGA of INDAM and by FRA of the University of Trieste.

\pdfbookmark[1]{References}{ref}
\LastPageEnding

\end{document}